\renewcommand{\a}{\alpha}
\renewcommand{\b}{\beta}
\newcommand{\g}{\gamma}
\newcommand{\G}{\Gamma}
\renewcommand{\d}{\delta}
\newcommand{\D}{\Delta}
\newcommand{\h}{\chi}
\newcommand{\om}{\omega}
\newcommand{\OM}{\Omega}
\newcommand{\s}{\sigma}
\renewcommand{\t}{\tau}
\renewcommand{\th}{\theta}
\newcommand{\e}{\varepsilon}
\newcommand{\f}{\varphi}
\newcommand{\F}{\Phi}
\newcommand{\x}{\xi}
\newcommand{\y}{\eta}
\newcommand{\p}{\psi}
\newcommand{\C}{{\mathbb C}}
\newcommand{\N}{{\mathbb N}}
\newcommand{\R}{{\mathbb R}}
\newcommand{\RR}{{\mathbb R}^2}
\newcommand{\curl}{{\rm curl}\,}
\newcommand{\diver}{{\rm div}\,}
\newcommand{\inte}{{\rm int}\,}
\newcommand{\pd}{\partial}
\newcommand{\dt}{\partial_t}
\newcommand{\supp}{\operatorname{supp\,}}
\newcommand{\loc}{\operatorname{{loc}}}
\newtheorem{theorem}{Theorem}[section]
\newtheorem{proposition}[theorem]{Proposition}
\newtheorem{lemma}[theorem]{Lemma}
\newtheorem{corollary}[theorem]{Corollary}
\newtheorem{definition}[theorem]{Definition}
\newtheorem{assumption}[theorem]{Assumption}
\theoremstyle{remark}
\newtheorem{remark}[theorem]{Remark}
\newtheorem{example}[theorem]{Example}
\numberwithin{equation}{section}
\newcommand{\dss}{{\mathbf{ds}}}
\newcommand{\na}{\nabla}
\begin{document}


\title[Small curve]{Two Dimensional Incompressible Ideal Flow Around a Small Curve}
\author{C. Lacave}

\address[C. Lacave]{Universit\'e Paris-Diderot (Paris 7)\\
Institut de Math\'ematiques de Jussieu\\
UMR 7586 - CNRS\\
175 rue du Chevaleret\\
75013 Paris\\
France} \email{lacave@math.jussieu.fr}

\date{\today}

\begin{abstract}
We study the asymptotic behavior of solutions of the two dimensional incompressible Euler equations in the exterior of a curve when the curve shrinks to a point. This work links two previous results:  [Iftimie, Lopes Filho and Nussenzveig Lopes, Two Dimensional Incompressible Ideal Flow Around a Small Obstacle, Comm. PDE, {\bf 28} (2003), 349-379] and [Lacave, Two Dimensional Incompressible Ideal Flow Around a Thin Obstacle Tending to a Curve, Ann. IHP, Anl \textbf{26} (2009), 1121-1148]. The second goal of this work is to complete the previous article, in defining the way the obstacles shrink to a curve. In particular, we give geometric properties  for domain convergences in order that the limit flow be a solution of Euler equations.
\end{abstract}

\maketitle

\section{Introduction}

The purpose of this work is to study the influence of a material curve on the behavior of two-dimensional ideal flows when the size of the curve tends to zero. The study of the fluid flows in a singularly perturbed domains was initiated by Iftimie, Lopes Filho and Nussenzveig Lopes in \cite{ift_lop_euler}, in the case of a smooth obstacle which shrinks to a point. For some initial data, they obtain a blow-up of the limit velocity like $1/|x|$ centered at the point where the obstacle disappears. For some other initial data, they prove that there is no blow-up. Six years later, the case of thin obstacles shrinking to a curve was treated in \cite{lac_euler}. It was shown that the limit velocity always blows up at the end-points of the curve like $1/\sqrt{|x|}$. In light of this two works a natural question arises: what happens in the case of small curves ? Our result can be stated as following: as the end-points get closer and closer, for some initial data, the two blow-ups like $1/\sqrt{|x|}$ combine in order to give $1/|x|$, and for other initial data, the blow-ups compensate each other and disappear.

More precisely, we fix both an initial vorticity $\om_0$, smooth and compactly supported outside the obstacle $\OM$, and the circulation $\g$ of the initial velocity around the obstacle.
We assume that the obstacle $\OM$ is a bounded, connected, simply connected subset of the plane. Let us define the exterior domain $\Pi := \R^2 \setminus \overline{\OM}$. Then, the vorticity and the circulation uniquely determine a vector field $u_0$ tangent to the obstacle such that:
\[ \diver u_0 = 0, \ \curl u_0 = \om_0, \ \lim_{|x|\to \infty} u_0(x) = 0,\ \oint_{\pd \OM} u_0\cdot \dss = \g .\]

When the obstacle $\OM$ is smooth and open, it is proved by Kikuchi \cite{kiku} that there exists a unique global strong solution to the Euler equations in $\Pi$. If $\OM$ is a smooth curve $\G$ (with two end-points), we have to define what is a weak-solution.

\begin{definition}\label{sol-curve}
Let $\om_0\in L^1\cap L^\infty(\RR)$ and $\g\in\R$. We say that $(u,\om)$ is a global weak solution of the Euler equations outside the curve $\G$ with initial condition $(\om_0,\g)$ if
\begin{equation*}
\om\in L^\infty(\R^+;L^1\cap L^\infty(\RR))
\end{equation*}
and if we have in the sense of distributions
\begin{equation}\label{transport}
\begin{cases}
 \dt \om +\diver(u \om)=0, \\
 \om(0)=\om_0,
\end{cases}
\end{equation}
where $u$ verifies
\begin{equation*}
\left\lbrace\begin{aligned}
\diver u &=0 &\text{ in } {\Pi} \\
\curl u &=\om &\text{ in } {\Pi} \\
u\cdot\hat{n}&=0 &\text{ on } \G \\
\oint_{\G} u \cdot \dss& = \g & \text{ for }t\in[0,\infty)\\
\lim_{|x|\to\infty}|u|&=0.
\end{aligned}\right.
\end{equation*}
\end{definition}
In this definition, $\Pi:= \R^2\setminus \G$, and \eqref{transport} means that we have 
\[ \int_0^\infty\int_{\R^2}\f_t\om dxdt +\int_0^\infty \int_{\R^2}\na\f.u\om dxdt+\int_{\R^2}\f(0,x)\om_0(x)dx=0,\]
for any test function $\f\in C^\infty_c([0,\infty)\times\R^2)$.

In \cite{lac_euler}, we prove the existence of a global weak solution in the sense of the previous definition. The idea of the previous paper is the following: for $\G$ given, we manage to construct a sequence of smooth obstacles $\OM_n$ (thanks to biholomorphisms), which shrink to the curve. Next, we consider the strong solution $(u^n,\om^n)$ in smooth domain $\Pi_n := \R^2\setminus \overline{\OM_n}$, and we pass to the limit. The details of this proof will be presented in Subsection \ref{thicken}.

However, we have constructed a special family of obstacles. The first goal here is to generalized \cite{lac_euler} in the case of a geometrical convergence of $\OM_n$ to $\G$.

\begin{theorem}\label{main 2}
Let $\{\OM_n\}$ be a sequence of smooth, open obstacles containing $\G$. If $\OM_n \to \G$ in the sense of Theorem \ref{theo-assump}, then there exists a subsequence $n=n_k\to 0$ such that
\begin{itemize}
\item[(a)] $\F^n u^n \to u$ strongly in $L^2_{\loc}(\R^+\times\R^2)$;
\item[(b)] $\F^n \om^n \to \om$ weak-$*$ in $L^\infty(\R^+;L^4(\R^2))$;
\item[(c)] $(u,\om)$ is a global weak solution of the Euler equations around the curve $\G$.
\end{itemize}
\end{theorem}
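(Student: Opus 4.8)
\emph{Strategy.} The proof follows the compactness scheme of \cite{ift_lop_euler} and \cite{lac_euler}: derive bounds uniform in $n$, extract limits, and pass to the limit in the equations; the point of the generalization is that every constant must now be controlled by the geometric data of Theorem \ref{theo-assump} alone, and not by an explicit construction of the obstacles.

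\emph{Step 1: a priori estimates.} Since $\OM_n$ is open and smooth, $(u^n,\om^n)$ is Kikuchi's global strong solution in $\Pi_n$, and $\om^n$ is transported by the divergence-free field $u^n$, so $\|\om^n(t)\|_{L^p(\R^2)}=\|\om_0\|_{L^p}$ for every $p\in[1,\infty]$ and $t\ge0$. Hence $\F^n\om^n$ is bounded in $L^\infty(\R^+;L^1\cap L^\infty(\RR))$, in particular in $L^\infty(\R^+;L^4(\R^2))$, and, as in \cite{lac_euler}, $\supp\om^n(t)$ stays in a fixed compact set on each $[0,T]$. For the velocity I would use the explicit Biot--Savart law in $\Pi_n$ obtained by transporting the exterior-of-the-disc kernel (with its image charge) through the Riemann map $T_n:\Pi_n\to\{|z|>1\}$, writing $u^n(t,x)=\int_{\Pi_n}G_n(x,y)\,\om^n(t,y)\,dy+\g\,H_n(x)$ with $G_n$, $H_n$ the associated Green-type kernels. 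Both factors are pointwise controlled by $(DT_n)^{\top}$ times a bounded function of $T_n$ (bounded in terms of $\|\om^n(t)\|_{L^1\cap L^\infty}$, $|\g|$ and the conserved constant $\g+\int\om_0$). The geometric hypotheses of Theorem \ref{theo-assump} are designed precisely to give a bound on $\|DT_n\|_{L^p_{\loc}}$, uniform in $n$, for every $p<4$ (this encodes the square-root behaviour of $T_n$ near the endpoints of $\G$). Consequently $\F^n u^n$ is bounded in $L^\infty(\R^+;L^p_{\loc}(\R^2))$ for some $p>2$, uniformly in $n$; in particular it is bounded and equi-integrable in $L^2_{\loc}$, with no concentration of $L^2$-mass near $\G$.

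\emph{Step 2: convergence of the geometry and of the velocities.} From $\OM_n\to\G$ in the sense of Theorem \ref{theo-assump} I would deduce Carath\'eodory kernel convergence $\Pi_n\to\Pi:=\R^2\setminus\G$, hence locally uniform convergence of $T_n$, $T_n^{-1}$ and $DT_n$ away from $\G$, together with convergence of $DT_n$ in $L^p_{\loc}$ for $p<4$. After extracting a subsequence, $\F^n\om^n\rightharpoonup\om$ weak-$*$ in $L^\infty(\R^+;L^4)$ (item (b)); passing to the limit in the Biot--Savart formula then gives $\F^n u^n\to u$ almost everywhere, where $u$ is defined by the analogous formula attached to the Riemann map of $\Pi$, and the uniform $L^p_{\loc}$ bound with $p>2$ from Step 1 upgrades this to strong convergence in $L^2_{\loc}(\R^+\times\R^2)$ (item (a)). By construction $u$ satisfies $\diver u=0$ and $\curl u=\om$ in $\Pi$, $u\cdot\hat n=0$ on $\G$, $\oint_\G u\cdot\dss=\g$ and $|u|\to0$ at infinity; and $\om\in L^\infty(\R^+;L^1\cap L^\infty)$ by lower semicontinuity of norms under weak limits together with the uniform support bound.

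\emph{Step 3: passage to the limit in the transport equation, and the main obstacle.} It remains to pass to the limit in the weak formulation of \eqref{transport} for $\om^n$ (which holds on all of $\R^2$ since $\supp\om^n\subset\Pi_n$): the linear terms pass by the weak-$*$ convergence of $\F^n\om^n$ and the fact that $\om^n(0)\equiv\om_0$, while the nonlinear term passes upon splitting $\int\na\f\cdot(u^n-u)\om^n+\int\na\f\cdot u(\om^n-\om)$, the first tending to $0$ by strong $L^2_{\loc}$ convergence of $\F^n u^n$ against the $L^2_{\loc}$-bounded $\om^n$, the second by $\na\f\cdot u\in L^2_{\loc}$ and weak $L^2_{\loc}$ convergence of $\F^n\om^n$. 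This gives item (c). I expect the main difficulty to lie entirely in Steps 1--2, namely in extracting from the abstract geometric convergence of Theorem \ref{theo-assump} the quantitative control on the Riemann maps $T_n$ (a uniform $L^p_{\loc}$ bound with $p>2$ on $DT_n$ and its convergence, equivalently the absence of $L^2$-concentration of $u^n$ near the endpoints of $\G$): in \cite{lac_euler} this was read off from an explicit family of obstacles, whereas here it must be deduced from the geometric assumptions alone. Everything else is the by-now-standard weak-compactness machinery for two-dimensional Euler flows in exterior domains.
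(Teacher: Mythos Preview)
Your proposal is correct and follows essentially the same route as the paper. The paper's proof of Theorem \ref{main 2} is in fact a single sentence: once Theorem \ref{theo-assump} is established (this is the content of Subsection \ref{assump-proof}, and is precisely the ``main difficulty'' you identify in Steps 1--2, namely deducing from the geometric hypotheses the uniform control on $T_n$, $DT_n$, $DT_n^{-1}$ encoded in Assumption \ref{assump}), the result follows by invoking \cite{lac_euler} as a black box, since that paper carries out exactly the compactness scheme you sketch in Steps 1--3 under Assumption \ref{assump}. Your proposal thus unpacks both ingredients rather than citing the second one; the only organizational difference is that the paper isolates the biholomorphism convergence as a stand-alone theorem and then appeals to the already-published limit argument, whereas you weave them together.
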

In this result, $\F^n$ is a cut-off function of an $\frac1n$-neighborhood of $\OM_n$. In particular, we will remark that this sense of convergence holds for smooth domain, i.e. if $\OM_n\to \OM$, where $\OM$ is smooth, then the Euler solutions on $\OM_n$ (respectively on the exterior domain $\Pi_n$) tends to the Euler solution on $\OM$ (respectively on $\Pi$). Another consequence of proving a geometrical theorem (Theorem \ref{theo-assump}) is the extension of \cite{lac_NS}, which corresponds to the previous theorem in the viscous case (with Navier-Stokes equations instead to Euler equations).

\bigskip

The second goal of this article is to study the behavior of the weak solution when the curve shrinks to a point.

In \cite{ift_lop_euler}, the authors fix a smooth obstacle $\OM_0$,  containing the origin, and choose $\OM_\e := \e \OM_0$. For this homothetic convergence, they prove the following theorem.
\begin{theorem} \label{ift_main}
There exists a subsequence $\e=\e_k\to 0$ such that
\begin{itemize}
\item[(a)] $\F^\e u^\e \to u$ strongly in $L^1_{\loc}(\R^+\times\R^2)$;
\item[(b)] $\F^\e {\om}^\e \to {\om}$ weak $*$ in $L^\infty(\R^+\times\R^2)$;
\item[(c)] the limit pair $(u,{\om})$ verify in the sense of  distributions: 
\begin{equation*}
\left\lbrace\begin{aligned}
&\pd_t {\om}+u\cdot \na {\om}=0 &\text{ in } (0,\infty) \times \R^2  \\
&\diver u =0 &\text{ in } (0,\infty) \times \R^2  \\
&\curl u ={\om} + \g \d_0 &\text{ in } (0,\infty) \times \R^2  \\
&\lim_{|x|\to\infty}|u|=0 & \text{ for }t\in[0,\infty)\\
&{\om}(0,x)={\om}_0(x) &\text{ in } \R^2
\end{aligned}\right.
\end{equation*}
with $\d_0$ the Dirac function at $0$.
\end{itemize}
\end{theorem}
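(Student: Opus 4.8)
Since Theorem~\ref{ift_main} is the result of \cite{ift_lop_euler}, the plan is to reproduce its proof, which rests on three ingredients: uniform a priori bounds, an explicit formula for $u^\e$ through a conformal map together with its behaviour under the homothety $x\mapsto x/\e$, and a compactness argument for passing to the limit in the vorticity equation. As $\OM_0$ contains the origin while $\om_0$ is supported away from it, for small $\e$ one has $\supp\om_0\subset\Pi_\e$, so the transport equation on $\Pi_\e$ conserves every $L^p$ norm, $\|\om^\e(t)\|_{L^p(\R^2)}=\|\om_0\|_{L^p}$ for all $p\in[1,\infty]$ and $t\ge0$; moreover, the flow of $u^\e$ being tangent to $\pd\Pi_\e$ and its speed controlled by the velocity bound below, $\supp\om^\e(t)$ stays in a fixed compact subset of $\R^2\setminus\{0\}$ on each finite time interval, uniformly in $\e$. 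Writing $T^\e\colon\Pi_\e\to\{|z|>1\}$ for the biholomorphism normalised by $T^\e(\infty)=\infty$, $(T^\e)'(\infty)>0$, I would use the decomposition
\[
u^\e=K_{\Pi_\e}[\om^\e]+(\g+m)\,H^\e,\qquad m:=\int_{\R^2}\om_0,
\]
where $H^\e:=\tfrac1{2\pi}\na^\perp\log|T^\e|$ is the harmonic tangent field of unit circulation, $K_{\Pi_\e}[\om^\e](x):=\int\na_x^\perp G_{\Pi_\e}(x,y)\,\om^\e(y)\,dy$ with $G_{\Pi_\e}$ the Dirichlet Green function of $\Pi_\e$, and the coefficient $\g+m$ is forced by the fact that $K_{\Pi_\e}[\om^\e]$ carries circulation $-m$ around the obstacle. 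This decomposition shows that $\F^\e u^\e$ is bounded in $L^\infty(\R^+;L^1_{\loc}(\R^2))$, and in $L^\infty(\R^+;L^2_{\loc})$ away from the origin.

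The crucial point is that $T^\e(x)=T^1(x/\e)$; combined with the Laurent expansion $T^1(w)=\b w+c+O(1/w)$ as $w\to\infty$, $\b:=(T^1)'(\infty)>0$, this yields, for each fixed $x\neq0$,
\[
H^\e(x)\ \longrightarrow\ \frac1{2\pi}\frac{x^\perp}{|x|^2},\qquad \na_x^\perp G_{\Pi_\e}(x,y)\ \longrightarrow\ \frac1{2\pi}\frac{(x-y)^\perp}{|x-y|^2}-\frac1{2\pi}\frac{x^\perp}{|x|^2},
\]
with errors of order $O(\e/|x|^2)$. Hence $H^\e\to\tfrac1{2\pi}x^\perp/|x|^2$ strongly in $L^1_{\loc}(\R^2)$ and, once $\om^\e\to\om$ is known, $K_{\Pi_\e}[\om^\e]\to K_{\R^2}[\om]-\tfrac{m}{2\pi}x^\perp/|x|^2$. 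Summing the two pieces, the spurious term $-\tfrac{m}{2\pi}x^\perp/|x|^2$ is exactly compensated by the contribution of $m$ to $(\g+m)H^\e$, and one is left with
\[
u=K_{\R^2}[\om]+\frac{\g}{2\pi}\frac{x^\perp}{|x|^2}=K_{\R^2}\!\left[\om+\g\d_0\right],
\]
that is $\diver u=0$, $\curl u=\om+\g\d_0$, and $|u|\to0$ at infinity.

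For the compactness step, extract a subsequence so that $\om^\e\rightharpoonup\om$ weak-$*$ in $L^\infty(\R^+\times\R^2)$ (with $\om\in L^\infty(\R^+;L^1\cap L^\infty)$ by lower semicontinuity and the uniform compact support, and $\F^\e\om^\e=\om^\e$ for small $\e$) and $\F^\e u^\e\rightharpoonup u$. To upgrade the velocity convergence to strong convergence in $L^1_{\loc}(\R^+\times\R^2)$, I would treat a fixed small ball $B$ around the origin and its complement separately: outside $B$, which for small $\e$ does not meet $\OM_\e$, one has $\diver u^\e=0$ and $\curl u^\e=\om^\e$ bounded in $L^p$, so $u^\e$ is bounded in $W^{1,p}_{\loc}$ and, via Aubin--Lions using the transport equation to control $\pd_t\om^\e$, precompact in $L^p_{\loc}$, hence convergent to $u$ there along the subsequence; inside $B$, using the kernel convergences of the previous paragraph, that $\supp\om^\e$ stays away from $0$, and that $\F^\e=1$ on $B$ outside an $O(\e)$-neighbourhood of $\OM_\e$ (whose contribution to the $L^1(B)$ norm is $o(1)$), one checks directly that $\F^\e u^\e\to u$ in $L^1(B)$. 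Finally, since $\om^\e$ vanishes in a neighbourhood of $\OM_\e$, the pair solves $\pd_t\om^\e+\diver(u^\e\om^\e)=0$ in $\Dc'((0,\infty)\times\R^2)$ with $\om^\e(0)=\om_0$; passing to the limit---the product $u^\e\om^\e\to u\om$ in $L^1_{\loc}$ as the product of a strongly $L^1_{\loc}$-convergent factor and a weak-$*$-$L^\infty$-convergent one---and inserting the identification of $u$ above yields (a), (b) and (c).

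The main obstacle is not the formal limit but the uniform control near the collapsing obstacle: one must verify that the singular parts of $K_{\Pi_\e}$ and of $H^\e$ remain uniformly integrable on neighbourhoods of the origin shrinking with $\e$, and that their singular contributions combine to exactly $\g\d_0$---neither an additional point force nor any loss of circulation. This is precisely where the scaling $T^\e(x)=T^1(x/\e)$ and the behaviour of $T^1$ at infinity do the essential work, and it is also the reason the limit velocity is only $L^1_{\loc}$ near $0$ (the $1/|x|$ tail carried by $\g+m$), so that the convergence in (a) cannot be improved to $L^2_{\loc}$ there.
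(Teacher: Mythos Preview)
Your decomposition $u^\e=K_{\Pi_\e}[\om^\e]+(\g+m)H^\e$, the identification of the pointwise kernel limits via $T^\e(x)=T^1(x/\e)$, and the resulting algebra showing that the limit velocity satisfies $\curl u=\om+\g\d_0$ are all correct and are indeed the heart of the argument in \cite{ift_lop_euler}.

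The gap is in your compactness step, and it stems from one unjustified claim: that $\supp\om^\e(t)$ stays in a \emph{fixed} compact subset of $\R^2\setminus\{0\}$ on $[0,T]$, uniformly in $\e$. You offer no proof of this, and it is not clear it holds. For each fixed $\e$ the support lies in $\Pi_\e$, hence at distance $\gtrsim\e$ from the origin, but a uniform positive lower bound $\d>0$ independent of $\e$ would require controlling the radial component of $u^\e$ on the support; since $\|v^\e\|_{L^\infty}\le C$ and the radial component of $H^\e$ is only $O(\e/|x|^2)$, the best one gets is $r(t)\ge\rho_0-Ct$, which becomes vacuous once $t>\rho_0/C$. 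Several of your subsequent assertions hinge on this claim: that $\F^\e\om^\e=\om^\e$ for small $\e$, that $\om^\e$ vanishes near $\OM_\e$, and especially the direct ``inside $B$'' kernel computation, which needs $y\in\supp\om^\e$ bounded away from $x\in B$ to pass the weak-$*$ limit through the integral. Without this separation, the kernel is singular and weak-$*$ convergence of $\om^\e$ alone does not yield strong $L^1(B)$ convergence of $K_{\Pi_\e}[\om^\e]$.

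The proof in \cite{ift_lop_euler} avoids this issue entirely by a different route. One does \emph{not} split into ``inside $B$'' and ``outside $B$''; instead one works globally with the cutoff $\F^\e$ and applies the Div--Curl Lemma (Lemma~\ref{div-curl} in this paper) to $F^\e=G^\e=\F^\e v^\e$. The point is that $\diver(\F^\e v^\e)=v^\e\cdot\na\F^\e$ and $\curl(\F^\e v^\e)=\F^\e\om^\e+v^\e\cdot\na^\perp\F^\e$; since in the smooth-obstacle case $\|v^\e\|_{L^\infty}\le C$ and $\|\na\F^\e\|_{L^2}\le C$, both are bounded in $L^\infty(L^2)$ and, together with a time-derivative bound on $(\F^\e v^\e)_t$ analogous to Corollary~\ref{v_t}, Aubin--Lions gives precompactness in $C([0,T];H^{-1}_{\loc})$. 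The Div--Curl Lemma then yields $|\F^\e v^\e|^2\rightharpoonup|v|^2$, hence strong $L^2_{\loc}$ convergence of $\F^\e v^\e$, and $\F^\e H^\e\to H$ in $L^1_{\loc}$ is handled separately as in Lemma~\ref{H-limit}. This machinery is precisely what absorbs the lack of information about where $\om^\e$ sits relative to the obstacle.
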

In this result, $\F^\e$ is a cut-off function of an $\e$-neighborhood of $\OM_\e$.  Therefore, they obtain at the limit the Euler equations in the full plane, where a Dirac mass at the origin appears. This additional term is a reminiscense of the circulation $\g$ of the initial velocities around the obstacles, and we note that this term does not appear if  $\g=0$. Actually, we can write the velocity as a sum of a smooth vector field and $\g \dfrac{x^\perp}{2\pi |x|^2}$. Additionally, \cite{lac_miot} proves that there exists at most one global solution of the previous limit system. Therefore, we can state that Theorem \ref{ift_main} holds true for all sequences $\e_k\to 0$, without extracting subsequences.

In the exterior of the curve, we will note in Remark \ref{rem : prop-curve} that the velocity, for any weak solution, is continuous up to the curve, with different values on each side of $\G$, and blows up at the endpoints of the curve as the inverse of the square root of the distance. As it was said at the beginning of this introduction, we remark in this two results that the velocity blows up like $1/|x|$ in the case of a point, and like $1/\sqrt{|x|}$ near the end-points in the case of the curve. The problem here is to check that we find a result similar to Theorem \ref{ift_main} when a curve shrinks to a point. 

We fix a smooth open Jordan arc $\G$, and we set $\G_\e := \e \G$. Then there exists at least one weak solution of Euler equation outside the curve $\G_\e$. Our goal is to prove the following theorem.
\begin{theorem} \label{main}
Let $(u^\e,\om^\e)$ be a weak solution for Euler equation outside $\G_\e$. Then, for all sequences $\e=\e_k\to 0$, we have
\begin{itemize}
\item[(a)] $u^\e \to u$ strongly in $L^1_{\loc}(\R^+ \times \R^2)$;
\item[(b)] ${\om}^\e \to {\om}$ weak $*$ in $L^\infty(\R^+ \times \R^2)$;
\item[(c)] the limit pair $(u,{\om})$ is the unique solution in the sense of  distributions of:
\begin{equation*}
\left\lbrace\begin{aligned}
&\pd_t {\om}+u\cdot \na {\om}=0 &\text{ in } (0,\infty) \times \R^2  \\
&\diver u =0 &\text{ in } (0,\infty) \times \R^2  \\
&\curl u ={\om} + \g \d_0 &\text{ in } (0,\infty) \times \R^2  \\
&\lim_{|x|\to\infty}|u|=0 & \text{ for }t\in[0,\infty)\\
&{\om}(0,x)={\om}_0(x) &\text{ in } \R^2.
\end{aligned}\right.
\end{equation*}
\end{itemize}
\end{theorem}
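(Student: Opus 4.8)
The plan is to transpose to the exterior of an arc the argument of \cite{ift_lop_euler}, written there for a smooth obstacle shrinking homothetically; the new ingredient is the precise form of the Biot--Savart law around a curve, already used in \cite{lac_euler}. Fix a conformal map $T\colon\R^2\setminus\G\to\{|z|>1\}$; by homogeneity $T^\e:=T(\cdot/\e)$ is a conformal map $\Pi_\e:=\R^2\setminus\G_\e\to\{|z|>1\}$ with $(T^\e)'(x)=\tfrac1\e T'(x/\e)$. Recall from \cite{lac_euler} that $T'$ tends to a nonzero constant $\b$ at infinity and that $|T'|$ blows up like the inverse square root of the distance to the two endpoints of $\G$, with no other singularity. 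Decompose any weak solution as $u^\e=\g H^\e+v^\e$, where $H^\e=\tfrac1{2\pi}\na^\perp\log|T^\e|$ is the unique divergence-free, harmonic field outside $\G_\e$, tangent to $\G_\e$, vanishing at infinity, of unit circulation, and $v^\e(t,\cdot)=\na^\perp\!\big[\int_{\Pi_\e}\Gc_\e(\cdot,y)\,\om^\e(t,y)\,dy\big]$ is generated by $\om^\e$ with zero circulation around $\G_\e$, $\Gc_\e$ being the hydrodynamic Green function of $\Pi_\e$ --- both explicit in terms of $T^\e$.

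Since $\om^\e$ is transported by the divergence-free field $u^\e$, one has $\|\om^\e(t)\|_{L^p(\R^2)}\le\|\om_0\|_{L^p}$ for all $p\in[1,\infty]$ and $t\ge0$; hence $\om^\e$ is bounded in $L^\infty(\R^+;L^1\cap L^\infty(\R^2))$ and, up to a subsequence, $\om^\e\rightharpoonup\om$ weak-$*$. From the explicit formulas, $|H^\e(x)|$ and $|v^\e(t,x)|$ --- the latter with $\|\om^\e(t)\|_{L^1\cap L^\infty}$ in front --- are bounded by $C|x|^{-1}$ outside a fixed tube around $\G_\e$, plus a remainder supported in such a tube whose $L^q_{\loc}(\R^2)$-norm tends to $0$ as $\e\to0$ for every $q<2$ (the worst, inverse-square-root, singularities of $T^\e$ at the endpoints of $\G_\e$ being integrable to any power $q<4$ on a set whose measure tends to $0$). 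Since moreover $|x|^{-1}\in L^q_{\loc}$ for $q<2$, the field $u^\e=\g H^\e+v^\e$ is bounded in $L^\infty(\R^+;L^q_{\loc}(\R^2))$ for every $q<2$, uniformly in $\e$; up to a further subsequence, $u^\e\rightharpoonup u$ there.

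Next I identify the limit of the linear objects. For $x\ne0$ one has $x/\e\to\infty$, hence $T'(x/\e)\to\b$ and $T^\e(x)=T(x/\e)\sim\b x/\e$; substituting into $H^\e=\tfrac1{2\pi}\na^\perp\log|T^\e|$, all powers of $\e$ and all occurrences of $\b$ cancel and $H^\e(x)\to\tfrac{x^\perp}{2\pi|x|^2}$ for every $x\ne0$. Combined with the uniform bound above and the collapse of the singular set $\G_\e$ onto $\{0\}$, this upgrades to $H^\e\to\tfrac{x^\perp}{2\pi|x|^2}$ strongly in $L^q_{\loc}(\R^2)$, $q<2$. A similar but more delicate computation --- in which the boundary and the center images appearing in the Green function of $\{|z|>1\}$ cancel in the limit --- shows that the Biot--Savart kernel $\na_x^\perp\Gc_\e(x,y)$ of $\Pi_\e$ tends to the full-plane kernel $\tfrac{(x-y)^\perp}{2\pi|x-y|^2}$ for $x\ne y$, $x,y\ne0$; with $\om^\e\rightharpoonup\om$ this identifies the weak limit of $v^\e$ as the full-plane Biot--Savart field $v$ of $\om$. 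Passing to the limit in $\diver u^\e=0$, $\curl u^\e=\om^\e$ in $\Pi_\e$ and in the decay at infinity then gives $\diver u=0$, $\curl u=\om+\g\d_0$ and $|u|\to0$ on $\R^2$, the Dirac mass being exactly $\g\,\curl\tfrac{x^\perp}{2\pi|x|^2}$ --- this is where the two $1/\sqrt{\cdot}$ blow-ups at the endpoints of $\G_\e$ merge into a single $1/|x|$ singularity (and disappear when $\g=0$).

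The remaining, and \emph{main}, difficulty is to pass to the limit in the nonlinear term $\int_0^\infty\!\int_{\R^2}\na\f\cdot u^\e\om^\e\,dx\,dt$ for $\f\in C^\infty_c([0,\infty)\times\R^2)$. Writing $u^\e\om^\e-u\om=(u^\e-u)\om^\e+u(\om^\e-\om)$, the last term vanishes since $\na\f\cdot u\in L^1$ and $\om^\e\rightharpoonup\om$ weak-$*$ in $L^\infty$, so it suffices to prove that $u^\e\to u$ \emph{strongly} in $L^q_{\loc}(\R^+\times\R^2)$ for some $q\in(1,2)$ (then $(u^\e-u)\om^\e\to0$ in $L^1_{\loc}$ by H\"older, $\om^\e$ being bounded in $L^{q'}$). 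For $\g H^\e$ this is the previous paragraph; for $v^\e$ it is a compactness statement: away from any ball $\overline{B(0,\d)}$ the curve $\G_\e$ is irrelevant for small $\e$, the operators $\om^\e\mapsto v^\e$ are uniformly regularizing (Calder\'on--Zygmund bounds for $\na v^\e$) and converge to the full-plane Biot--Savart operator, while the equation $\dt\om^\e=-\diver(u^\e\om^\e)$ bounds $\dt\om^\e$ in $L^\infty(\R^+;W^{-1,q}_{\loc})$; an Aubin--Lions argument then yields $v^\e\to v$ strongly in $L^q_{\loc}(\R^+\times(\R^2\setminus\overline{B(0,\d)}))$ for each $\d>0$, and the contributions of the shrinking ball $B(0,\d)$ and of a tube around $\G_\e$ are made arbitrarily small, uniformly in $t$ and $\e$, from the uniform $L^q_{\loc}$ ($q<4$) bound on $v^\e$ near $0$ together with $|B(0,\d)|\to0$ and the vanishing of the tube. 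The delicate points are (i) the bookkeeping of the amplified endpoint singularities of $T^\e$, supported on a set shrinking to a point, and (ii) the verification that no vorticity mass concentrates at $0$ beyond the prescribed $\g\d_0$ --- which holds precisely because $v^\e$, carrying zero circulation around $\G_\e$, stays bounded (not merely tight) in $L^q_{\loc}$, $q<4$, near the collapsing curve. Granting this, $(u,\om)$ is a distributional solution of the limit system with $\om(0)=\om_0$; since that system has at most one solution by \cite{lac_miot}, the limit is independent of the extracted subsequence, the convergence holds along every sequence $\e_k\to0$, and (a), (b), (c) follow.
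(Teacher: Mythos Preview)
Your overall architecture --- decompose $u^\e=\g H^\e+v^\e$, prove $H^\e\to x^\perp/(2\pi|x|^2)$ in $L^q_{\loc}$ for $q<2$, obtain strong compactness for $v^\e$, pass to the limit in the weak formulation, then invoke the uniqueness result of \cite{lac_miot} to recover the full sequence --- is exactly the paper's. The treatment of $H^\e$ and of the nonlinear term $u^\e\om^\e$ is also essentially the same.

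There are, however, two genuine gaps in your compactness step. First, you assert $\|\om^\e(t)\|_{L^p}\le\|\om_0\|_{L^p}$ as if it were automatic from transport; but $(u^\e,\om^\e)$ is only a \emph{weak} solution in the sense of Definition~\ref{sol-curve}, and the conservation laws (together with the uniform compact support of $\om^\e$, which you also need) require the DiPerna--Lions renormalization machinery. For that one must first show $u^\e\in L^\infty(\R^+;W^{1,1}_{\loc}(\R^2))$, which in turn forces you to control $D^2T$ near the endpoints of $\G$ --- a nontrivial extension of Proposition~\ref{2.2} that the paper proves in the Annexe. Second, your Aubin--Lions argument for $v^\e$ is incomplete: you bound $\dt\om^\e$ in $W^{-1,q}_{\loc}$, but Aubin--Lions on $v^\e$ needs a bound on $\dt v^\e$ itself in some negative Sobolev space. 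Transferring the time estimate from $\om^\e$ to $v^\e=K_\e[\om^\e]+mH_\e$ through an $\e$-dependent Biot--Savart operator is precisely the delicate point; the paper handles it via a duality argument on the stream function (the proposition preceding Corollary~\ref{v_t}), obtaining $(\F^\e v^\e)_t$ bounded in $H^{-3}_{\loc}$.

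The paper's route to strong compactness also differs from yours: rather than Calder\'on--Zygmund bounds on $\na v^\e$ away from the origin, it introduces a cutoff $\F^\e=\F(|T_\e(x)|)$ and applies the parametrized Div--Curl Lemma to $F^\e=G^\e=\F^\e v^\e$. This only requires that $\diver(\F^\e v^\e)=v^\e\cdot\na\F^\e$ and $\curl(\F^\e v^\e)=\F^\e\om^\e+v^\e\cdot\na^\perp\F^\e$ be precompact in $C([0,T];H^{-1}_{\loc})$, which follows from the $L^{3/2}$ bounds~\eqref{v-phi-1}--\eqref{v-phi-2} together with the $H^{-3}$ time estimate. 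The cutoff is essential because $\curl u^\e=\om^\e+g^\e_{\om^\e}\d_{\G_\e}$ in $\R^2$, so the Dirac mass on the curve has to be removed; your localization ``away from $B(0,\d)$'' serves the same purpose, but the Div--Curl route avoids having to justify uniform Calder\'on--Zygmund bounds for an $\e$-dependent operator and makes the whole argument cleaner. Your sketch could likely be completed along the lines you indicate, but as written it skips exactly the two places where the work lies.
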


Although Theorems \ref{ift_main} and \ref{main} appear to be similar, the estimations and arguments are different. In all these works, the main tool is the explicit formula of the Biot-Savart law (law giving the velocity in terms of vorticity), thanks to conformal mappings. In \cite{ift_lop_euler}, the authors use the change of variables $y=x/\e$, in order to work in a fixed domain. Then they obtain $L^\infty$ estimates for a part of the velocity, thanks to the smoothness of the obstacle, and they pass to the limit using a Div-Curl Lemma. In \cite{lac_euler}, the change of variables does not hold and we work on convergence of biholomorphisms when the domains change. Next, we take advantage of this convergence to pass directly to the limit in the Biot-Savart law, thanks to the dominated convergence theorem. In our case of the small curve, we lost the convergence of biholomorphism and \cite{lac_euler} cannot be applied directly. The curve being a non-smooth domain, because of the end-points, we cannot either apply directly the result from \cite{ift_lop_euler}. Indeed, we will see that we only have $L^p$ estimates of the velocity for $p<4$ instead of $L^\infty$. Actually, we will improve some estimates and we will manage to pass to the limit with the Div-Curl Lemma.

The remainder of this work is organized in three sections. In Section \ref{sect : 2}, we recall some results on conformal mapping and Biot-Savart law. We show that \cite{lac_euler} gives us the existence of at least one global weak solution $(u^\e,\om^\e)$ for the Euler equations outside the curve $\G_\e$ (in the sense of Definition \ref{sol-curve}). We take advantage of this part to prove the convergence of biholomorphisms when an obstacle shrinks  to a curve, completing \cite{lac_euler,lac_NS}. Theorem \ref{main 2} will be a consequence of this section. There will be a general remark concerning the convergence of Euler solutions when the domain converges. In Section \ref{sect : 3}, we establish {\it a priori} estimates for the vorticity and the velocity, in order to pass to the limit in the last section.

We emphasize that the techniques used here, and in \cite{ift_lop_euler, lac_euler}, are specific to the ideal flows in dimension two, around one obstacle. The study of several obstacles does not allow us to use Riemann mappings. Loosing the explicit formula of the Biot-Savart law, the author in \cite{milton} choose to work in a bounded domain, with several holes, where one hole shrinks to a point. In a bounded domain, he can use the maximum principle, and he obtains a theorem similar to Theorem \ref{ift_main}. In \cite{lac_lop}, we work in an unbounded domain, with $n$ obstacles, of size $\e$, uniformly distributed on a imaginary curve $\G$, and the goal is to determine if a chain of small islands ($n\to \infty$, $\e\to 0$) has the same effect as a wall $\G$. 

Concerning the viscous flow, there is no control of the vorticity in a domain with some boundary. Therefore, we do not use $\om$ and the Biot-Savart law, and we gain control on one derivative of the velocity thanks to the energy inequality. The case of a small obstacle in dimension two is studied in \cite{ift_lop_NS}, and in dimension three in \cite{ift_kell}. The thin obstacle in dimension two is treated in \cite{lac_NS}, and in dimension three in \cite{lac_3D}. We note that there does not exist any result for ideal flow in dimension three. Indeed, we cannot use the vorticity equation, and we do not have energy inequality. Then, a control of  the derivative of the velocity is missing.

For the sake of clarity, the main notations are listed in an appendix at the end of the paper.

\section{Conformal mapping}\label{sect : 2}

As it is mentioned in the introduction, complex analysis is an important tool for the study of two dimensional ideal flow outside one obstacle. Identifying $\R^2$ with the complex plane $\C$, the biholomorphism mapping the exterior of the obstacle to the exterior of the unit disk will be used to obtain an explicit formula for the Biot-Savart law. A key of this work, as in \cite{ift_lop_euler,lac_euler}, is to estimate these biholomorphisms when the size and the sickness of the obstacle go to zero. We begin this section by some reminders on thin obstacles (see \cite{lac_euler} for more details).

\subsection{Thin obstacles}\

Let $D=B(0,1)$ and $S=\pd D$.

We begin by giving some basic definitions on the curve.

\begin{definition} We call a {\it Jordan arc} a curve $C$ given by a parametric representation $C:\f(t)$, $0\leq t\leq 1$ with $\f$ an injective ($=$ one-to-one) function, continuous on $[0,1]$. An {\it open Jordan arc} has a parametrization $C:\f(t)$, $0< t<1$ with $\f$ continuous and injective on $(0,1)$.

We call a {\it Jordan curve} a curve $C$ given by a parametric representation $C:\p(t)$, $t\in \R$, $1$-periodic, with $\p$ an injective function on $[0,1)$, continuous on $\R$.
\end{definition}

Thus a Jordan curve is closed ($\f(0)=\f(1)$) whereas a Jordan arc has distinct endpoints. If $J$ is a Jordan curve in $\C$, then the Jordan Curve Theorem states that $\C\setminus J$ has exactly two components $G_0$ and $G_1$, and these satisfy $\pd G_0=\pd G_1=J$.

The Jordan arc (or curve) is of class $C^{n,\a}$ ($n\in\N^*,0<\a\leq 1$) if its parametrization $\f$ is $n$ times continuously differentiable, satisfying $\f'(t)\neq 0$ for all $t$, and if $|\f^{(n)}(t_1)-\f^{(n)}(t_2)|\leq C|t_1-t_2|^\a$ for all $t_1$ and $t_2$.

Let $\G: \G(t),0\leq t\leq 1$ be a Jordan arc. Then the subset $\R^2\setminus\G$ is connected and we will denote it by $\Pi$. The purpose of the following proposition is to obtain some properties of a biholomorphism $T: \Pi \to \inte\ D^c$. After applying a homothetic transformation, a rotation and a translation, we can suppose that the endpoints of the curve are $-1=\G(0)$ and $1=\G(1)$.

\begin{proposition}\label{2.2}
If $\G$ is a $C^2$ Jordan arc, such that the intersection with the segment $[-1,1]$ is a finite union of segments and points, then there exists a biholomorphism $T:\Pi\to \inte\ D^c$ which verifies the following properties:
\begin{itemize}
\item $T^{-1}$ and $DT^{-1}$ extend continuously up to the boundary, and $T^{-1}$ maps $S$ to $\G$,
\item $DT^{-1}$ is bounded,
\item $T$ and $DT$ extend continuously up to $\G$ with different values on each side of $\G$, except at the endpoints of the curve where $T$ behaves like the square root of the distance and $DT$ behaves like the inverse of the square root of the distance,
\item $DT$ is bounded in the exterior of the disk $B(0,R)$, with $\G \subset B(0,R)$,
\item $DT$ is bounded in $L^p(\Pi\cap B(0,R))$ for all $p<4$ and $R>0$.
\end{itemize}
\end{proposition}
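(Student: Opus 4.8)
\medskip
\noindent\emph{Proof strategy.} The plan is to produce $T$ from the Riemann mapping theorem and then read off each of the five properties from the boundary behaviour of conformal maps. Since $\G$ is a Jordan arc, its complement $\hat\C\setminus\G$ in the sphere is simply connected, so there exists a biholomorphism $T:\Pi\to\inte D^c$; normalising $T(\infty)=\infty$ and $\b:=\lim_{z\to\infty}T(z)/z>0$ makes it unique. (Placing the endpoints of $\G$ at $\pm1$ and assuming $\G\cap[-1,1]$ is a finite union of segments and points are the hypotheses that allow one, as in \cite{lac_euler}, to build smooth obstacles shrinking to $\G$ and control their position relative to the reference segment; one may alternatively deduce the proposition by passing to the limit in the conformal maps attached to such obstacles, but I will argue with $T$ directly.)

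\smallskip
\noindent\emph{Boundary correspondence and interior regularity.} I would first invoke Carath\'eodory's prime-end theorem: $T^{-1}$ extends to a homeomorphism of $\overline{\inte D^c}$ onto the prime-end compactification of $\Pi$, and since $\G$ is a Jordan arc every prime end has a one-point impression, so $T^{-1}$ in fact extends continuously up to $S$ and maps $S$ onto $\G$, two-to-one except at the two points of $S$ lying over the endpoints $\G(0),\G(1)$. Away from the endpoints each face of the slit $\G$ is a $C^2$, hence $C^{1,\a}$, arc; by the (local) Kellogg--Warschawski theorem $T$ and $T^{-1}$, and therefore $DT$ and $DT^{-1}$, extend $C^{1,\a}$-continuously and with non-vanishing derivative up to $\G\setminus\{\G(0),\G(1)\}$ from each side, the two one-sided limits being in general distinct. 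In particular $DT^{-1}$ is bounded near $S$ off the two seam points, and $T,DT$ take different values on the two sides of $\G$ away from the endpoints.

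\smallskip
\noindent\emph{The endpoints --- the hard part.} Near $\G(1)$ the domain $\Pi$ is, up to a $C^2$ change of coordinates, the complement of a segment issuing from that point, i.e.\ a corner of interior angle $2\pi$ whose two sides are the $C^{1,\a}$ faces of $\G$ at $\G(1)$, while the image $T(\G(1))\in S$ is an ordinary (angle-$\pi$) smooth boundary point of $\inte D^c$. By the classical description of a conformal map at a corner of angle $\l\pi$ with smooth sides (Warschawski, Lehman; see Pommerenke, \emph{Boundary Behaviour of Conformal Maps}), with $\l=2$ here, one obtains
\[ T(z)-T(\G(1))=c\,(z-\G(1))^{1/2}\bigl(1+o(1)\bigr),\qquad DT(z)=\tfrac{c}{2}\,(z-\G(1))^{-1/2}\bigl(1+o(1)\bigr), \]
with $c\neq0$, so $T$ behaves like the square root of the distance to $\G(1)$ and $DT$ like its inverse; likewise at $\G(0)$. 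Dually $T^{-1}(w)-\G(1)=c^{-2}(w-T(\G(1)))^{2}(1+o(1))$, whence $DT^{-1}(w)=2c^{-2}(w-T(\G(1)))(1+o(1))\to0$; combined with the previous paragraph this shows that $DT^{-1}$ extends continuously to all of $S$ and, being continuous on the compact set $\overline{\inte D^c}$ (including $\infty$, where $T^{-1}(w)\sim w/\b$ and $DT^{-1}\to1/\b$), is bounded. I expect the genuine work to be exactly this step: justifying the $2\pi$-corner expansion, uniformly up to both faces of the slit, from the $C^2$-regularity of $\G$ (this is also where the structural hypothesis on $\G\cap[-1,1]$ near the endpoints enters).

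\smallskip
\noindent\emph{Behaviour at infinity and the $L^p$ bound.} It remains to note that, $\G$ being contained in $B(0,R)$, $T$ is holomorphic on $\{|z|>R\}$ with a Laurent expansion $T(z)=\b z+b_0+b_1/z+\cdots$ at infinity, so $DT=\b+O(|z|^{-2})$ is continuous up to $\infty$ and hence bounded on $\{|z|\ge R\}$ for every $R$ with $\G\subset B(0,R)$. Finally, on $\overline{\Pi\cap B(0,R)}$ with the slit doubled --- a compact set --- $DT$ is continuous off $\G(0),\G(1)$, hence bounded there, while near the endpoints $|DT(z)|\le C\bigl(|z-\G(0)|^{-1/2}+|z-\G(1)|^{-1/2}\bigr)$; since $\int_{\{|z-q|<\d\}}|z-q|^{-p/2}\,dx<\infty$ exactly when $p<4$, one concludes $DT\in L^p(\Pi\cap B(0,R))$ for all $p<4$ and all $R>0$, completing the proof.
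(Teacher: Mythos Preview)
Your argument is correct, but it follows a genuinely different route from the paper's own proof (which is carried out in \cite{lac_euler} and rewritten, in slightly stronger form, in the Annexe of the present paper). You apply the Riemann map directly to the slit domain $\Pi$ and read the endpoint behaviour off the Warschawski--Lehman corner expansion at an interior angle $2\pi$. The paper instead \emph{factors} the map: it first applies the inverse Joukowski function $G^{-1}(z)=z\pm\sqrt{z^2-1}$ to $\G$, proving that $\tilde\G:=G^{-1}(\G)$ is a $C^{1,1}$ Jordan \emph{curve} (no endpoints), then takes a Riemann map $F:\tilde\Pi\to D^c$ and sets $T=F\circ G^{-1}$. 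Kellogg--Warschawski makes $F,F'$ continuous up to $\tilde\G$, so all the singular behaviour of $T$ and $DT$ at the endpoints is carried by the explicit elementary function $G^{-1}$; in particular the $\sqrt{\,\cdot\,}$ and $1/\sqrt{\,\cdot\,}$ asymptotics are read off from $T'(z)=1\pm z/\sqrt{z^2-1}$ directly.

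Two consequences of the difference are worth noting. First, the hypothesis that $\G\cap[-1,1]$ is a finite union of segments and points is not incidental window dressing: in the paper's proof it is exactly what allows one to choose the branch of $\sqrt{z^2-1}$ consistently along $\G$ so that $\tilde\G$ is a genuine Jordan curve. Your corner-expansion approach does not use this hypothesis at all, so your argument actually covers any $C^2$ Jordan arc; you should flag that rather than attribute the hypothesis to the construction of approximating obstacles. Second, the paper's factorisation pays off when one needs higher derivatives: in the Annexe the authors obtain the $|x|^{-3/2}$ behaviour of $D^2T$ simply by differentiating the Joukowski inverse once more and checking that $\tilde\G$ is $C^{2,1}$, whereas in your framework one would need the next order in the corner expansion (which is available, but less elementary).
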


The behavior of $T$ and $DT$ gives us the behavior of the velocity around the curve (see Proposition \ref{remark_lac_euler}). We rewrite also a remark from \cite{lac_euler} concerning behavior of biholomorphisms at infinity.

\begin{remark} \label{2.5}If we have a biholomorphism $H$ between the exterior of an open connected and simply connected domain $A$ and $D^c$, such that $H(\infty)=\infty$, then there exists a nonzero real number $\b$ and a  holomorphic function $h:\Pi\to \C$ such that:
$$H(z)=\b z+ h(z).$$
with 
$$h'(z)=O\Bigl(\frac{1}{|z|^2}\Bigl), \text{ as }|z|\to\infty.$$

This property can be applied for the $T$ above.
\end{remark}

In \cite{lac_euler,lac_NS}, we consider a family of obstacles $\{\OM_\y \}$ which shrink to the curve $\G$ in the following sense. If we denote by $T_\y$ the biholomorphism between $\Pi_\y:= \R^2\setminus \overline{\OM_\y}$ and $D^c$, then we supposed the following properties:

\begin{assumption} \label{assump}
The biholomorphism family   $\{T_\y\}$ verifies
\begin{itemize}
\item[(i)] $\|(T_\y- T)/|T|\|_{L^\infty(\Pi_\y)}\to 0$ as $\y \to 0$,
\item[(ii)] $\det(DT_\y^{-1})$ is bounded on $D^c$ independently of $\y$,
\item[(iii)] for any $R>0$, $\|DT_\y - DT\|_{L^3(B(0,R)\cap \Pi_\y)}\to 0$ as $\y \to 0$,
\item[(iv)] for $R>0$ large enough, there exists  $C_R>0$ such that $|DT_\y(x)|\leq C_R$ on $B(0,R)^c$.
\item[(v)]  for $R>0$ large enough, there exists  $C_R>0$ such that $|D^2 T_\y(x)|\leq \frac{C_R}{|x|}$ on $B(0,R)^c$.
\end{itemize}
\end{assumption}

\begin{remark}
We can observe that property (iii) implies that for any $R$, $DT_\y$ is bounded in $L^p(B(0,R)\cap \Pi_\y)$ independently of $\y$, for $p\leq3$. Moreover, condition (i) means that $T_\y\to T$ uniformly on $B(0,R)\cap \Pi_\y$ for any $R>0$.
\end{remark}

Assumption \ref{assump} corresponds to Assumption 3.1 in \cite{lac_euler}, adding part (v) and strengthening property (i) therein. 

Concerning our problem of the small curve, we will not need to assume something. Indeed, in the following subsection, we will present a way of thicken the curve such that all the properties of Assumption \ref{assump} are verified. However, an open problem raised by \cite {lac_euler,lac_NS} is to prove that Assumption \ref{assump} is also verified for more general geometrical convergences $\OM_\y \to \G$. It is the purpose of Subsection \ref{assump-proof}.

\subsection{Thicken the curve}\label{thicken}\

Thanks to Proposition \ref{2.2},  for a curve $\G$ given, we associate its biholomorphism $T$. Therefore, we have for
\begin{equation}\label{T_eps}
 \G_\e := \e \G,\ T_\e(x)=T(x/\e).
 \end{equation}

We recall that $T_\e$ maps $\Pi_\e:= \R^2\setminus \G_\e$ to $\R^2 \setminus \overline{D}$ and $\G_\e$ to $\pd D$. Let us define 
\[\OM_{\e,\y} := T_\e^{-1}(B(0,1+\y)\setminus D).\]
Knowing that $T_\e$ is a biholomorphism, we can state that $\OM_{\e,\y}$ is a smooth, bounded, open, connected, simply connected subset of the plane containing $\G_\e$. We can also remark that
\begin{equation}\label{T_eps_eta}
T_{\e,\y}(x)=\frac{1}{1+\y}T_\e(x)=\frac{1}{1+\y}T(x/\e)
\end{equation}
is a biholomorphism mapping $\Pi_{\e,\y}:= \OM_{\e,\y}^c$ to $D^c$ and $\pd \OM_{\e,\y}$ to $\pd D$.

\begin{example}
We give here the shape of $\OM_{\e,\y}$ in the special case of the segment. If $\G:= [(-1,0);(1,0)]$, we have an explicit form for $T$. It is the inverse map of the Joukowski function:
\[ G(z)=\frac{1}2 (z+\frac1z) \text{ (see \cite{lac_euler} for more details about this function).}\]
In this case, we can easily compute that $\OM_{\e,\y}$ is the interior of an ellipse parametrized by
\[ x(\th)=\frac{\e}2 \bigl( (1+\y)+\frac{1}{1+\y}\bigl) \cos \th,\ y(\th)=\frac{\e}2 \bigl( (1+\y)-\frac{1}{1+\y}\bigl) \cos \th.\]
Then, for small $\y$, the length of the ellipse is approximately (Taylor expansion of order 2) $\e(2+\y^2)$ whereas the higher is $\e(2\y)$. We can also see that for $\y=0$, we obtain the segment $\G_\e=[(-\e,0);(\e,0)]$.
\end{example}

We note that, for $\e$ fixed, the family defined in \eqref{T_eps_eta} verifies Assumption \ref{assump}. Therefore, we can apply directly the result obtain in \cite{lac_euler}. Let $\om_0$ be a smooth initial vorticity, compactly supported outside the obstacle. Let $\g$ be a real. The motion of an incompressible ideal flow in $\Pi_{\e,\y}$ is governed by the Euler equations:
\begin{equation*}
\left\lbrace\begin{aligned}
&\pd_t u^{\e,\y}+u^{\e,\y}\cdot \na u^{\e,\y}=-\na p^{\e,\y} &\text{ in }(0,\infty) \times {\Pi_{\e,\y}} \\
&\diver u^{\e,\y} =0 &\text{ in } [0,\infty) \times {\Pi_{\e,\y}} \\
&u^{\e,\y}\cdot \hat{n} =0 &\text{ on } [0,\infty) \times {\pd \OM_{\e,\y}}\\
&\lim_{|x|\to\infty}|u^{\e,\y}|=0 & \text{ for }t\in[0,\infty)\\
&u^{\e,\y}(0,x)=u_0^{\e,\y}(x) &\text{ in } \Pi_{\e,\y}
\end{aligned}\right.
\end{equation*}
where $p^{\e,\y}=p^{\e,\y}(t,x)$ is the pressure. In fact, to study the two dimensional ideal flows, it is more convenient to work on the vorticity equations ${\om}^{\e,\y}:= \curl u^{\e,\y}(=\pd_1 u^{\e,\y}_2-\pd_2 u^{\e,\y}_1)$ which are equivalent to the previous system: 
\begin{equation}\label{euler}
 \left\lbrace\begin{aligned}
&\pd_t {\om}^{\e,\y}+u^{\e,\y}\cdot \na {\om}^{\e,\y}=0 &\text{ in } (0,\infty) \times {\Pi_{\e,\y}} \\
&\diver u^{\e,\y} = 0, \ \curl u^{\e,\y} = \om^{\e,\y} &\text{ in } (0,\infty) \times {\Pi_{\e,\y}} \\
&u^{\e,\y}\cdot \hat{n} =0 &\text{ on } [0,\infty) \times {\pd \OM_{\e,\y}}\\
&\lim_{|x|\to\infty}|u^{\e,\y}|=0 & \text{ for }t\in[0,\infty)\\
& \oint_{\pd \OM_{\e,\y}} u^{\e,\y}\cdot \dss = \g & \text{ for }t\in[0,\infty)\\
&{\om}^{\e,\y}(0,x)={\om}_0(x) &\text{ in } \Pi_{\e,\y}.
\end{aligned}\right.
\end{equation}
The interest of such a formulation is that we recognize a transport equation. The transport nature allows us to conclude that the $L^p(\Pi_{\e,\y})$ norms of the vorticity are conserved, for $p\in [1,\infty]$, which gives us directly an estimate and a weak convergence in $L^\infty(L^p)$ for the vorticity.

For all $\e,\y$, $\OM_{\e,\y}$ is smooth and Kikuchi in \cite{kiku} states that there exists a unique pair $(u^{\e,\y},\om^{\e,\y})$ which is a global strong solution of Euler equation in $\R^+\times \Pi_{\e,\y}$ verifying
\[\om^{\e,\y}(0,\cdot)=\om_0,\  \int_{\pd \OM_{\e,\y}} u^{\e,\y}(0,s)\cdot \tau \, ds =\g \text{ and } \lim_{|x|\to \infty} u^{\e,\y}(t,x)=0 \ \forall t.\]
A characteristic of this solution is the conservation of the velocity circulation on the boundary, and that $m:= \int \curl u^{\e,\y}= \int \om_0$.

Next, we apply the result of \cite{lac_euler} to define a pair $(u^\e,\om^\e)$:
\begin{theorem}\label{lac_euler} If Assumption \ref{assump} is verified, then there exists a subsequence $\y_k\to 0$ with
\[ \F^{\e,\y} \om^{\e,\y} \rightharpoonup \om^\e \text{ weak-$*$ in } L^\infty(\R;L^4(\R^2)) \text{ and }  \F^{\e,\y} u^{\e,\y} \to u^\e  \text{ strongly in } L^2_{\loc}(\R\times \R^2)\]
where the following properties are verified
\begin{itemize}
\item $u^\e$ can be expressed in terms of $\g$, and $\om^\e$:
\[ u^\e(x)=\dfrac{1}{2\pi} DT_\e^t(x) \int_{\R^2} \Bigl(\dfrac{(T_\e(x)-T_\e(y))^\perp}{|T_\e(x)-T_\e(y)|^2}-\dfrac{(T_\e(x)- T_\e(y)^*)^\perp}{|T_\e(x)- T_\e(y)^*|^2}\Bigl)\om^\e(t,y)dy + \a\dfrac{1}{2\pi} DT_\e^t(x) \dfrac{T_\e(x)^\perp}{|T_\e(x)|^2} \]
with $\a = \int \om_0 + \g$;
\item $u^\e$ and $\om^\e$ are weak solutions of 
\begin{equation*}
\pd_t \om^\e+u^\e.\na\om^\e =0 \text{ in }\R^2\times(0,\infty).
\end{equation*}
\end{itemize}
\end{theorem}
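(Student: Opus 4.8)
The plan is to obtain this statement as a direct application of the main convergence theorem of \cite{lac_euler} to the specific family $\{T_{\e,\y}\}_{\y>0}$ defined in \eqref{T_eps_eta}, with $\e>0$ fixed and $\y\to 0$. The first task is therefore to check that, for fixed $\e$, this family satisfies all of Assumption \ref{assump} uniformly in $\y$. Since $T_{\e,\y}=\frac{1}{1+\y}T_\e$, properties (i)--(v) follow at once from the corresponding estimates on $T_\e$ supplied by Proposition \ref{2.2} (the $L^p$-boundedness of $DT_\e$ for $p<3$, the boundedness of $DT_\e$ outside a large ball, the $C_R/|x|$ bound on $D^2T_\e$, and the bound $\det(DT_\e^{-1})$ bounded on $D^c$), together with the elementary fact that $\tfrac{1}{1+\y}\to 1$. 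Once this verification is complete, I would reproduce the argument of \cite{lac_euler}, whose structure I outline next.

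First, the a priori estimates on the vorticity. Since $\om^{\e,\y}$ is transported by the divergence-free field $u^{\e,\y}$, which is tangent to $\pd\OM_{\e,\y}$, all $L^p(\Pi_{\e,\y})$ norms are conserved in time: $\|\F^{\e,\y}\om^{\e,\y}(t,\cdot)\|_{L^p(\R^2)}=\|\om_0\|_{L^p}$ for every $p\in[1,\infty]$ and every $t\geq 0$. In particular $\F^{\e,\y}\om^{\e,\y}$ is bounded in $L^\infty(\R^+;L^4(\R^2))$, so some subsequence converges weak-$*$ to a limit $\om^\e$. Next, the explicit Biot-Savart law: pulling back the Green's function of $D^c$ by $T_{\e,\y}$, one writes $u^{\e,\y}$ as the sum of a vorticity term — built from the kernel $\frac{(T_{\e,\y}(x)-T_{\e,\y}(y))^\perp}{|T_{\e,\y}(x)-T_{\e,\y}(y)|^2}$ and its reflection through $y\mapsto y^*$, all conjugated by $DT_{\e,\y}^t$ — plus a circulation term $\frac{\a}{2\pi}DT_{\e,\y}^t\frac{T_{\e,\y}^\perp}{|T_{\e,\y}|^2}$ with $\a=\int\om_0+\g$ (the extra $\int\om_0$ encoding the circulation generated by the exterior vorticity), exactly the expression displayed in the statement with $\e,\y$ in place of $\e$. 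Combining the pointwise kernel bound (of order $|T_{\e,\y}(x)-T_{\e,\y}(y)|^{-1}$) with Assumption \ref{assump}(ii)--(iv) and the $L^p$-control of $DT_{\e,\y}$ from (iii), one deduces that $\F^{\e,\y}u^{\e,\y}$ is bounded in $L^\infty(\R^+;L^p_{\loc}(\R^2))$ for every $p<4$, hence in particular in $L^2_{\loc}$.

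Then comes the passage to the limit. Assumption \ref{assump}(i) gives $T_{\e,\y}\to T_\e$ uniformly on compact sets and (iii) gives $DT_{\e,\y}\to DT_\e$ in $L^3_{\loc}$; feeding these convergences together with the weak-$*$ convergence of $\om^{\e,\y}$ into a dominated-convergence argument for the locally integrable kernel $|T_\e(x)-T_\e(y)|^{-1}$, I would pass to the limit in the Biot-Savart formula and identify $u^\e$ with the claimed expression. A Div-Curl lemma — using that $\diver(\F^{\e,\y}u^{\e,\y})$ and $\curl(\F^{\e,\y}u^{\e,\y})$ are relatively compact in $H^{-1}_{\loc}$ — then upgrades (after a further extraction) the convergence $\F^{\e,\y}u^{\e,\y}\to u^\e$ to strong convergence in $L^2_{\loc}(\R^+\times\R^2)$. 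Finally, testing \eqref{euler} against $\f\in C_c^\infty([0,\infty)\times\R^2)$: the linear terms pass to the limit trivially, and the nonlinear term $\int\na\f\cdot u^{\e,\y}\om^{\e,\y}$ converges to $\int\na\f\cdot u^\e\om^\e$ as the product of a strongly $L^2_{\loc}$-convergent and a weakly $L^4$-convergent sequence; the discrepancy coming from $\F^{\e,\y}$ vanishes because the support of $1-\F^{\e,\y}$ collapses onto $\G_\e$, a set of zero measure. This yields $\pd_t\om^\e+u^\e\cdot\na\om^\e=0$ in the sense of distributions.

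The step I expect to be the main obstacle is the strong $L^2_{\loc}$ convergence of the velocity, which is precisely what makes the nonlinear term pass to the limit: the conformal map $T_{\e,\y}$ degenerates at the two endpoints of $\G_\e$, where $DT_{\e,\y}$ blows up like the inverse square root of the distance, so $u^{\e,\y}$ is only controlled in $L^p_{\loc}$ for $p<4$ and never in $L^\infty$, and one cannot fall back on an equicontinuity/Arzel\`a--Ascoli argument. The strong convergence must instead be extracted from the explicit structure of the Biot-Savart kernel (splitting the kernel into a local singular part, handled by the uniform $L^p$ bounds, and a smooth far part, handled by the uniform convergence of $T_{\e,\y}$) together with the quantitative bounds of Assumption \ref{assump}. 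A secondary technical annoyance is the bookkeeping of the cut-offs $\F^{\e,\y}$, whose supports move with $\y$, when justifying that the limit is unaffected by them.
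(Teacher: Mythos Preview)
Your overall approach matches the paper's: this theorem is not proved from scratch here but is obtained by directly invoking the main result of \cite{lac_euler}, once one has checked that the explicit family $T_{\e,\y}=\frac{1}{1+\y}T_\e$ satisfies Assumption \ref{assump} for fixed $\e$ (which it does, essentially because the prefactor $\frac{1}{1+\y}$ tends to $1$ and all the required bounds on $T_\e$ come from Proposition \ref{2.2} and Remark \ref{2.5}). That is the entire content of the paper's ``proof''.

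However, your sketch of how \cite{lac_euler} itself works is not quite faithful, and since you flag the strong $L^2_{\loc}$ convergence as the main obstacle, the distinction matters. You propose to obtain the strong convergence of $\F^{\e,\y}u^{\e,\y}$ via a Div-Curl lemma (compactness of $\diver$ and $\curl$ in $H^{-1}_{\loc}$). The present paper is explicit that this route is \emph{not} available in the thin-obstacle setting of \cite{lac_euler}: see Subsection 2.2.1, where it is stated that the Aubin--Lions/Div--Curl machinery ``cannot be applied in \cite{lac_euler}'' and is specific to \cite{ift_lop_euler} and to the small-curve problem. The actual mechanism in \cite{lac_euler} is the one you mention first and then supplement unnecessarily: one passes to the limit \emph{directly} in the explicit Biot--Savart formula, using the uniform convergence $T_{\e,\y}\to T_\e$ (Assumption \ref{assump}(i)), the $L^3$ convergence $DT_{\e,\y}\to DT_\e$ (Assumption \ref{assump}(iii)), and dominated convergence on the kernel. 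This yields the strong convergence of the velocity without any appeal to Div--Curl compactness. So your dominated-convergence step already does the job; the Div--Curl upgrade you append is both unneeded here and, in the generality of \cite{lac_euler}, not actually valid.
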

In this theorem, $\F^{\e,\y}$ denotes a cutoff function of an $\y$-neighborhood of $\G_\e$, and we write $x^*=\frac{x}{|x|^2}$. Moreover, we can find in \cite{lac_euler} the following properties.

\begin{proposition}\label{remark_lac_euler}
Let $u^\e$ be given as in Theorem \ref{lac_euler}. For fixed $t$, the velocity
\begin{itemize}
\item[i)] is continuous on $\R^2\setminus \G_\e$ and tends to zero at infinity.
\item[ii)] is continuous up to $\G_\e\setminus \{(-\e,0);(\e,0)\}$, with different values on each side of $\G_\e$.
\item[iii)] blows up at the endpoints of the curve like $C/\sqrt{|x-(\e,0)||x+(\e,0)|}$, which belongs to $L^p_{\loc}$ for $p<4$.
\item[iv)] is tangent to the curve, with circulation $\g$.
\item[v)]  we have $\diver u^\e=0$ and $\curl u^\e=\om^\e+g^\e_{\om^\e}\d_{\G_\e}$ in the sense of distributions of $\R^2$.
\end{itemize}
The function $g^\e_{\om^\e}$ is continuous on $\G_\e$ and blows up at the endpoints of the curve $\G_\e$ as the inverse of the square root of the distance. One can also characterize $g^\e_{\om^\e}$ as the jump of the tangential velocity across $\G_\e$.
\end{proposition}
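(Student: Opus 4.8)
The plan is to derive every item from the explicit representation of $u^\e$ in Theorem~\ref{lac_euler} combined with the fine behaviour of the conformal map recorded in Proposition~\ref{2.2}. The first step is to rewrite, for fixed $t$, the formula of Theorem~\ref{lac_euler} as
\[
u^\e(x)=DT_\e^t(x)\, v(T_\e(x)),
\]
where $v$ is the bracketed field of that theorem viewed as a function on $D^c$: it is the two--dimensional Biot--Savart velocity on the exterior of the unit disk (with its image charge inside $D$) associated with the vorticity $\om^\e(t,\cdot)$ pushed forward to $D^c$ by $T_\e$ and with the total circulation $\a=\int\om_0+\g$. Since $\om^\e(t,\cdot)\in L^1\cap L^\infty(\R^2)$ and $T_\e$ is a biholomorphism of $\Pi_\e$ onto $D^c$ with $DT_\e^{-1}$ and $\det DT_\e^{-1}$ bounded (Proposition~\ref{2.2}), the classical theory of the Biot--Savart kernel on $D^c$ gives that $v$ is continuous on $\overline{D^c}$, bounded, tangent to $S$, tends to $0$ at infinity, and has circulation around $S$ equal to $\a-\int\om_0$, hence to $\g$ by the very choice of $\a$. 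All the information I need on $T_\e$ comes from Proposition~\ref{2.2} through the scaling $T_\e(x)=T(x/\e)$, $DT_\e(x)=\tfrac1\e DT(x/\e)$: $T_\e$ and $DT_\e$ are continuous on $\R^2\setminus\G_\e$, they extend continuously to $\G_\e$ from each side with (in general) distinct one-sided values, $DT_\e$ is bounded near infinity, one has $|DT_\e(x)|\le C_\e\,|x-(\e,0)|^{-1/2}|x+(\e,0)|^{-1/2}$ near the endpoints $(\pm\e,0)$ of $\G_\e$, and $DT_\e\in L^p(\Pi_\e\cap B(0,R))$ for all $p<4$, $R>0$.

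Items (i)--(iii) then follow by inspection of $u^\e=DT_\e^t\,(v\circ T_\e)$. On $\R^2\setminus\G_\e$ both factors are continuous, so $u^\e$ is; as $|x|\to\infty$, $DT_\e$ stays bounded while $|T_\e(x)|\to\infty$ and hence $v(T_\e(x))\to0$, which yields (i). Along $\G_\e$ minus its endpoints both factors possess one-sided continuous extensions, hence so does $u^\e$, and the two one-sided limits are in general distinct: this is (ii). Near an endpoint, $v\circ T_\e$ is bounded while $|DT_\e|$ carries the factor $|x-(\e,0)|^{-1/2}|x+(\e,0)|^{-1/2}$, so $|u^\e(x)|\le C/\sqrt{|x-(\e,0)|\,|x+(\e,0)|}$, and the $L^p_{\loc}$ membership for $p<4$ is exactly the last bullet of Proposition~\ref{2.2}: this is (iii).

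For (iv) and (v) I would use conformality directly. Because $DT_\e$ is pointwise a rotation--dilation it commutes with the rotation by $\pi/2$, so if $\psi$ denotes a stream function of $v$ (constant on $S$, since $v$ is tangent to $S$), then $u^\e=\na^\perp(\psi\circ T_\e)$. Hence $\diver u^\e=0$, and $u^\e$ is tangent to $\G_\e$ because $\psi\circ T_\e$ is constant along $\G_\e$; the circulation of $u^\e$ around the slit $\G_\e$ equals, by the change of variables $\xi=T_\e(x)$ in the line integral, the circulation of $v$ around $S$, which is $\g$. This gives (iv). For (v), by (iii) one has $u^\e(t,\cdot)\in L^1_{\loc}(\R^2)$, so its distributional curl on all of $\R^2$ is well defined; on $\Pi_\e$ one has $\curl u^\e=\om^\e$, and testing against $\f\in C^\infty_c(\R^2)$ and integrating by parts over $\Pi_\e$ with small disks of radius $r$ around the two endpoints removed, the only surviving boundary term is $\int_{\G_\e}\f\, g^\e_{\om^\e}\,ds$, where $g^\e_{\om^\e}$ is the jump of the tangential component of $u^\e$ across $\G_\e$; the endpoint circles contribute $O(r^{1/2})\to0$ since $|u^\e|=O(r^{-1/2})$ there while their length is $O(r)$. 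This yields $\curl u^\e=\om^\e+g^\e_{\om^\e}\d_{\G_\e}$ in $\Dc'(\R^2)$ together with $\diver u^\e=0$. Continuity of $g^\e_{\om^\e}$ on $\G_\e$ away from the endpoints and its blow-up there like the inverse square root of the distance are restatements of (ii) and (iii); and since $s\mapsto s^{-1/2}$ is integrable in one dimension, $g^\e_{\om^\e}\in L^1(\G_\e)$ and $g^\e_{\om^\e}\d_{\G_\e}$ is a genuine finite measure.

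The main obstacle is twofold. First, one has to be certain that the singular factor of $DT_\e$ at the endpoints genuinely survives in $u^\e$ and is not cancelled by a compensating decay of $v\circ T_\e$; this rests on the precise endpoint asymptotics of $T$ recorded in Proposition~\ref{2.2} — the square-root behaviour of $T$ itself, not merely the bound on $DT$ — together with the fact that $v$ is generically nonzero on $S$. Second, the distributional identity in (v) requires a careful integration by parts across the slit $\G_\e$, the delicate point being the endpoints, where $u^\e$ is unbounded: one must use the $L^p$ bound with $p<4$ to discard the corresponding boundary terms before identifying the surviving one as the tangential jump.
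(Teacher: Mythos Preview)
The paper does not give a self-contained proof of this proposition; it is quoted from \cite{lac_euler} (see the sentence immediately preceding it: ``Moreover, we can find in \cite{lac_euler} the following properties''). Your proposal sketches precisely the argument one finds there: factor $u^\e$ as $DT_\e^t\cdot(v\circ T_\e)$ with $v$ the Biot--Savart velocity on $D^c$, read off the regularity and endpoint behaviour of $u^\e$ from the boundedness and continuity of $v$ on $\overline{D^c}$ together with the properties of $T_\e$ recorded in Proposition~\ref{2.2}, and obtain the distributional identities in (v) by integration by parts across the slit, using the $O(r^{-1/2})$ endpoint bound to discard the contributions of the small circles. Your closing caveats are well placed; in particular, the assertion that $u^\e$ genuinely \emph{blows up} (rather than is merely bounded by $C/\sqrt{|x-(\e,0)||x+(\e,0)|}$) at the endpoints does rest on the square-root behaviour of $T$ itself, not just the bound on $DT$.
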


There is a sharp contrast between the behavior of ideal flows around a
small and a thin obstacle. In \cite{ift_lop_euler}, the additional term due to the vanishing obstacle appears as a
time-independent additional convection centered at $P$, whereas in the case of a thin obstacle, the correction term depends on the time. Moreover, if initially $\g =0$ we note that there is no singular term in the case of the small obstacle, whereas $g_{\om}$ again appears in the case of a thin obstacle. Physically, it can be interpreted by the fact that a wall blocks the fluids, a point not.

Therefore, for $\G_\e$ given, we have constructed a weak solution  $(u^\e,\om^\e)$ for the Euler equation outside the curve $\G_\e$ in the sense of Definition \ref{sol-curve}. Actually, it misses one property which can be establish as follow. For all $\y$, $\om^{\e,\y}$ verifies the transport equation in a strong sense, then we have the classical estimates $\| \om^{\e,\y}(t,\cdot)\|_{L^p} = \| \om_0\|_{L^p}$ for all $p\in [1,\infty]$. Then, the weak limit allows us to state that:
\[ \|\om^{\e}(t,\cdot)\|_{L^p(\Pi_{\e})}\leq \limsup_{\y\to 0} \|\om^{\e,\y}(t,\cdot)\|_{L^p(\Pi_{\e})}=\|\om_0\|_{L^p(\R^2)},\]
which means that $\om^\e \in L^{\infty}(L^1\cap L^\infty)$.

By thicken the curve, we prove here that there exists at least one weak solution of the Euler equations outside the curve. The goal of this paper is to study the limit of  $(u^\e,\om^\e)$, where  $(u^\e,\om^\e)$ is one of these solutions. An important future work will be the uniqueness of $(u^\e,\om^\e)$. Indeed, until now, it is possible that this pair depends on the way of the obstacles shrink to the curve, and the convergence  $(u^{\e,\y},\om^{\e,\y})\to (u^\e,\om^\e)$ holds true by extracting a subsequence.

\begin{remark} We have:
\[T_{\e,\y}=T_{1,\y}(x/\e).\]
This equality means that, for $\y >0$ fixed, we are exactly in the case of \cite{ift_lop_euler} (see Theorem \ref{ift_main}). Then, we can apply directly their result to extract a subsequence $\e_k \to 0$ such that 
\[ \F^{\e,\y} \om^{\e,\y} \rightharpoonup \om^\y \text{ weak-$*$ in } L^\infty(\R\times \R^2) \text{ and }  \F^{\e,\y} u^{\e,\y} \to u^\y  \text{ strongly in } L^2_{\loc}(\R\times \R^2)\]
where the following properties are verified
\begin{itemize}
\item $u^\y$ and $\om^\y$ are weak solutions of $\pd_t \om^\y+u^\y.\na\om^\y =0$ in $\R^2\times(0,\infty)$;
\item $\diver u^\y=0$ and $\curl u^\y=\om^\y+\g \d_{0}$ in the sense of distributions of $\R^2$;
\item $|u^\y|\to 0$ at infinity.
\end{itemize}
Moreover, \cite{lac_miot} establishes the uniqueness of the previous problem, then we can state that the convergence holds true without extraction of a subsequence, and the limit does not depend on $\y$:
\[ (u^{\e,\y},\om^{\e,\y})\to (u,\om) \text{ as } \e\to 0.\]
\end{remark}

\subsubsection{Comment on the previous remark}
We could take advantage of the previous remark thinking along this line: assuming that we can prove that
\begin{equation}\label{conv-unif}
 (u^{\e,\y},\om^{\e,\y})\to (u^\e,\om^\e) \text{ as } \y\to 0 \text{, uniformly in }\e,
 \end{equation}
then, for all $\rho >0$, there exists a $\y_\rho$, such that
\[ \| (u^\e,\om^\e) - (u^{\e,\y},\om^{\e,\y}) \| \leq \rho/2,\ \forall \e.\]
For this $\y_\rho$ fixed, we apply directly the result of \cite{ift_lop_euler} to find $\e_\rho$, such that for all $\e\leq \e_\rho$, we have  
\[ \| (u^{\e,\y_\rho},\om^{\e,\y_\rho}) - (u,\om) \| \leq \rho/2.\]
The fact that we find the same $(u,\om)$ for any $\y$ comes from \cite{lac_miot}. Therefore, we have found $\e_\rho$, such that for all $\e\leq \e_\rho$, we have
\[ \| (u^\e,\om^\e) - (u,\om) \| \leq  \| (u^\e,\om^\e) - (u^{\e,\y_\rho},\om^{\e,\y_\rho}) \| +  \| (u^{\e,\y_\rho},\om^{\e,\y_\rho}) - (u,\om) \| \leq \rho,\]
which is the desired result. To make this proof rigourous, we have to prove \eqref{conv-unif}, i.e. to rewrite the article \cite{lac_euler} with adding the parameter $\e$, and to check carefully that the convergence is uniform in $\e$. It is possible to check that all the estimates are uniform in $\e$, but it is difficult to give a sense to uniform convergence for the weak-$*$ topology. We would have had to establish a uniform version of Banach-Alaoglu's Theorem. Indeed, we always say: as $\| \om^{\e,\y} \|_{L^p} = \| \om_0 \|_{L^p}$, we can extract a subsequence such that $ \om^{\e,\y} \rightharpoonup \om^\e$ weak-$*$ in $L^\infty(L^p)$, as $\y\to 0$. What does it mean and how can we prove that we can choose a subsequence such that this convergence weak-$*$ is uniform in $\e$ ?

Actually, the general idea is to pass to the limit $(u^\e,\om^\e)\to(u,\om)$ with similar arguments to \cite{ift_lop_euler}. This last article cannot be used directly and, as we will see, it is not obvious to adapt their arguments. An other possibility is to adapt the arguments of \cite{lac_euler}. However, in the present case of the small curve, some estimates becomes better than in the case of the thin obstacle, and we will see that we can apply Aubin-Lions and Div-Curl Lemmas, in order to pass to the limit. This idea appeared in \cite{ift_lop_euler}, cannot be applied in \cite{lac_euler}, and we choose here to use it in our case because it goes faster than using the arguments from \cite{lac_euler}.

\subsubsection{Biholomorphism estimate}
As it was focused in the introduction, we need to estimate the biholomorphism to estimate the velocity, thanks to the Biot-Savart law.
\begin{proposition}\label{biholo-est}
The biholomorphism family   $\{T_{\e}\}$, defined in \eqref{T_eps}, verifies
\begin{itemize}
\item[(i)] $\e^{-2} \det(DT_{\e}^{-1})$ is bounded on $D^c$ independently of $\e$,
\item[(ii)] for any $R>0$, for all $p\in [1,4)$, $\e \|DT_\e\|_{L^p(B(0,R))}$ is bounded uniformly in $\e$,
\item[(iii)] for $R>0$ large enough, there exists  $C_R>0$ such that $|DT_{\e}(x)|\leq \dfrac{C_R}\e$ on $B(0,R)^c$.
\end{itemize}
\end{proposition}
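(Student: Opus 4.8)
The plan is to reduce all three statements to the defining scaling $T_\e(x)=T(x/\e)$ of \eqref{T_eps} together with the structural properties of the fixed biholomorphism $T$ collected in Proposition \ref{2.2}; no new analysis is needed beyond the chain rule and one change of variables. From $T_\e(x)=T(x/\e)$ one gets immediately $DT_\e(x)=\e^{-1}DT(x/\e)$, and, since $T_\e^{-1}(y)=\e\,T^{-1}(y)$, also $DT_\e^{-1}(y)=\e\,DT^{-1}(y)$, hence $\det(DT_\e^{-1})(y)=\e^{2}\det(DT^{-1})(y)$ in dimension two. Throughout I would assume $\e\le 1$, which is harmless since only $\e\to 0$ is relevant. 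I also fix once and for all a radius $R_1$ with $\G\subset B(0,R_1)$ on whose exterior $DT$ is bounded, say $|DT(y)|\le C_1$ for $|y|\ge R_1$, and set $M_p:=\|DT\|_{L^p(\Pi\cap B(0,R_1))}<\infty$ for $p<4$; both facts are furnished by Proposition \ref{2.2}.

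Items (i) and (iii) are then one-liners. For (i), $\e^{-2}\det(DT_\e^{-1})=\det(DT^{-1})$ on $D^c$, and $\det(DT^{-1})$ is bounded there because $DT^{-1}$ itself is bounded on $D^c$ (second item of Proposition \ref{2.2}); the bound does not involve $\e$. For (iii), take any $R\ge R_1$: if $|x|>R$ and $\e\le 1$ then $|x/\e|\ge|x|>R_1$, so $|DT(x/\e)|\le C_1$ and therefore $|DT_\e(x)|=\e^{-1}|DT(x/\e)|\le C_1/\e$, i.e. $C_R:=C_1$ does the job.

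The only step requiring a little care is (ii), where the point is that the prefactor $\e$ must exactly absorb the growth of the $L^p$ norm of $DT$ over the dilated ball $B(0,R/\e)$. Changing variables $y=x/\e$ gives $\|DT_\e\|_{L^p(B(0,R))}=\e^{-1+2/p}\|DT\|_{L^p(B(0,R/\e))}$, hence $\e\|DT_\e\|_{L^p(B(0,R))}=\e^{2/p}\|DT\|_{L^p(B(0,R/\e))}$. I would then split $B(0,R/\e)$ into $B(0,R_1)$, where $\|DT\|_{L^p}\le M_p$ by the $L^p$-integrability of $DT$ for $p<4$ (last item of Proposition \ref{2.2}), and its complement inside $B(0,R/\e)$, where $|DT|\le C_1$ so that $\|DT\|_{L^p}\le C_1\bigl(\pi(R/\e)^2\bigr)^{1/p}=C_1\pi^{1/p}(R/\e)^{2/p}$. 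Combining the two pieces and using $\e\le 1$,
\[ \e\|DT_\e\|_{L^p(B(0,R))}\le \e^{2/p}M_p+C_1\pi^{1/p}R^{2/p}\le M_p+C_1\pi^{1/p}R^{2/p}, \]
which is independent of $\e$. This is exactly where the restriction $p<4$ enters, and it is the reason (ii) is stated with $p\in[1,4)$; the remaining bookkeeping is routine and I do not expect any genuine obstacle here.
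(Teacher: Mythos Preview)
Your proof is correct and follows essentially the same route as the paper: both derive the scaling identities $DT_\e(x)=\e^{-1}DT(x/\e)$ and $\det(DT_\e^{-1})=\e^2\det(DT^{-1})$, dispatch (i) and (iii) immediately from Proposition~\ref{2.2}, and for (ii) perform the change of variables $y=x/\e$ and split $B(0,R/\e)$ into a fixed ball where $DT\in L^p$ for $p<4$ and its complement where $DT$ is bounded. The only cosmetic difference is that the paper invokes Remark~\ref{2.5} for the bound $|DT|\le |\b|+1$ at infinity, whereas you cite the equivalent statement already contained in Proposition~\ref{2.2}.
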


\begin{proof} The point (iii) is straightforward using \eqref{T_eps} and Proposition  \ref{2.2}.

Concerning (i), we directly see that
\[T_{\e}^{-1}(x) = \e T^{-1}(x),\]
hence
\[ \det(DT_{\e}^{-1}) (x) = \e^2 \det(DT^{-1}) (x),\]
which prove point (i), thanks to Proposition \ref{2.2}.

For the last point, we compute
\begin{eqnarray*}
 \Bigl( \int_{B(0,R)} \bigl| \frac1\e DT(\frac{x}\e) \bigl|^p\, dx\Bigl)^{1/p}= \e^{\frac2p - 1} \Bigl( \int_{B(0,R/\e)} \bigl|  DT(y) \bigl|^p\, dy\Bigl)^{1/p}.
\end{eqnarray*}
Using Proposition \ref{2.2}, we know that $DT$ belongs in $L^p (0,R_1)$ for all $R_1>0$ and all $p<4$. However, we should also take care of the behavior of $DT$ at infinity, because $\lim_{\e\to 0} R/\e =\infty$. Remark \ref{2.5} allows us to pretend that there exists $\tilde R$ such that
\[ |DT(y)| \leq \b+1 \text{ for } |y|\geq \tilde R.\] 
Therefore, for $p<4$
\begin{eqnarray*}
\Bigl( \int_{B(0,R/\e)} \bigl|  DT(y) \bigl|^p\, dy\Bigl)^{1/p}&\leq&\Bigl( \int_{B(0,\tilde R)} \bigl|  DT(y) \bigl|^p\, dy\Bigl)^{1/p}+\Bigl( \int_{B(0,R/\e)\setminus B(0,\tilde R)} ( \b+1 )^p\, dy\Bigl)^{1/p} \\
&\leq& C_p + \frac{C_\b R^{2/p}}{\e^{2/p}},
\end{eqnarray*}
which means that, for $\e$ small enough, independently of $R$,
\begin{equation}\label{T_Lp}
\|DT_\e\|_{L^p(B(0,R)\cap\Pi_{\e,\y})}  \leq \e^{- 1} C(1 + R^{2/p}),
\end{equation}
which ends the proof.
\end{proof}

\subsection{Biot-Savart law}\label{sect:biot}\ 

One of the key of the study for two dimensional ideal flow is to work with the vorticity equation, which is a transport equation. For example, in the case of a smooth obstacle, we choose initially $\om_0 \in L^1\cap L^\infty$, then $\| \om^\e(t,\cdot)\|_{L^p}= \| \om_0\|_{L^p}$ for all $t$. Next, Banach-Alaoglu's Theorem allows us to extract a subsequence such that $\om^\e \rightharpoonup \om$ weak-$*$. So, we have some estimates and weak-$*$ convergence for the vorticity, and the goal is to establish estimates and strong convergence for the velocity. For that, we introduce the Biot-Savart law, which gives the velocity in terms of the vorticity. Another advantage of the two dimensional space is that we have explicit formula, thanks to complex analysis and the identification of $\R^2$ and $\C$.

Let $\OM$ be a bounded, connected, simply connected subset of the plane. We denote by $\Pi$ the exterior domain: $\Pi:= \R^2\setminus\overline{\OM}$, and let $T$ be a biholomorphism between $\Pi$ and $(\overline D)^c$ such that $T(\infty)=\infty$.

We denote by $G_{\Pi}=G_{\Pi} (x,y)$ the Green's function, whose the value is:
\begin{equation}
\label{green}
G_{\Pi}(x,y)=\frac{1}{2\pi}\ln \frac{|T(x)-T(y)|}{|T(x)-T(y)^*||T(y)|}
\end{equation}
writing $x^*=\frac{x}{|x|^2}$. The Green's function verifies: 
\begin{equation*}
\left\lbrace \begin{aligned}
&\D_y G_{\Pi}(x,y)=\d(y-x) \text{ for }x,y\in \Pi \\
&G_{\Pi}(x,y)=0 \text{ for }y\in\G \\
&G_{\Pi}(x,y)=G_{\Pi}(y,x)
\end{aligned}
\right.
\end{equation*}
\begin{remark}\label{T-unique}
In fact, the Green's function is unique, even in the case where $\OM$ is a curve $\G$. Indeed, let $F_1$ and $F_2$ be two biholomorphisms from $\Pi$ to the exterior of the unit disk. Then $F_1\circ F_2^{-1}$ maps $D^c$ to $D^c$ and we can apply the uniqueness result for Riemann mappings to conclude that there exists $a\in \C$ such that $|a|=1$ and $F_1=aF_2$. Moreover, changing $T$ by $aT$ in \eqref{green} does not change the Green's function. The uniqueness of the Green's  function outside the unit disk gives us the result.
\end{remark}

The kernel of the Biot-Savart law is $K_{\Pi}=K_{\Pi}(x,y) := \na_x^\perp G_{\Pi}(x,y)$. With $(x_1,x_2)^\perp=\begin{pmatrix} -x_2 \\ x_1\end{pmatrix}$, the explicit formula of $K_{\Pi}$ is given by 
\begin{equation*}
K_{\Pi}(x,y)=\dfrac{1}{2\pi} DT^t(x)\Bigl(\dfrac{(T(x)-T(y))^\perp}{|T(x)-T(y)|^2}-\dfrac{(T(x)- T(y)^*)^\perp}{|T(x)- T(y)^*|^2}\Bigl).
\end{equation*}
We require information on far-field behavior of $K_{\Pi}$. We will use several times the following general relation:
\begin{equation}
\label{frac}
\Bigl| \frac{a}{|a|^2}-\frac{b}{|b|^2}\Bigl|=\frac{|a-b|}{|a||b|},
\end{equation}
which can be easily checked by squaring both sides.

We now find the following inequality:
\[ |K_{\Pi}(x,y)|\leq C \frac{|T(y)-T(y)^*|}{|T(x)-T(y)||T(x)-T(y)^*|}.\]

For $f\in C_c^\infty({\Pi})$, we introduce the notation
\[K_{\Pi}[f]=K_{\Pi}[f](x):=\int_{\Pi} K_{\Pi}(x,y)f(y)dy,\]
and we have for large $|x|$ that 
\begin{equation}\label{K-inf}
 |K_{\Pi}[f]|(x)\leq \dfrac{C_1}{|x|^2},
 \end{equation}
 where $C_1$ depends on the size of the support of $f$. We have used here Remark \ref{2.5}.

The vector field $u=K_{\Pi}[f]$ is a solution of the elliptic system:
\begin{equation*}
\left\lbrace\begin{aligned}
\diver u &=0 &\text{ in } {\Pi} \\
\curl u &=f &\text{ in } {\Pi} \\
u\cdot \hat{n}&=0 &\text{ on } \pd \OM \\
\lim_{|x|\to\infty}|u|&=0
\end{aligned}\right.
\end{equation*}

Let $\hat{n}$ be the unit normal exterior to $\Pi$. In what follows all contour integrals are taken in the counter-clockwise sense, so that $\int_{\pd \OM} F\cdot \dss=-\int_{\pd \OM} F\cdot \hat{n}^\perp ds$. Then the harmonic vector fields 
\[H_{\Pi} (x)=\frac{1}{2\pi}\na^\perp \ln |T(x)|= \frac{1}{2\pi} DT^t(x)\frac{T(x)^\perp}{|T(x)|^2}\]
is the unique\footnote{see e.g. \cite{ift_lop_euler}.} vector field verifying
\begin{equation*}
\left\lbrace\begin{aligned}
\diver H_{\Pi} &=0 &\text{ in } {\Pi} \\
\curl H_{\Pi} &=0 &\text{ in } {\Pi} \\
H_{\Pi}\cdot \hat{n}&=0 &\text{ on } \pd \OM \\
H_{\Pi} (x)\to 0 &\text{ as }|x|\to\infty\\
\int_{\G} H_{\Pi} \cdot \dss &=1.
\end{aligned}\right.
\end{equation*}
It is also known that $H_{\Pi}(x)=\mathcal{O}(1/|x|)$ at infinity. Therefore, putting together the previous properties, we obtain that the vector field $u^\e$:
\begin{equation}\label{biot}
u^\e(x):= K_\e[\om^\e](x) + (\g+\int \om^\e) H_\e(x)
\end{equation}
is the unique vector fields which verifies 
\begin{equation*}
\left\lbrace\begin{aligned}
\diver u^\e &=0 &\text{ in } {\Pi_\e} \\
\curl u^\e &= \om^\e &\text{ in } {\Pi_\e} \\
u^\e \cdot \hat{n}&=0 &\text{ on } \G_\e \\
u^\e(x)\to 0 &\text{ as }|x|\to\infty\\
\int_{\G_\e} u^\e \cdot \dss &= \g.
\end{aligned}\right.
\end{equation*}
Concerning the uniqueness, we note that Remark \ref{T-unique} allows us to apply the theory developed in \cite{ift_lop_euler} (see Lemma 3.1 therein, which is a consequence of Kelvin's Circulation Theorem). 

\begin{remark}\label{rem : prop-curve}
This property means that the velocity of any weak solution (in the sense of Definition \ref{sol-curve}) can be written as \eqref{biot}. Adding the fact that $\om^\e$ belongs to $L^\infty(L^1\cap L^\infty)$, Proposition \ref{remark_lac_euler} states that \eqref{biot} and the behavior of $T_\e$ (see Proposition \ref{2.2}) imply some interesting properties on the velocity. For example, we infer that $u^\e$ has a jump across $\G_\e$ and blows up near the end-points of the curve as the inverse of the square root of the distance.
\end{remark}

Before working on the convergence $(u^\e,\om^\e)\to (u,\om)$, let us prove that Assumption \ref{assump} is verified for good geometrical convergence $\OM_\y \to \G$, which will complete \cite{lac_euler,lac_NS}.

\subsection{Proof of Assumption \ref{assump}.}\label{assump-proof}\ 

For this subsection, we consider, as Pommerenke in \cite{pomm-1,pomm-2}, that a domain is an open, connected subset of $\C$. We identify also $\C$ and $\R^2$. 

Let $\G$ be an open Jordan arc which verifies the assumptions of Proposition \ref{2.2}. Let $\{\OM_n\}$ be a sequence of bounded, open, connected, simply connected subset of the plane such that $\G \subset \OM_n$ and $\pd \OM_n$ is a smooth Jordan curve. We denote his complementary by $\Pi_n:= \R^2\setminus \overline{\OM_n}$. We recall that $D:= B(0,1)$, and we set $\D:= \R^2 \setminus \overline{D}$. For all $n$, we denote by $T_n$ the unique conformal mapping from $\Pi_n$ to $\D$ which sends $\infty$ to $\infty$, $\pd \OM_n$ to $\pd D$ and such that $T_n'(\infty)\in \R^+_*$ (Riemann mapping theorem). By Remark \ref{T-unique}, we consider the unique  $T$ which maps $\Pi:= \G^c$ to $\D$, such that $T(\infty)=\infty$ and $T'(\infty)\in \R^+_*$. The properties of $T$ are listed in Proposition \ref{2.2}. In order to apply the theory of domain convergence to smooth domains, we work here in the image of $T$, where there is no end-point. By continuity of $T$, we state that $\tilde \Pi_n:= T(\Pi_n)$ is an exterior domain, which means that $\tilde \OM_n := \R^2\setminus \overline{\tilde \Pi_n}$ is a smooth, bounded, open, connected, simply connected subset of the plane, containing $\overline{D}$.

\begin{figure}[ht]
\includegraphics[height=5.5cm]{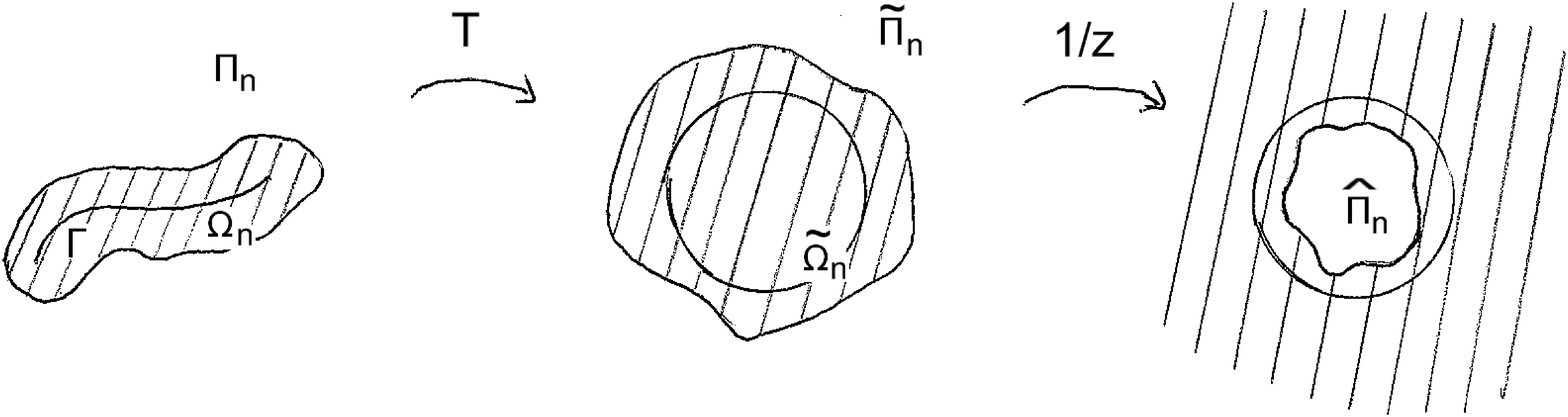}
 \end{figure}

\vspace{-1.5cm}
\centerline{{\bf PICTURE 1}: image of $T$.}

\vspace{12pt}

By the Riemann mapping theorem, we have that $g_n:= T_n\circ T^{-1}$ is the unique univalent function (meromorphic and injective) mapping $\tilde \Pi_n$ to $\D$ and satisfying $g_n(\infty)=\infty$, arg $g_n'(\infty)=0$. As $g_n(\infty)=\infty$, it means that $g_n$ is analytic in $\D$. To apply directly some results from \cite{pomm-1}, we also introduce Riemann mappings in bounded domains:
\[f_n(z):= \frac{1}{g_n(1/z)},\]
which maps $\hat \Pi_n:= 1/T(\Pi_n)$ to $D$, verifying $f_n(0)=0$ and arg $f_n'(0)=0$ (see Picture 1).

A convergence of $\OM_n$ to $\G$ can be translated by a convergence of $\tilde \OM_n$ to $D$. The goal of this subsection is to define the geometric convergence in order that the properties cited in Assumption \ref{assump} are verified. In other word, we have already an example of an obstacle family where they are verified:  
\[ \tilde \OM_n:= B(0,1+\frac{1}n),\]
(see \eqref{T_eps_eta}), and the issue here is to find more families where the results of \cite{lac_euler,lac_NS} hold.

Concerning univalent functions, the first convergence of domain was introduced by Carath\'eodory in 1912.
\begin{definition}
Let $w_0\in \C$ be given and let $G_n$ be domains with $w_0\in G_n\subset \C$. We say that
$G_n\to G$ as $n\to \infty$ with respect to $w_0$, in the sense of {\it kernel convergence} if
\begin{itemize}
\item[(i)] either $G=\{ w_0\}$, or $G$ is a domain $\neq \C$ with $w_0\in G$ such that some neighborhoods of every $w\in G$ lie in $G_n$ for large $n$;
\item[(ii)] for $w\in \pd G$ there exist $w_n\in \pd G_n$ such that $w_n\to w$ as $n\to \infty$.
\end{itemize}
\end{definition}
It is clear that the limit is unique. This notion is very general, and it follows the Carath\'eodory kernel theorem: 
\begin{theorem} Let the functions $h_n$ be analytic and univalent in $D$, and let $h_n(0)=0$, $h'_n(0)>0$, $H_n = h_n(D)$. Then $\{ h_n \}$ converges locally uniformly in $D$ if and only if $\{H_n\}$ converges to its kernel $H$ and if $H\neq\C$. Furthermore, the limit function maps $D$ onto $H$.
\end{theorem}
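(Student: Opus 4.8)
The plan is to prove the two implications separately, leaning on three standard facts about univalent functions throughout: Hurwitz's theorem (a locally uniform limit of univalent functions is univalent or constant), the Koebe one-quarter theorem ($B(0,\tfrac14|f'(0)|)\subset f(D)$ when $f$ is univalent with $f(0)=0$), and the growth estimate $|f(z)|\le |f'(0)|\,|z|/(1-|z|)^2$ for such $f$; together with Montel's theorem, Rouché's theorem, the open mapping theorem, and the uniqueness part of the Riemann mapping theorem. I will use the equivalent formulation of kernel convergence: $H_n\to H$ with respect to $0$ means $H$ is the maximal domain containing $0$ all of whose compact subsets lie in $H_n$ for all large $n$ (with $H=\{0\}$ if no such domain exists), and the same holds for every subsequence. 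Two alternatives must be tracked in tandem: the limit function is constant ($\equiv 0$) or univalent, and the kernel is $\{0\}$ or a genuine domain.

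\textbf{Only if.} Suppose $h_n\to h$ locally uniformly. By Hurwitz $h$ is univalent or constant; since $h(0)=0$ and $h_n'(0)>0$, we have $h'(0)=\lim h_n'(0)\ge 0$. If $h$ is constant then $h\equiv 0$, $h'(0)=0$, and the kernel is $\{0\}$: were $B(0,\rho)\subset H_n$ for all large $n$, then $h_n^{-1}$ rescaled to a self-map of $D$ fixing $0$ would give, by the Schwarz lemma, $h_n'(0)\ge\rho>0$, contradicting $h_n'(0)\to0$. If $h$ is univalent then $h'(0)>0$; put $H=h(D)$ and verify $H$ is the kernel. For a compact $E\subset H$ with $F:=h^{-1}(E)\subset D_r:=B(0,r)$, the number $\delta:=\operatorname{dist}(h(F),h(\partial D_r))$ is positive, so for $n$ large $|h_n-h|<\delta$ on $\partial D_r$, and Rouché gives that $h_n(\cdot)-w$ has exactly one zero in $D_r$ for each $w\in E$; hence $E\subset H_n$ eventually, so $H$ lies in the kernel. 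Conversely, suppose some $w_1\in\partial H$ belonged to the kernel; the kernel being open, connected and containing $0$, one finds a bounded connected open $E'$ with $0\in E'$, $w_1\in E'$, and $\overline{E'}$ contained in the kernel. On $E'$ the maps $h_n^{-1}$ are univalent and bounded by $1$, hence normal; a subsequential limit $\phi$ is univalent (its derivative at $0$ is $1/h'(0)\neq 0$), maps $E'$ into $D$ by the open mapping theorem, satisfies $\phi(0)=0$, and equals $h^{-1}$ on $H\cap E'$ since $h_n^{-1}\to h^{-1}$ locally uniformly on $H$. But approaching $w_1$ from within $H$ forces $h^{-1}\to\partial D$ (else a subsequence of preimages would converge in $D$ to a point sent by $h$ to $w_1\notin H$), contradicting continuity of $\phi$ at $w_1\in E'$. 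Thus the kernel is exactly $H$ (resp. $\{0\}$), and $H\neq\C$ because otherwise $h^{-1}\colon\C\to D$ would be a bounded nonconstant entire function. The argument used only $h_n\to h$, so it applies to every subsequence: $H_n\to H$ in the sense of kernel convergence.

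\textbf{If.} Suppose $H_n\to H$ in the sense of kernel convergence, $H\neq\C$. If $H=\{0\}$: then $h_n'(0)\to0$, for otherwise Koebe's theorem would place a fixed disk $B(0,\rho)$ in $H_{n_k}$ for all large $k$ along a subsequence, making the kernel of that subsequence strictly larger than $\{0\}$; the growth estimate then yields $h_n\to0$ locally uniformly, and the constant limit maps $D$ onto $\{0\}=H$. If $H$ is a domain: first $\sup_n h_n'(0)<\infty$, since $h_{n_k}'(0)\to\infty$ would, via Koebe's theorem, force every $B(0,\rho)$ eventually into $H_{n_k}$, i.e. the kernel of that subsequence to be $\C$; the growth estimate then makes $\{h_n\}$ locally bounded, hence normal by Montel. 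Also $\liminf_n h_n'(0)>0$, since $h_{n_k}'(0)\to0$ would give $h_{n_k}\to0$ (growth estimate), whence by the "only if" part the kernel of $\{H_{n_k}\}$ would be $\{0\}\neq H$. Now any locally uniform limit $g$ of a subsequence of $\{h_n\}$ satisfies $g(0)=0$, $g'(0)>0$, so $g$ is univalent, and by the "only if" part $g(D)$ equals the kernel of that subsequence, namely $H$; by uniqueness of the Riemann map, $g$ is the unique conformal map of $D$ onto $H$ with $g(0)=0$, $g'(0)>0$, hence independent of the subsequence. Therefore $h_n$ converges locally uniformly to this $h$, and $h(D)=H$.

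\textbf{Main obstacle.} The delicate point is the reverse inclusion in the "only if" direction: ruling out that the kernel is strictly larger than $h(D)$, i.e. that the $H_n$ "capture" a boundary point of $h(D)$. This is precisely where normality of the inverse maps $h_n^{-1}$ and the behaviour $h^{-1}\to\partial D$ near $\partial H$ enter. Everything else is bookkeeping with the Koebe estimates, provided one consistently maintains the constant/univalent and $\{0\}$/domain dichotomies, since mishandling the degenerate cases is the easiest way to go wrong.
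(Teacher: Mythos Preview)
The paper does not prove this statement: it is quoted as the classical Carath\'eodory kernel theorem and attributed to Pommerenke's book, with no proof given. Your argument is correct and is essentially the standard proof one finds in the literature (Hurwitz, Koebe one-quarter and growth estimates, Montel normality, Rouch\'e for the inclusion $H\subset\ker$, and a normality argument on the inverses $h_n^{-1}$ for the reverse inclusion, followed by uniqueness of the Riemann map).

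One small point worth tightening: when you pass from ``the kernel strictly contains $H$'' to ``some $w_1\in\partial H$ lies in the kernel'', you are implicitly using that the kernel is connected and open and contains $H$, so a path in the kernel from $0\in H$ to a point outside $H$ must meet $\partial H$; you might state this explicitly. Likewise, the fact that $h_n^{-1}\to h^{-1}$ locally uniformly on $H$, which you invoke, follows from the same Rouch\'e argument you used for the forward inclusion; a one-line reference back to it would make the logic airtight. Neither of these is a gap, just presentation.
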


We remark that this theorem does not concern uniform convergence of biholomorphism up to the boundary. Indeed, we need such a property for points (i), (ii), (iii) in Assumption \ref{assump}. Moreover, convergence in the kernel sense allows some strange examples of families, as a fold on the boundary (see Picture 2). It seems unbelievable that $\{T_n\}$ verifies Assumption \ref{assump} near the boundary.

\begin{figure}[ht]
\includegraphics[height=4cm]{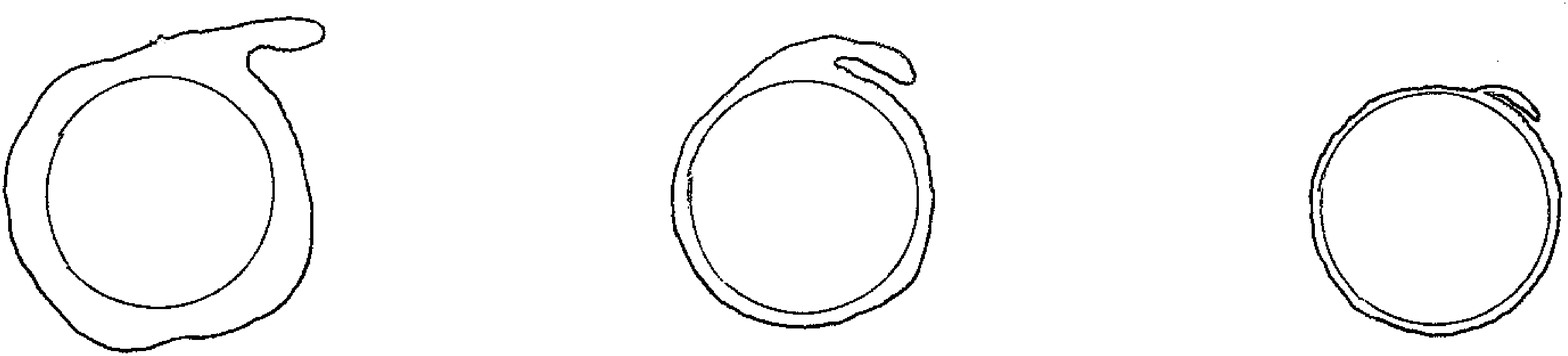}
 \end{figure}

\vspace{-1cm}
\centerline{{\bf PICTURE 2}: example of a family which converges in the kernel sense.}

\vspace{12pt}

To prevent such a case, we add another definition.

\begin{definition} A sequence $\{A_n\}$ of compact sets in $\C$ is called {\it uniformly locally connected} iff for every $\e>0$, there exists $\d >0$ (independent of $n$) such that, if $a_n,b_n\in A_n$ and $|a_n-b_n|<\d$, then we can find  connected compact sets $B_n$ with
\[a_n,b_n\in B_n \subset A_n, \ \textrm{diam } B_n < \e, \ \forall n.\]
\end{definition}

This kind of definition does not allow the case of Picture 2. However, the examples of Pictures 3 and 4 are authorized, which are classical shapes in rugosity theory.

\begin{figure}[ht]

                \begin{minipage}[t]{6cm}
\hspace{1cm}
\includegraphics[height=5cm]{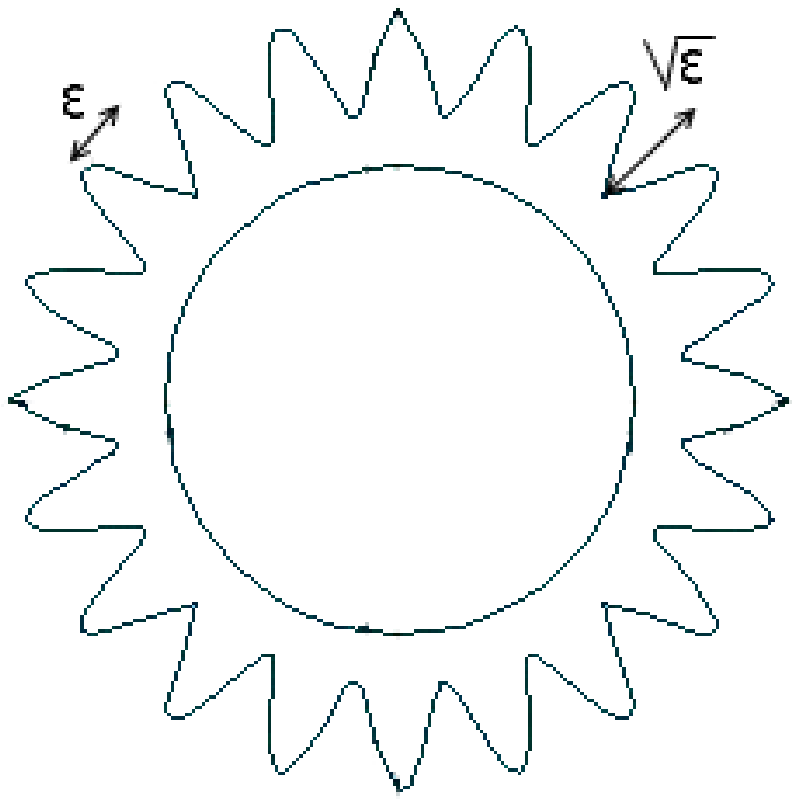}

\centerline{\ \hspace{1.2cm}{\bf PICTURE 3}}   
                \end{minipage}
                \hfill
                \begin{minipage}[t]{6cm}
\hspace{-1cm}
\includegraphics[height=5cm]{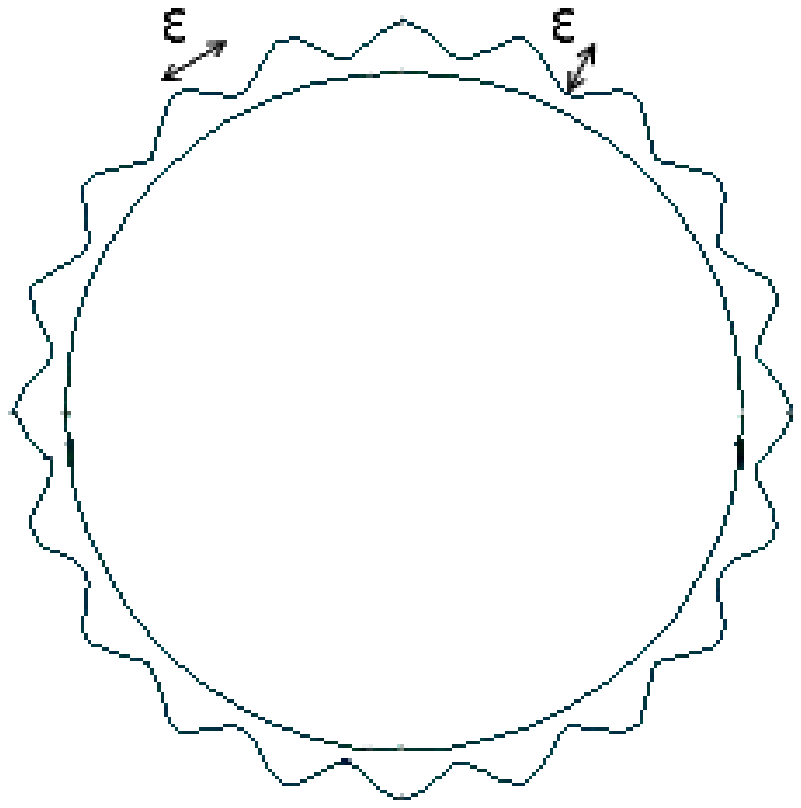}

\centerline{{\bf PICTURE 4}\hspace{2.5cm}\ }
                \end{minipage}
                \end{figure}

Thanks to this definition, we can cited Theorem 9.11 of \cite{pomm-1}, which extends to the uniform convergence up to the boundary.
\begin{theorem}\label{theo 9.11}
Let the functions $g_n$ and $g$ be univalent in $\D$ and continuous in $\overline{\D}$, and let $g_n(\infty)=\infty$ and $E_n=\C\setminus g_n(\D)$. Suppose that $g_n\to g$ locally uniformly in $\D$. Then this convergence is uniform in $\overline{\D}$ if and only if the sequence $(E_n)$ is uniformly locally connected.
\end{theorem}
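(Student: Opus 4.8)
The plan is to factor the equivalence through the purely quantitative statement
\[
\{g_n\}\ \text{is equicontinuous on}\ \overline{\D}
\qquad\Longleftrightarrow\qquad
(E_n)\ \text{is uniformly locally connected}, \qquad(\star)
\]
and to deduce each half of the theorem from $(\star)$ by soft arguments. Writing $\om_f(\d):=\sup\{|f(z)-f(z')|:z,z'\in\overline{\D},\ |z-z'|\le\d\}$, uniform convergence $g_n\to g$ on $\overline{\D}$ gives $\om_{g_n}(\d)\le 2\|g_n-g\|_{L^\infty(\overline{\D})}+\om_g(\d)$, hence $\sup_n\om_{g_n}(\d)\to0$ as $\d\to0$ (using the uniform continuity of $g$ and of the finitely many exceptional $g_n$): so uniform convergence implies equicontinuity. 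Conversely, if $\{g_n\}$ is equicontinuous and $g_n\to g$ merely locally uniformly in $\D$, then by Arzel\`a--Ascoli $\{g_n\}$ is relatively compact in $C(\overline{\D},\widehat{\C})$, every uniform limit point restricts to $g$ on the dense set $\D$ and hence coincides with the continuous extension of $g$, so $g_n\to g$ uniformly on $\overline{\D}$. Thus the theorem reduces to proving $(\star)$. Here the hypothesis $g_n\to g$ is used only to keep the family non-degenerate: local uniform convergence forces $g_n(z)=\b_n z+\g_n+O(1/|z|)$ near $\infty$ with $\b_n\to\b>0$ and $\g_n\to\g$, so the obstacles $E_n$ (of logarithmic capacity $\b_n$) stay in a fixed disc and the images $g_n(\{1<|z|<2\})$ stay in a fixed disc $B(0,M)$. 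Without such a normalisation $(\star)$ is false (take $g_n(z)=nz$, $E_n=\overline{B(0,n)}$: uniformly locally connected but not equicontinuous), and this uniform boundedness near $S=\pd D$ is the only use I would make of the convergence hypothesis in proving $(\star)$.

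The substantial implication of $(\star)$ is ``$(E_n)$ uniformly locally connected $\Rightarrow$ $\{g_n\}$ equicontinuous'', a uniform version of Carath\'eodory's continuity theorem, for which I would use the length--area method. Fix $z_1,z_2\in S$ with $|z_1-z_2|=\r$, $\r$ small, and for $t\in(\r,\sqrt{\r})$ let $C_t:=\{z:|z-z_1|=t\}\cap\D$, a crosscut of $\D$. By Cauchy--Schwarz and the area identity,
\[
\int_{\r}^{\sqrt{\r}}\big(\mathrm{length}\,g_n(C_t)\big)^2\,\frac{dt}{t}\ \le\ 2\pi\,\mathrm{Area}\,g_n\big(\{\r<|z-z_1|<\sqrt{\r}\}\cap\D\big)\ \le\ 2\pi\,\mathrm{Area}\,B(0,M),
\]
so some $t_n\in(\r,\sqrt{\r})$ gives $\mathrm{diam}\,g_n(C_{t_n})\le\mathrm{length}\,g_n(C_{t_n})\le C_M\,(\log(1/\sqrt{\r}))^{-1/2}=:\e_1(\r)\to0$, uniformly in $n$. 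The arc $\s_n:=g_n(C_{t_n})$ is a crosscut of $G_n:=g_n(\D)$, cutting it into the piece $P_n:=g_n(V_n)$ (with $V_n:=\{z\in\D:|z-z_1|<t_n\}$) and the piece $G_n':=g_n(\D\setminus\overline{V_n})$, which is connected and contains $\infty$. Since $z_1,z_2\in\overline{V_n}$ we have $g_n(z_1),g_n(z_2)\in\overline{P_n}$, so it remains to bound $\mathrm{diam}\,\overline{P_n}$.

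For that, the two endpoints $p_n,q_n$ of $\s_n$ lie in $\pd G_n\subset E_n$ at distance $\le\e_1(\r)$; uniform local connectedness produces a continuum $B_n\subset E_n$ with $p_n,q_n\in B_n$ and $\mathrm{diam}\,B_n\le\eta(\e_1(\r))$, $\eta(\cdot)\to0$ being the modulus of the definition. Then $K_n:=\overline{\s_n}\cup B_n$ is a continuum of diameter $\le\e_1(\r)+\eta(\e_1(\r))=:\e_2(\r)$, disjoint from $P_n$ (inside $G_n$, while $B_n\subset E_n$, and $\overline{\s_n}\cap P_n=\emptyset$ by injectivity of $g_n$ on $\D$) and from $G_n'$ (same reasons). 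Since $\s_n\subset K_n$ separates $P_n$ from $G_n'$ within $G_n$, a standard separation argument on the sphere --- via the Jordan curve theorem applied to a small Jordan curve extracted from $K_n$, or via Janiszewski's theorem --- shows $P_n$ and $\infty\in G_n'$ lie on opposite sides of $K_n$; since $G_n'$ is unbounded, $\overline{P_n}$ must lie in the bounded complementary region, whence $\mathrm{diam}\,\overline{P_n}\le\mathrm{diam}\,\mathrm{conv}(K_n)=\mathrm{diam}\,K_n\le\e_2(\r)$. As $z_1,z_2$ were arbitrary, $\sup_n\om_{g_n}(\r)\lesssim\e_2(\r)\to0$, i.e. equicontinuity. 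The reverse implication of $(\star)$ is easier: equicontinuity says $g_n$ maps every sub-arc of $S$ of length $\le\d$ onto a set of diameter $\le\e$ with $\d=\d(\e)$ independent of $n$; given $a_n,b_n\in E_n$ with $|a_n-b_n|$ small, one reduces first to $a_n,b_n\in\pd E_n=g_n(S)$ (the bounded complementary components of $g_n(S)$ being small, again by equicontinuity) and then joins them inside $E_n$ by a small continuum built from $g_n$-images of a bounded number of short boundary arcs and short circular crosscuts.

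I expect the main obstacle to be the topological separation step above: one must conclude that the cut-off piece $\overline{P_n}$ is small knowing only that its bounding crosscut $\s_n$ \emph{and} some continuum $B_n\subset E_n$ joining the endpoints of $\s_n$ are small, while excluding the possibility that $B_n$, lying in the obstacle $E_n$, provides a ``long way around'' $\overline{P_n}$. This is settled by using that $\infty\in G_n'$ sits on the far side of $\s_n$ and that $\s_n$ genuinely separates $P_n$ from $G_n'$ within $G_n$, so that the small continuum $K_n$ traps $\overline{P_n}$ in its bounded complementary component. All the analytic ingredients --- the length--area inequality, and the Koebe-type bounds keeping $E_n$ and the near-boundary images in a fixed disc --- are uniform in $n$ by construction, so no additional care is needed there.
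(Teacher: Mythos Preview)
The paper does not prove this theorem: it is quoted verbatim as Theorem~9.11 of Pommerenke's monograph \emph{Univalent Functions} and used as a black box. So there is no ``paper's own proof'' to compare against; your proposal is in fact a proof of a result the author only cites.

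That said, your approach is exactly the classical one that Pommerenke himself uses: reduce the statement to the equivalence ``$(E_n)$ uniformly locally connected $\Leftrightarrow$ $\{g_n\}$ equicontinuous on $\overline{\D}$'', and establish the hard implication by the Wolff length--area lemma together with a Janiszewski-type separation argument to trap the cut-off piece $P_n$ inside a small continuum. Your identification of the topological separation step as the delicate point is accurate, and your handling of it (build $K_n=\overline{\s_n}\cup B_n$, argue $P_n$ lies in its bounded complementary component) is the standard resolution.

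Two minor points worth tightening. First, you carry out the length--area estimate only for $z_1,z_2\in S$, then claim control of the full modulus $\om_{g_n}$ over $\overline{\D}$; you should say a word about why interior points near $S$ are swallowed by the same argument (they lie in $V_n$, hence their images in $\overline{P_n}$), while points away from $S$ are handled by the local-uniform convergence on compacta. Second, in the easy direction of~$(\star)$ you write $\pd E_n=g_n(S)$; this need not be an equality a priori since $g_n$ is only assumed continuous, not injective, on $S$. The reduction still works, but one should phrase it via $\pd E_n\subset g_n(S)$ and the fact that each bounded complementary component of $g_n(S)$ has small diameter by equicontinuity.
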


We apply these theorems to obtain a part of the properties of Assumption \ref{assump}.

\begin{proposition} \label{prop-1}
Let us assume that there exists $R_0>1$ such that $\tilde \OM_n \subset B(0,R_0)$ for all $n$. If $\tilde \Pi_n \to \D$ in the sense of kernel convergence with respect to $\infty$, such that $\{ \tilde \OM_n \}$ is uniformly locally connected, then there exists $R_1>0$ such that
\begin{itemize}
\item[(a)] $\|(g_n(z)- z)/|z|\|_{L^\infty(\tilde \Pi_n)}\to 0$ as $n \to \infty$,
\item[(b)] $(g_n^{-1})'$ is bounded on $B(0,R_1)^c$ independently of $n$,
\item[(c)] $\|g_n'(z) - 1\|_{L^\infty(B(0,R_1)^c)}\to 0$ as $n \to \infty$,
\item[(d)] there exists  $C>0$ such that $|g_n'' (z)|\leq \frac{C}{|z|}$ on $B(0,R_1)^c$.
\end{itemize}
\end{proposition}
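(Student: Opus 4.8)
The plan is to deduce part (a) from locally uniform convergence of the conformal maps in the interior, obtained from Carath\'eodory's kernel theorem and then upgraded to the boundary by Theorem~\ref{theo 9.11} (this is exactly where uniform local connectedness enters), and to deduce parts (b)--(d) from the classical behaviour of univalent maps near infinity (the area theorem) together with that interior convergence. First I would translate the hypothesis: the inversion $z\mapsto1/z$ is a homeomorphism of the sphere carrying $\infty$ to $0$ and $\D$ onto $D$, and it preserves kernel convergence, so $\tilde\Pi_n\to\D$ with respect to $\infty$ becomes $\hat\Pi_n:=1/T(\Pi_n)\to D$ with respect to $0$. Applying the Carath\'eodory kernel theorem to $h_n:=f_n^{-1}\colon D\to\hat\Pi_n$ (for which $h_n(0)=0$, $h_n'(0)=1/f_n'(0)>0$, $h_n(D)=\hat\Pi_n$, and the kernel $D$ is a domain $\neq\C$) gives $h_n\to\mathrm{id}$ locally uniformly in $D$; hence $f_n\to\mathrm{id}$ locally uniformly in $D$, and therefore $g_n(w)=1/f_n(1/w)$ and $\psi_n:=g_n^{-1}$ both tend to the identity locally uniformly in $\D$. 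Cauchy's estimate then gives $h_n'(0)\to1$, i.e.\ $\b_n:=g_n'(\infty)=1/f_n'(0)\to1$; moreover, since $\overline D\subset\overline{\tilde\OM_n}\subset\overline{B(0,R_0)}$ and the logarithmic capacity of $\C\setminus\psi_n(\D)$ equals $|\psi_n'(\infty)|$, one gets $1\le1/\b_n=\mathrm{cap}(\overline{\tilde\OM_n})\le R_0$, so $\b_n\in[1/R_0,1]$ for all $n$.

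For part (a), the $\psi_n$ are univalent in $\D$, extend continuously to $\overline{\D}$ because $\pd\tilde\Pi_n$ is a smooth Jordan curve, fix $\infty$, and have complements $E_n=\C\setminus\psi_n(\D)=\overline{\tilde\OM_n}\subset B(0,R_0)$, which are uniformly locally connected by hypothesis. Since $\psi_n\to\mathrm{id}$ locally uniformly in $\D$, Theorem~\ref{theo 9.11} upgrades this to uniform convergence in $\overline{\D}$. To extract (a), for $z\in\tilde\Pi_n\subset\D$ put $w=g_n(z)\in\D$, so $z=\psi_n(w)$ with $|z|=|\psi_n(w)|\ge1$ and $(g_n(z)-z)/|z|=(w-\psi_n(w))/|\psi_n(w)|$; renormalising $\psi_n$ by $\b_n$ (so that $\b_n\psi_n$ has derivative $1$ at $\infty$, for which uniform convergence in $\overline{\D}$ literally reads $\sup_{|w|\ge1}|\b_n\psi_n(w)-w|\to0$) and using $\b_n\to1$ together with $|\psi_n(w)|\ge1$ gives $\|(g_n(z)-z)/|z|\|_{L^\infty(\tilde\Pi_n)}\to0$.

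For parts (b), (c), (d), I would take $R_1:=2R_0$, so that $\{|z|>R_0\}\subset\tilde\Pi_n$ for every $n$ and $g_n,\psi_n$ have convergent Laurent expansions at $\infty$ on $\{|z|\ge R_1\}$. Bringing $\psi_n$ (resp.\ the restriction of $g_n$ to $\{|z|>R_0\}$, rescaled by $R_0$) to the standard form $z+\sum_{k\ge1}b_kz^{-k}$ univalent on $\{|z|>1\}$ and invoking the area theorem $\sum_{k\ge1}k|b_k|^2\le1$ yields the $n$-uniform bounds $|(g_n^{-1})'(z)|\le C$, $|g_n'(z)-\b_n|\le C_{R_0}/|z|^2$ and $|g_n''(z)|\le C_{R_0}/|z|^3$ for $|z|\ge R_1$, where $\b_n\in[1/R_0,1]$ is used. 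The first bound is (b); since $1/|z|^3\le1/|z|$ on $\{|z|\ge R_1\}$, the third is (d). For (c), I would combine $|g_n'(z)-1|\le C_{R_0}/|z|^2+|\b_n-1|$ for $|z|\ge R_1$ with the fact that $g_n'\to1$ uniformly on the compact annulus $\{R_1\le|z|\le M\}\subset\D$ (from the interior convergence above and Cauchy's estimate): given $\e>0$, choose $M$ with $C_{R_0}/M^2<\e/2$, and then for $n$ large enough $\|g_n'-1\|_{L^\infty(B(0,R_1)^c)}<\e$ by also using $\b_n\to1$, which is (c).

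The crux is part (a): locally uniform convergence in the interior says nothing about the behaviour of $g_n$ near $\pd D$, and without a geometric restriction one could have folds of $\pd\tilde\OM_n$ accumulating on the unit circle (Picture~2), in which case (a) fails --- so the real content is that uniform local connectedness of $\{\tilde\OM_n\}$ is precisely the condition under which Theorem~\ref{theo 9.11} applies. A secondary, recurring nuisance in the arguments for (a) and (c) is that the normalisation $\b_n=g_n'(\infty)$ is not identically $1$, so the crude quantity $\sup_{\tilde\Pi_n}|g_n(z)-z|$ is useless (it is infinite) and one must keep the weight $1/|z|$ and the convergence $\b_n\to1$ in play throughout.
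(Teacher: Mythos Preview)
Your approach is correct and, for parts (b)--(d), genuinely different from the paper's. The paper works entirely through the bounded-domain avatars $f_n$ and $f_n^{-1}$: it obtains $f_n^{-1}\to\mathrm{Id}$ uniformly on $\overline{D}$ via Theorem~\ref{theo 9.11} (transported by inversion), upgrades to $f_n\to\mathrm{Id}$ uniformly on $\overline{\hat\Pi_n}$, and then extracts (b)--(d) by applying Cauchy's integral formula to $f_n$ and $f_n^{-1}$ on small disks about the origin and differentiating the explicit relations $g_n'(z)=f_n'(1/z)/(zf_n(1/z))^2$ and $g_n''(z)=-\bigl(f_n''(1/z)+2f_n'(1/z)(zf_n(1/z)-f_n'(1/z))\bigr)/\bigl(z^4 f_n(1/z)^3\bigr)$. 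Your route via the area theorem on $\psi_n$ and on $g_n|_{\{|z|>R_0\}}$ is more direct, yields the sharper decay $|g_n'(z)-\beta_n|\le C_{R_0}/|z|^2$ and $|g_n''(z)|\le C_{R_0}/|z|^3$, and makes transparent that (b) and (d) need only the uniform containment $\tilde\Omega_n\subset B(0,R_0)$ (no convergence of domains at all).

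For (a) the two proofs share the key step (Theorem~\ref{theo 9.11}) but diverge in how the behaviour at infinity is controlled. The paper splits into $\tilde\Pi_n\cap B(0,3R_0)$ and $B(0,2R_0)^c$: on the first it uses $f_n\to\mathrm{Id}$ uniformly, and on the second it proves $f_n(z)/z\to 1$ uniformly near $0$ by Cauchy's formula on a fixed circle. Your substitution $z=\psi_n(w)$ together with the claim $\sup_{|w|\ge 1}|\beta_n\psi_n(w)-w|\to 0$ is cleaner, but the phrase ``literally reads'' hides a short argument: Theorem~\ref{theo 9.11} (via inversion) gives $f_n^{-1}\to\mathrm{Id}$ uniformly on $\overline{D}$, and since $F_n(u):=\beta_n/f_n^{-1}(u)-1/u$ extends holomorphically across $u=0$, the maximum principle reduces $\sup_{\overline{D}}|F_n|$ to $\sup_{|u|=1}|F_n|$, where $|f_n^{-1}(u)|\ge 1/R_0$ (because $\partial\hat\Pi_n\subset\{1/R_0\le|w|<1\}$) yields $|F_n(u)|\le R_0(|\beta_n-1|+\|f_n^{-1}-\mathrm{Id}\|_{L^\infty(\partial D)})\to 0$. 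With this filled in, your (a) is complete.
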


\begin{proof}
After noting that $\hat \Pi_n \to D$ in the kernel sense with respect to $0$, we apply the Carath\'eodory kernel theorem to $f_n^{-1}$. Riemann mappings are biholomorphisms, then they are univalents and analytics. We deduce that $f_n^{-1}$ converges uniformly to Id  in each compact subset of $D$. As $g_n^{-1}(z) =1/f_n^{-1}(1/z)$, we also know that $g_n^{-1}$ converges locally uniformly in $\D$. Moreover, Kellogg-Warschawski Theorem (see Theorem 3.6 of \cite{pomm-2}) allows us to state that $g_n^{-1}$ is continuous in $\overline{\D}$ because the boundary $\pd \tilde \OM_n$ is smooth. Therefore, we use Theorem \ref{theo 9.11} to state that $g_n^{-1}$ converges uniformly to Id in $\overline{\D}$. In particularly, $g_n^{-1}$ converges uniformly to Id in $B(0,2)\setminus D$, which means that $f_n^{-1}$ tends to Id uniformly in $\overline{D}\setminus B(0,1/2)$. Adding the fact that $f_n^{-1}\to$Id uniformly in $\overline{B(0,3/4)}$, we obtain that this convergence is uniformly in $\overline{D}$.

A consequence of this uniform convergence, is that $f_n\to$Id uniformly. Indeed, for all $z\in \hat \Pi_n$, we have
\[ |f_n(z) - z|= |y_n - f_n^{-1}(y_n)|\leq \| \textrm{Id} - f_n^{-1}\|_{L^\infty (\overline{D})},\]
which means that $\|f_n - \textrm{Id} \|_{L^\infty (\hat \Pi_n)}\to 0$ as $n\to \infty$. Let us prove that it follows Point (a). Near the boundary, we easily compute that
\[ \Bigl\| \frac{|g_n(z)- z|}{|z|}\Bigl\|_{L^\infty(\tilde \Pi_n\cap B(0,3R_0))} =  \Bigl\| \frac{|\frac1z- f_n(\frac1z)|}{|f_n(\frac1z)|}\Bigl\|_{L^\infty(\tilde \Pi_n\cap B(0,3R_0))} =  \Bigl\| \frac{|y-f_n(y)|}{|f_n(y) |}\Bigl\|_{L^\infty(\hat \Pi_n \setminus B(0,\frac{1}{3 R_0}))}.\]
As, $\|f_n - \textrm{Id} \|_{L^\infty (\hat \Pi_n)}\to 0$ as $n\to \infty$, it means that there exists $N_1$ such that for all $n>N_1$, we have $|f_n(z)|> \frac{1}{4R_0}$ for all $z\in \hat \Pi_n\setminus B(0,\frac{1}{3R_0})$. Therefore, for all $n>N_1$, we obtain that 
\begin{equation}\label{g-bd}
 \| |g_n(z)- z|/|z|\|_{L^\infty(\tilde \Pi_n\cap B(0,3R_0))} \leq 4R_0 \| \textrm{Id} -f_n \|_{L^\infty(\hat \Pi_n)}.
 \end{equation}
Far the boundary, first we prove that $f_n(z)/z$ converge uniformly to $1$ in $B(0,1/(2R_0))$. As $f_n(0)=0$, we know that the map $z\mapsto f_n(z)/z$ is holomorphic in $D$. After remarking that $1/(2R_0)<1/R_0<1$, then we can write the Cauchy formula to state that:
\begin{eqnarray*}
\forall z\in B(0,1/(2R_0)), \  \frac{f_n(z)}{z} -1 &=& \frac{1}{2\pi i} \oint_{\pd B(0,1/R_0)} \frac{f_n(y)/y-1}{y-z}\, dy\\
\Bigl| \frac{f_n(z)}{z}-1\Bigl| &\leq & 2 R_0^2 \| f_n - \textrm{Id} \|_{L^\infty(\overline{B(0,1/R_0)})},
\end{eqnarray*}
which means that $f_n(z)/z$ converges uniformly to $1$ in $B(0,1/(2R_0))$. Finally, we write that 
\begin{equation}\label{g-inf}
 \| |g_n(z)- z|/|z|\|_{L^\infty( B(0,2R_0)^c)} = \| \frac{y}{f_n(y)} -1 \|_{L^\infty( B(0,\frac{1}{2R_0}))}.
 \end{equation}
Putting together \eqref{g-bd} and \eqref{g-inf}, we end the proof of point (a).

Now, we focus on uniform convergence of derivatives. As $f_n^{-1}$ is holomorphic in $D$, we can write the Cauchy formula for all $z\in B(0,1/2)$
\begin{eqnarray*}
 (f_n^{-1})'(z)-1 &=& \frac{1}{2\pi i} \oint_{\pd B(0,3/4)} \frac{f_n^{-1}(y)-y}{(y-z)^2}\, dy\\
|( f_n^{-1})'(z)-1| &\leq & 16 \| f_n^{-1}-\textrm{Id} \|_{L^\infty(B(0,3/4))}.
\end{eqnarray*}
which gives that $(f_n^{-1})'$ converges also uniformly to $1$ in $B(0,1/2)$. Differentiating $g_n^{-1}(z) = 1/f_n^{-1}(1/z)$, we obtain $(g_n^{-1})'(z)=\frac{(f_n^{-1})'(1/z)}{(zf_n^{-1}(1/z))^2}$. Therefore  $(g_n^{-1})'$ converges uniformly to $1$ in $B(0,2)^c$, which implies point (b).

The other points can be established in the same way. Indeed, $f_n\to $Id uniformly in $B(0,1/R_0)$, then using again the Cauchy formula, we prove that $f_n'\to 1$ uniformly in $B(0,1/(2R_0))$. Using that $(g_n)'(z)=\frac{(f_n)'(1/z)}{(zf_n (1/z))^2}$, we pretend that $g_n'\to 1$ uniformly in $B(0,2R_0)^c$ which implies (c). Finally, we use again the Cauchy formula to get that $f_n'' \to 0$ uniformly in $B(0,1/(2R_0))$. Next, we differentiate once more the relation between $g_n$ and $f_n$, and we compute:
\[ g_n''(z)=-\frac{f_n''(1/z)+ 2 f_n'(1/z)(zf_n(1/z)-f_n'(1/z))}{z^4 (f_n(1/z))^3}.\]
Using all the uniform convergences of $f_n$, $f'_n$ and $f_n''$ in $B(0,1/(2R_0))$, we obtain (d) with $R_1=2 R_0$.

This ends the proof.
\end{proof}

Using the relation, $T_n=g_n \circ T$, it will not be complicated to obtain properties (i), (iv) and (v) of Assumption \ref{assump}. However, seeing points (ii) and (iii), we remark that it misses us uniform convergence of the derivatives up to the boundary. In the previous proof, we obtain uniform convergence of $(f_n^{-1})'$ in all the compact subsets, thanks to the Cauchy formula, finding a curve between the compacts and $\pd D$. Such a proof can not be used to obtain convergence up to the boundary. Therefore, we need to present the results of \cite{war}. In this article, it appears that we can except a uniform convergence of $(f^{-1}_n)'$ only if we assume some convergences of the tangent vectors of $\pd \hat \Pi_n$ to the tangent vector of $\pd D$.

For that, we remark that $C_n:= \pd \hat \Pi_n$ denotes a closed Jordan curve which posses continuously turning tangents. We also put $C:= \pd D$. Let $\t_n(s)$ and $\t$ be their tangent angles, expressed as functions of the arc length. We gives now the assumptions needed to present the main theorem of \cite{war}. We assume that there exists $\e>0$ such that:
\begin{enumerate}
\item $C_n$ is in the $\e$-neighborhood of $C$, i.e. any point of $C_n$ is contained in a circle of radius $\e$ about some points of $C$.
\item If $\D s$ denotes the (shorter) arc of the curve $C_n$ between $w'$ and $w''$, then
\[\frac{\D s}{|w'-w''|}\leq c.\]
\item For any point $w_n\in C_n$, pertaining to the arc length, $0\leq s_n\leq L_n$, there corresponds a point $w\in C$, pertaining to the arc length $\s=\s(s_n)$, such that $|w_n-w|\leq \e$ and that, for suitable choise of the branches,
\[ \sup_{0\leq s_n\leq L_n}  |\t_n(s_n) - \t(\s(s_n))| \leq q\e.\]
\item $\t_n$ is H\"{o}lder continuous, i.e. there exist $k>0$, $\a \in (0,1]$ such that 
\[ |\t_n(s_1)-\t_n(s_2)| \leq k(s_1-s_2)^\a, \forall s_1,s_2 \in [0,L_n].\]
\end{enumerate}
In (3), $L_n$ denotes the total length of $C_n$.

\begin{remark} In \cite{war}, the author also assumes that $C$ and $C_n$ are contained in the ring $0<d\leq |w|\leq D$. This property is directly verified in our case, with $D=1$ and $d= 1-\e$. Moreover, he assumes that $C$ is in an $\e$-neighborhood of $C_n$. Indeed, in general case, point (ii) does not imply it. However, in our case we have $D\subset \tilde \OM_n$, and we obtain such a property.
\end{remark}

\begin{remark}\label{rugo}
We see that point (2) prevents us the convergence as in Picture 3. Indeed, in the case of this picture, we have $\frac{\D s}{|w'-w''|}\approx C/\e$, which tends to $\infty$ as $\e\to 0$. In literature, a curve which verified (2) is called a chord-arc curve, and we will prove that the domains outside this kind of curve are uniformly locally connected.

Moreover, we cannot either consider the case of Picture 4, because of point (3).
\end{remark}

Then we give Theorem IV of \cite{war}:
\begin{theorem} \label{war}
If $C_n$ satisfies hypotheses (1)-(4), then
\[ \sup_{|z|\leq 1 } |(f_n^{-1})'(z)-1|\leq K \e \ln \frac{\pi}{\e}\]
where $K$ depends only on $c$, $k$, $\a$ and $q$.
\end{theorem}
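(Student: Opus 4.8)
The plan is to reduce the claim to a quantitative bound for a Schwarz (harmonic) integral, in the spirit of Warschawski's classical distortion theory for conformal maps onto Jordan domains of bounded boundary rotation. First, since $C_n=\pd\hat\Pi_n$ is a $C^1$ Jordan curve whose tangent angle $\t_n$ is H\"older continuous (hypothesis (4)), the Kellogg--Warschawski theorem (Theorem 3.6 of \cite{pomm-2}) tells us that $f_n^{-1}$ extends to a $C^1$ diffeomorphism of $\overline D$ with $(f_n^{-1})'$ continuous and non-vanishing on $\overline D$; hence $\log(f_n^{-1})'$ is holomorphic in $D$, continuous on $\overline D$, and real at $0$ since $(f_n^{-1})'(0)=1/f_n'(0)>0$. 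On $\pd D$ I would use the classical boundary identity relating the argument of the derivative to the tangent angle: if $s_n(\f)$ denotes the arc length of the point $f_n^{-1}(e^{i\f})$ along $C_n$, then
\[ \arg(f_n^{-1})'(e^{i\f})=\t_n\big(s_n(\f)\big)-\f-\tfrac{\pi}{2}\pmod{2\pi},\]
and the same identity for the comparison map $\mathrm{Id}\colon D\to D$ (with $C=\pd D$, $\t(\f)=\f+\tfrac\pi2$) gives $\arg\mathrm{Id}'\equiv 0$.

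Set $\Theta_n(\f):=\arg(f_n^{-1})'(e^{i\f})$ and split
\[ \Theta_n(\f)=\big[\t_n(s_n(\f))-\t(\s(s_n(\f)))\big]+\big[\t(\s(s_n(\f)))-\t(\f)\big].\]
Hypothesis (3) makes the first bracket $O(q\e)$ uniformly, while the second bracket records the distortion of the boundary correspondence $\f\mapsto\s(s_n(\f))$ away from the identity and is a priori comparable to $\|\Theta_n\|_{L^\infty}$ itself. Controlling both terms with $n$-independent constants is the technical core: hypothesis (1) together with the chord--arc hypothesis (2) (which, as recorded in Remark \ref{rugo}, rules out the cusps of Picture 3) forces the boundary correspondence to stay close to the identity, and the H\"older bound (4) transfers to $\Theta_n$ with constants depending only on $k,\a,c$. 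Because the second bracket itself involves $f_n^{-1}$, the natural route is a bootstrap: a crude closeness estimate fed back through the integral representation below absorbs this term and yields $\|\Theta_n\|_{L^\infty(\pd D)}\le C_1 q\e$ together with a uniform H\"older bound $|\Theta_n(\f_1)-\Theta_n(\f_2)|\le C_2 k|\f_1-\f_2|^\a$.

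Next, since $\log(f_n^{-1})'$ is holomorphic in $D$, continuous on $\overline D$ and real at $0$, it equals the Schwarz integral of its boundary datum $i\Theta_n$,
\[ \log(f_n^{-1})'(z)=\frac{1}{2\pi}\int_0^{2\pi}\frac{e^{i\f}+z}{e^{i\f}-z}\,i\Theta_n(\f)\,d\f,\]
so in particular $\log|(f_n^{-1})'|$ is the harmonic conjugate of $-\Theta_n$. The key quantitative input is then the estimate: if a $2\pi$-periodic $\Theta$ satisfies $\|\Theta\|_\infty\le M$ and $|\Theta(\f_1)-\Theta(\f_2)|\le A|\f_1-\f_2|^\a$, then the integral above is bounded on $\overline D$ by $C\,M\log\frac{\pi}{\d}$ with $\d=(M/A)^{1/\a}$, obtained by splitting the conjugate/Schwarz kernel at scale $\d$ — on $|\f-\f_0|<\d$ one uses the H\"older bound (contribution $O(A\d^\a)=O(M)$) and on $|\f-\f_0|>\d$ one uses the sup bound against $\int_\d^\pi dt/t=\log(\pi/\d)$. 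With $M=C_1 q\e$ and $A,\a$ fixed this gives $\|\log(f_n^{-1})'\|_{L^\infty(\overline D)}\le K\e\log\frac{\pi}{\e}$, and finally $|(f_n^{-1})'(z)-1|=|e^{\log(f_n^{-1})'(z)}-1|\le 2K\e\log\frac{\pi}{\e}$ for $\e$ small, which is the asserted bound after renaming the constant; the dependence of $K$ on $c,k,\a,q$ comes out exactly as stated.

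The step I expect to be the main obstacle is the control of the boundary-correspondence term in the second paragraph: converting the geometric hypotheses (1)--(4) into uniform $L^\infty$ and H\"older bounds for $\Theta_n$ requires breaking the mild circular dependence (the distortion of $f_n^{-1}$ on $\pd D$ is estimated in terms of data that themselves involve $f_n^{-1}$), and this is precisely where the chord--arc condition does the decisive work. Once $\Theta_n$ is pinned down, the remaining singular-integral estimate and the exponentiation are routine.
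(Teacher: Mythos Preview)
The paper does not prove this theorem at all: it is stated there as a direct quotation of Theorem~IV of Warschawski's 1956 paper \cite{war} and is used as a black box in the proof of Theorem~\ref{theo-assump}. So there is no ``paper's own proof'' to compare against; any proof you write is necessarily a reconstruction of Warschawski's argument.

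That said, your outline is recognisably the right skeleton of Warschawski's method: represent $\log(f_n^{-1})'$ as the Schwarz integral of its boundary argument, identify that boundary argument with the tangent-angle defect via the classical relation $\arg(f_n^{-1})'(e^{i\f})=\t_n(s_n(\f))-\f-\tfrac{\pi}{2}$, and then estimate the conjugate-function integral by splitting at a scale dictated by the H\"older and sup bounds. The $\e\log(\pi/\e)$ shape of the answer comes out exactly from this split, as you say.

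Where your write-up is still a sketch rather than a proof is precisely the point you flag yourself: the control of the second bracket $\t(\s(s_n(\f)))-\t(\f)$, i.e.\ the boundary-correspondence distortion. You assert that a ``bootstrap'' closes the implicit circularity and delivers $\|\Theta_n\|_{L^\infty}\le C_1 q\e$ together with a uniform H\"older modulus, but you do not carry this out, and in Warschawski's paper this is the bulk of the work (spread over several lemmas that quantify how hypotheses (1)--(2) pin down the boundary correspondence before one ever looks at the tangent angles). In particular, the chord--arc hypothesis (2) enters not just qualitatively to rule out folds but quantitatively through distortion estimates for the harmonic measure and the arc-length parametrisation; without making that step explicit the argument has a genuine gap. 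If you want to turn this into a self-contained proof, that boundary-correspondence lemma is what you must supply; otherwise, citing \cite{war} as the paper does is the honest route.
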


Putting together Proposition \ref{prop-1} and Theorem \ref{war}, we can prove the main theorem of this subsection:

\begin{theorem}\label{theo-assump}
Let $\G$ be an open Jordan arc which verifies the assumptions of Proposition \ref{2.2}. Let $\{\OM_n\}$ be a sequence of smooth, bounded, open, connected, simply connected subset of the plane such that $\G \subset \OM_n$. For all $n$, we denote by $T_n$ the unique conformal mapping from $\Pi_n$ to $\D$ such that $T_n(\infty)=\infty$ and $T_n'(\infty)\in \R^+_*$. Let $T$ the biholomorphism constructed in Proposition \ref{2.2}, such that  $T'(\infty)\in \R^+_*$. We also denote $\hat \Pi_n:= 1/T(\Pi_n)$.

For all $n$, we assume that $C_n:= \pd \hat \Pi_n$ verifies hypotheses (1)-(4), with $c$, $k$, $\a$ and $q$ independent of $n$, and where $\e_n \to 0$ as $n\to 0$. Then, the family $\{T_n\}$ verifies Assumption \ref{assump}.
\end{theorem}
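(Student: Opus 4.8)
The plan is to exploit the factorisation $T_n=g_n\circ T$ on $\Pi_n\subset\Pi$, where $g_n:=T_n\circ T^{-1}$ is the Riemann map of $\tilde\Pi_n$ onto $\D$ and $f_n$, $\hat\Pi_n=1/\tilde\Pi_n$ are as in Subsection~\ref{assump-proof}. All five properties of Assumption~\ref{assump} will be read off from uniform estimates on $g_n$, $g_n'$, $g_n''$, $(g_n^{-1})'$ together with the behaviour of $T$ recorded in Proposition~\ref{2.2} and Remark~\ref{2.5}. Proposition~\ref{prop-1} supplies the estimate $\|(g_n-\mathrm{Id})/|z|\|_{L^\infty(\tilde\Pi_n)}\to0$ and bounds on $g_n'$, $g_n''$, $(g_n^{-1})'$ outside a fixed ball; the one missing ingredient — uniform control of $g_n'$ (and of $(g_n^{-1})'$) up to $\pd D$ — comes from Theorem~\ref{war}, via the identities relating $g_n$, $f_n$ and $f_n^{-1}$.

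First I would deduce the hypotheses of Proposition~\ref{prop-1} from (1) and (2). Since $\overline D\subset\tilde\OM_n$ forces $\hat\Pi_n\subset D$ and (1) gives $C_n=\pd\hat\Pi_n\subset\{1-\e_n<|w|<1\}$, the domain $\hat\Pi_n$ (which contains $0$) contains $B(0,1-\e_n)$; hence $\tilde\OM_n\subset B(0,1/(1-\e_n))$, giving a uniform $R_0>1$, and $\tilde\Pi_n\to\D$ in the kernel sense with respect to $\infty$ (every compact subset of $\D$ is eventually contained in $\tilde\Pi_n$; for $w\in\pd D$ the first exit point of the ray $\{tw:t\ge1\}$ from $\tilde\OM_n$ lies on $\pd\tilde\OM_n$ and tends to $w$, using $\pd\tilde\OM_n\subset\{1<|z|<1/(1-\e_n)\}$). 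For uniform local connectedness of $\{\overline{\tilde\OM_n}\}$ I would observe that $\pd\tilde\OM_n=1/C_n$ is a chord-arc curve with constant depending only on $c$ (inversion is bi-Lipschitz with absolute constants on the annulus containing all the $C_n$, and (2) is the chord-arc condition on $C_n$), and then argue directly: given $a,b\in\overline{\tilde\OM_n}$ with $|a-b|$ small, if $|a-b|\le\mathrm{dist}(a,\pd\tilde\OM_n)$ or $|a-b|\le\mathrm{dist}(b,\pd\tilde\OM_n)$ the segment $[a,b]$ lies in $\overline{\tilde\OM_n}$; otherwise join $a$ and $b$ to their nearest boundary points $a',b'$ by segments (which lie in $\overline{\tilde\OM_n}$ since the maximal inscribed disks at $a$ and $b$ do) and join $a'$ to $b'$ along the shorter sub-arc of $\pd\tilde\OM_n$, whose diameter is at most the chord-arc constant times $3|a-b|$. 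This yields a $\delta$ depending only on $c$, as required.

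Proposition~\ref{prop-1} then gives (a)--(d) for $g_n$ with some $R_1=2R_0$, and from its proof $\|f_n-\mathrm{Id}\|_{L^\infty(\hat\Pi_n)}\to0$ and $\|f_n^{-1}-\mathrm{Id}\|_{L^\infty(\overline D)}\to0$. Theorem~\ref{war} applies verbatim — its hypotheses are precisely (1)--(4) and $\e_n\ln(\pi/\e_n)\to0$ — and gives $\|(f_n^{-1})'-1\|_{L^\infty(\overline D)}\to0$. Since $f_n'=1/\bigl((f_n^{-1})'\circ f_n\bigr)$ and $f_n$ maps $\hat\Pi_n$ onto $D$, this upgrades to $\|f_n'-1\|_{L^\infty(\hat\Pi_n)}\to0$. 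Plugging this into $g_n'(z)=f_n'(1/z)/(zf_n(1/z))^2$ for $z\in\tilde\Pi_n$ with $|z|\le R_1$, and combining with (c) for $|z|>R_1$, I obtain $\eta_n:=\|g_n'-1\|_{L^\infty(\tilde\Pi_n)}\to0$; the analogous identity $(g_n^{-1})'(z)=(f_n^{-1})'(1/z)/(zf_n^{-1}(1/z))^2$ together with (b) and $\|f_n^{-1}-\mathrm{Id}\|_{L^\infty(\overline D)}\to0$ shows $(g_n^{-1})'$ is bounded on $D^c$ uniformly in $n$.

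Finally I would read off Assumption~\ref{assump}. Property (i) holds because $|T_n-T|/|T|=|(g_n-\mathrm{Id})\circ T|/|T|$ and $T$ maps $\Pi_n$ bijectively onto $\tilde\Pi_n$, so the $L^\infty(\Pi_n)$ norm equals $\|(g_n(z)-z)/|z|\|_{L^\infty(\tilde\Pi_n)}\to0$ by (a). For (ii), $T_n^{-1}=T^{-1}\circ g_n^{-1}$ gives $\det DT_n^{-1}=|(T^{-1})'\circ g_n^{-1}|^2\,|(g_n^{-1})'|^2$, bounded on $D^c$ since $DT^{-1}$ is bounded (Proposition~\ref{2.2}) and $(g_n^{-1})'$ is. For (iii), conformality gives $|DT_n-DT|=|g_n'\circ T-1|\,|DT|\le\eta_n|DT|$, whence $\|DT_n-DT\|_{L^3(B(0,R)\cap\Pi_n)}\le\eta_n\|DT\|_{L^3(B(0,R)\cap\Pi)}\to0$ (Proposition~\ref{2.2}, valid for $p=3<4$). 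For (iv) and (v), Remark~\ref{2.5} gives $|T(x)|\ge c|x|$, $|DT(x)|$ bounded and $|D^2T(x)|=O(|x|^{-3})$ for $|x|$ large (the last by a Cauchy estimate from $h'=O(|x|^{-2})$); then $|T(x)|\ge R_1$ there, and $T_n'=(g_n'\circ T)\,T'$, $T_n''=(g_n''\circ T)(T')^2+(g_n'\circ T)\,T''$ together with (c), (d) give $|DT_n|\le C_R$ and $|D^2T_n|\le C_R/|x|$ on $B(0,R)^c$. (For the finitely many $n$ with $\e_n$ not yet small, each $\tilde\OM_n$ is a fixed smooth domain and the bounds in (ii), (iv), (v) hold individually; one takes the maximum.) I expect the main difficulty to be the geometric lemma of the second paragraph — turning a uniform chord-arc bound on $\{C_n\}$ into uniform local connectedness of $\{\overline{\tilde\OM_n}\}$ while correctly tracking the inversion $w\mapsto1/w$ relating $C_n$, $\hat\Pi_n$, $\tilde\Pi_n$, $\tilde\OM_n$ and verifying the neighbourhood/filling conditions in the definition of kernel convergence — and, secondarily, the derivative transfer of the third paragraph, where Theorem~\ref{war}'s control of $(f_n^{-1})'$ on $\overline D$ must be combined with the already-established uniform convergences $f_n\to\mathrm{Id}$, $f_n^{-1}\to\mathrm{Id}$ to reach uniform control of $g_n'$ and $(g_n^{-1})'$ up to $\pd D$.
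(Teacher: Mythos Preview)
Your proposal is correct and follows essentially the same route as the paper: verify the hypotheses of Proposition~\ref{prop-1}, invoke Theorem~\ref{war} to push the derivative convergence up to the boundary, and then read off Assumption~\ref{assump} from the factorisation $T_n=g_n\circ T$. The only notable differences are cosmetic: you derive $g_n'\to1$ via $f_n'$ whereas the paper goes through $(g_n^{-1})'$ and the identity $g_n'=1/(g_n^{-1})'\circ g_n$, and your treatment of uniform local connectedness (transferring the chord-arc condition through the inversion and handling interior points) is more careful than the paper's one-line sketch, which tacitly identifies condition~(2) on $C_n$ with the required property of $\{\overline{\tilde\OM_n}\}$.
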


\begin{proof} We use as before the conformal mappings $g_n$ and $f_n$. Let us first note that we can apply Proposition \ref{prop-1}. Indeed, the condition (1) implies that $\hat \Pi_n \to D$ in the kernel sense with respect to $0$, which means that $\tilde \Pi_n \to \D$ in the kernel sense with respect to $\infty$. Moreover, choosing $R_0=1+\sup_n \e_n$, we also see that $\tilde \OM_n \subset B(0,R_0)$ for all $n$. Finally, we can easily check that the condition of uniformly locally connected comes from the condition (2): for every $\e$, we choose $\d =  \e/c$. Indeed, for all $a_n$ $b_n$ such that $|a_n-b_n|<\d$, (2) means that there exists $B_n\subset \tilde \OM_n$ with $a_n,b_n\in B_n$ and diam$B_n\leq c |a_n-b_n|< \e$.

Therefore, we use directly Proposition \ref{prop-1}. Theorem \ref{war} states that the convergence of $(f_n^{-1})'$ to $1$ is uniform in $\overline{D}$. Thanks to the relation between $(f_n^{-1})'$ and $(g_n^{-1})'$ (see the proof of Proposition \ref{prop-1}, it means that $(g_n^{-1})'\to 1$ uniformly in $\overline{\D}$. Adding that $g_n'(z)=1/(g_n^{-1})'(g_n(z))$, we conclude that Theorem \ref{war} allows us to extend (b) and (c) up to the boundary, i.e.
\begin{itemize}
\item[(b')] $(g_n^{-1})'$ is bounded on $\D$ independently of $n$,
\item[(c')] $\|g_n'(z) - 1\|_{L^\infty(\tilde \Pi_n)}\to 0$ as $n \to \infty$.
\end{itemize}

In order to finish this proof, we just have to compose by $T$. As $T_n=g_n\circ T$, it is obvious that
\[ \Bigl\| \frac{T_n-T}{|T|}\Bigl\|_{L^\infty(\Pi_n)} = \Bigl\| \frac{g_n-\textrm{Id}}{|\textrm{Id}|}\Bigl\|_{L^\infty(\tilde \Pi_n)}\]
which tends to zero as $n\to \infty$ by (a) of Proposition \ref{prop-1}. This proves Point (i) of Assumption \ref{assump}.

Differentiating the relation between $T_n^{-1}$ and $g_n^{-1}$, we obtain that 
\[(T_n^{-1})'(z)= (T^{-1})'(g_n^{-1}(z)) (g_n^{-1})' (z).\]
Then, it is easy to conclude that (ii) follows from (b') and Proposition \ref{2.2}. 
 
 For any $R>0$, 
 \begin{eqnarray*}
  \| T'_n-T' \|_{L^3(B(0,R)\cap \Pi_n)} &=&  \| g'_n(T(z)) T'(z) -T'(z) \|_{L^3(B(0,R)\cap \Pi_n)}\\
  &\leq& \|g'_n(z)-1\|_{L^\infty(\tilde \Pi_n)}  \| T' \|_{L^3(B(0,R)\cap \Pi_n)}
  \end{eqnarray*}
  which proves (iii), thanks to (c') and Proposition \ref{2.2}.
  
Differentiating once the relation between $T_n$ and $g_n$, and using (c) and Proposition \ref{2.2}, we obtain (iv). Differentiating once more, we have
\[ T''_n(z)=g_n''(T(z)) (T'(z))^2+g_n'(T(z)) T''(z).\]
Using (d), Proposition \ref{2.2}, (c) and Remark \ref{2.5}, we finally get (v), which ends the proof.
\end{proof}

\begin{remark} We easily see at the end of the previous proof that we can prove that:
\[\text{for any $R>0$, $p<4$, $\|DT_n - DT\|_{L^p(B(0,R)\cap \Pi_n)}\to 0$ as $n \to \infty$.}\]
\end{remark}

Theorem \ref{main 2} follows from Theorem \ref{theo-assump} and \cite{lac_euler} (see Subsection \ref{thicken} for the details).

\subsubsection*{Comment on this domain convergence}

This theorem completes \cite{lac_euler,lac_NS} in the following sense: if $C_n:= \pd T( \OM_n)$ verifies hypotheses (1)-(4), then solutions of the Euler equations (respectively Navier-Stokes) in the exterior of $\OM_n$ converge to the solution of the Euler equations (respectively Navier-Stokes) outside the curve. To make this result more attractive, it should be better to give geometric properties on $\pd \OM_n$ instead of $\pd T( \OM_n)$, but this translation is not so easy. Of course, by continuity of $T$, condition (1) can be assumed on $\pd \OM_n$, which is not the case for (2)-(4), where we give some properties of the tangent vectors. Conditions (2)-(4), prevent the pointed parts as in Picture 2. However, studying the example in \eqref{T_eps_eta}, we see that there is a pointed part near the end-point $-1$, which is mapped by $T$ to the circle $B(0,1+\e)$. In other word, $\pd \OM_n$ cannot satisfy (2)-(4) near the end-points, but $T$ can straighten it to a curve which verifies (2)-(4). This straightening up should hold for some shapes of pointed part, but surely not for any shape. We have a big family of authorized shapes: $T^{-1}(C_n)$
 where $C_n$ verifies (1)-(4). A possible result could be stated like that: 
 {\it
 \begin{itemize}
 \item if there exists $\d>0$ such that $(\pd \OM_n)\setminus (\cup B(\pm 1,\d))$ satisfy hypothesis (1)-(4) (where we replace $C$ by $\G$),
 \item if $\pd \OM_n$ corresponds to an ``authorized shape'' in $\cup B(\pm 1,\d)$
 \end{itemize}
 then Assumption \ref{assump} is verified.}
 

\subsubsection*{Comment on the convergence to smooth domains}

If we replace the convergence of domain to a curve by a convergence to a smooth domain $\OM$, we can adapt easily Theorem \ref{theo-assump} and \cite{lac_euler} in order to establish that the solution of Euler equations in (or outside) $\OM_n$ converges to the solution of Euler equations in (or outside) $\OM$, if $\OM_n\to\OM$ in the sense of hypothesis (1)-(4). Then, a consequence of this work is that we have found a sense for the domain convergences (hypothesis (1)-(4)), in order that the limit solution is a solution of Euler equations. However, we see in Remark \ref{rugo} that the classical shape in rugosity problems (Picture 4) is not allowed in our analysis. Concerning bounded domains (simply connected), Taylor in \cite{taylor} proves that we do not need so strong properties. Passing to the limit with a weaker domain convergence, he can show the existence of weak solutions for Euler equations in a non-smooth convex domain (with Lipschitz boundary). Therefore, our result does not improve the case of bounded domain, but it gives a new result in exterior domains (outside one obstacle). In exterior domain, the analysis is harder because of the behavior at infinity: the velocity is not square integrable.

\newpage

\section{{\it A priori} estimates}\label{sect : 3}

\subsection{Vorticity estimate}\label{om-est}\  

The study of two dimensional ideal flow is based on velocity estimates thanks to vorticity estimates. In a domain with smooth boundaries, the pair $(u,\om)$ is a strong solution of the transport equation \eqref{euler}, which  gives us the classical estimates for the vorticity: 
\begin{itemize}
\item $\|\om(t,\cdot)\|_{L^p(\Pi)}=\|\om_0\|_{L^p(\R^2)}$  for $p\in[1,+\infty]$;
\item if $\om_0$ is compactly supported in $B(0,R)$, then there exists $C>0$ such that $\om(t,\cdot)$ is compactly supported in $B(0,R+Ct)$;
\item for any $t>0$, we have $\int_{\Pi} \om(t,x)\, dx =   \int_{\Pi} \om_0(x)\, dx$.
\end{itemize}
The goal of this subsection is to prove such properties for the weak solution $(u^\e,\om^\e)$ defined in Definition \ref{sol-curve}. The main point is to remark that this pair is a renormalized solution in the sense of DiPerna and Lions (see \cite{dip-li}) of the transport equation.

Let us assume that $\om_0$ is $L^\infty$ and compactly supported in $B(0,R)\cap B(0,r)$. Moreover we fix $\e$ small enough such that the support of $\om_0$ does not intersect $\G_{\e}$ (any $\e\leq r$). We consider equation \eqref{transport} as a linear transport equation with given velocity field $u^\e$. Our purpose here is to show that if $\om^\e$ solves this linear equation, then so does $\beta(\om^\e)$ for a suitable smooth function $\b$. This follows from the theory developed in \cite{dip-li} (see also \cite{dej} for more details), where they need that the velocity field belongs to $L_{\loc}^1\left(\R^+,W_{\loc}^{1,1}(\RR)\right)$. Let us check that we are in this setting.

\begin{lemma}
Let $(u^\e,\om^\e)$ be a weak-solution of the Euler equations outside the curve $\G_\e$. As $\om^\e \in L^\infty (L^1\cap L^\infty)$ then 
\[u^\e \in L^\infty \left(\R^+,W_{\loc}^{1,1}(\RR)\right).\]
\end{lemma}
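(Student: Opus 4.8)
The plan is to establish the $W^{1,1}_{\loc}$ regularity of $u^\e$ directly from the Biot-Savart representation \eqref{biot}, namely $u^\e = K_\e[\om^\e] + (\g + \int \om^\e) H_\e$, using the behavior of $T_\e$, $DT_\e$ and $D^2 T_\e$ recorded in Proposition \ref{2.2}. Since $\om^\e \in L^\infty(L^1\cap L^\infty)$, the mass $\int \om^\e$ is bounded in time, so $H_\e$ contributes a fixed smooth (in $\Pi_\e$) vector field whose only singularities are at the two endpoints of $\G_\e$; near each endpoint $H_\e$ behaves like $1/\sqrt{|x-(\pm\e,0)|}$ and $DH_\e$ like $|x-(\pm\e,0)|^{-3/2}$, which is integrable on a bounded neighborhood in $\R^2$ because $3/2 < 2$. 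So the real work is to control $K_\e[\om^\e]$ and its first derivatives in $L^1$ over any ball $B(0,\tilde R)$.

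First I would fix a ball $B(0,\tilde R)$ containing $\G_\e$ and the (time-dependent) support of $\om^\e$, which stays bounded since the velocity estimates (to be proved in this section) force finite propagation speed. On the set $B(0,\tilde R)^c$ the map $T_\e$ and $DT_\e$ are bounded with $DT_\e$ bounded and $K_\e[\om^\e](x) = O(1/|x|^2)$ by \eqref{K-inf}; differentiating the kernel brings another factor of $DT_\e$ and a $D^2 T_\e = O(1/|x|)$ term, all integrable against the compactly supported $\om^\e$, so the far field is harmless. On $B(0,\tilde R)$ I would write, for $x$ in $\Pi_\e$,
\begin{equation*}
u^\e(x) = \frac{1}{2\pi} DT_\e^t(x) \int_{\R^2}\Bigl(\frac{(T_\e(x)-T_\e(y))^\perp}{|T_\e(x)-T_\e(y)|^2} - \frac{(T_\e(x)-T_\e(y)^*)^\perp}{|T_\e(x)-T_\e(y)^*|^2}\Bigr)\om^\e(t,y)\,dy,
\end{equation*}
(plus the $H_\e$ term). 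The second kernel term has $|T_\e(y)^*| \le 1$ while $T_\e(x)$ ranges over $D^c$, so its denominator is bounded below and that piece is smooth in $x$ away from the endpoints, with the same mild endpoint singularity as $H_\e$; I would dispatch it like $H_\e$. For the first, singular, kernel term I would change variables $\zeta = T_\e(x)$, $\eta = T_\e(y)$, using that $T_\e^{-1}$ and $DT_\e^{-1}$ are bounded (so $\det DT_\e^{-1}$ is bounded and $dy = \det DT_\e^{-1}(\eta)\,d\eta$): the $y$-integral becomes an integral of the standard Biot-Savart kernel $(\zeta-\eta)^\perp/|\zeta-\eta|^2$ over $D^c$ against the pushforward of $\om^\e$, which is $L^1\cap L^\infty$ with bounded norm; the classical Calderón-Zygmund / potential estimates then give $K[\,\cdot\,] \in W^{1,p}_{\loc}$ for all $p<\infty$ in the $\zeta$ variable, in particular its $\zeta$-gradient is in $L^p_{\loc}(D^c)$. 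Pulling back via $T_\e$, the chain rule produces $D(K_\e[\om^\e]) = (\text{bounded}) \cdot DT_\e + D T_\e^t \cdot (\text{gradient in }\zeta)\cdot DT_\e + D^2T_\e^t \cdot (\text{kernel integral})$; each factor involving $DT_\e$ lies in $L^p(B(0,\tilde R))$ for $p<4$ by Proposition \ref{2.2}, and $D^2 T_\e$ is integrable near the endpoints by the same $3/2<2$ count, so a Hölder inequality with the bounded $\zeta$-derivatives in $L^{p'}$ (any $p'<\infty$) gives a finite $L^1(B(0,\tilde R))$ bound, uniformly in $t$.

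The main obstacle is the interaction of the two distinct sources of singularity: the Calderón-Zygmund singularity of the Biot-Savart kernel on the diagonal $x=y$, and the endpoint blow-up of $DT_\e$ (and $D^2 T_\e$) which is only in $L^p$ for $p<4$ (respectively barely $L^1$). The clean way around it is precisely the conformal change of variables above, which flattens the endpoints: in the $\zeta$-variable the domain $D^c$ is smooth, so the kernel estimates are classical and give derivatives in every $L^{p'}$, $p'<\infty$; then one only needs the Jacobian factors $DT_\e \in L^p$, $p<4$, and $D^2 T_\e$ integrable near the endpoints, and Hölder's inequality with the conjugate exponent closes the estimate since $L^{p'}$ is available for $p'$ as large as we like. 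I would also note that all constants depend on $\|\om_0\|_{L^1\cap L^\infty}$, $\g$, $\tilde R$ and $\e$ (which is fixed here), so the conclusion $u^\e \in L^\infty(\R^+, W^{1,1}_{\loc}(\R^2))$ — indeed $L^\infty(\R^+,W^{1,p}_{\loc})$ for every $p<4$ — follows, which is more than enough to invoke the DiPerna-Lions renormalization theory.
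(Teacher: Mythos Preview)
Your overall strategy --- pass to the variable $\zeta=T_\e(x)$, reduce to a standard Biot--Savart integral on $D^c$, apply Calder\'on--Zygmund there, and pull back via the chain rule using the $L^p$ integrability of $DT_\e$ and $D^2T_\e$ --- is exactly the paper's approach. But there are three genuine problems in the execution.

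First, your treatment of the ``image'' kernel $\dfrac{(T_\e(x)-T_\e(y)^*)^\perp}{|T_\e(x)-T_\e(y)^*|^2}$ is wrong: the denominator is \emph{not} bounded below. You have $|T_\e(x)|\ge 1$ and $|T_\e(y)^*|\le 1$, but both can approach $\pd D$ simultaneously (as $x,y\to\G_\e$), so $|T_\e(x)-T_\e(y)^*|$ can be arbitrarily small. This term carries a genuine diagonal-type singularity and cannot be ``dispatched like $H_\e$''. The paper handles it by first changing $\eta=T_\e(y)$ and then splitting: for $|\eta|\ge 2$ one has $|\eta^*|\le 1/2$ and the kernel is truly bounded; for $1\le |\eta|\le 2$ a further inversion $\theta=\eta^*$ turns the integral into a standard Biot--Savart convolution with an $L^1\cap L^\infty$ density, to which Calder\'on--Zygmund applies exactly as for your first kernel term.

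Second, you invoke the compact support of $\om^\e(t,\cdot)$ ``since the velocity estimates force finite propagation speed''. That is circular: compact support is Proposition~\ref{compact_vorticity}, proved via the renormalization Lemma~\ref{renorm1}, which in turn requires the present lemma. The paper's proof never uses compact support --- it works directly from $\om^\e\in L^\infty(L^1\cap L^\infty)$, and the far-field discussion is unnecessary for a $W^{1,1}_{\loc}$ statement. Third, your final claim that $u^\e\in W^{1,p}_{\loc}$ for all $p<4$ is too strong: the chain rule produces a term $D^2T_\e^t(x)\cdot f(T_\e(x))$ with $f$ bounded, and $D^2T_\e$ behaves like $|x-(\pm\e,0)|^{-3/2}$ near the endpoints (this itself needs proof --- the paper supplies it in the Annexe), so $D^2T_\e\in L^p_{\loc}$ only for $p<4/3$. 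The conclusion $W^{1,1}_{\loc}$ stands, but not $W^{1,p}_{\loc}$ for $p$ near $4$.
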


\begin{proof}
Here, we are not looking for estimates uniformly in $\e$, as later (e.g. Lemma \ref{I_est}). Then, we fix $\e>0$, and we rewrite \eqref{biot}:
\begin{eqnarray*} 
u^\e(x)&=&\frac{1}{2\pi} DT_\e^t(x) \Bigl( \int_{\Pi_\e} \Bigl(\frac{T_\e(x)-T_\e(y)}{|T_\e(x)-T_\e(y)|^2}- \frac{T_\e(x)-T_\e(y)^*}{|T_\e(x)-T_\e(y)^*|^2}\Bigl)^\perp \om^\e(y)\, dy + \a \frac{T_\e(x)^\perp}{|T_\e(x)|^2}\Bigl)\\
&:= & \frac{1}{2\pi} DT_\e^t(x) f(T_\e(x))
\end{eqnarray*}
where $\a$ is bounded by $\g + \| \om^\e\|_{L^\infty(L^1)}$.

We start by treating $f$. We change variable $\y=T_\e(y)$, and we obtain
\begin{eqnarray*}
f(z)&=& \int_{B(0,1)^c} \Bigl(\frac{z-\y}{|z-\y|^2}- \frac{z-\y^*}{|z-\y^*|^2}\Bigl)^\perp \om^\e(T_\e^{-1}(\y)) |\det DT_\e^{-1}(\y)| \, d\y + \a \frac{z^\perp}{|z|^2} \\
&=&\int_{B(0,1)^c}\frac{(z-\y)^\perp}{|z-\y|^2} g(\y) \, d\y - \int_{B(0,2)^c}\frac{(z-\y^*)^\perp}{|z-\y^*|^2} g(\y) \, d\y \\
&&- \int_{B(0,2)\setminus B(0,1)}\frac{(z-\y^*)^\perp}{|z-\y^*|^2} g(\y) \, d\y + \a \frac{z^\perp}{|z|^2}\\
&:=& f_1(z)-f_2(z)-f_3(z)+f_4(z),
\end{eqnarray*} 
with $g(\y)=\om^\e(T_\e^{-1}(\y)) |\det DT_\e^{-1}(\y)|$ belongs\footnote{This estimate is not uniform in $\e$.} to $L^\infty(L^1\cap L^\infty(\R^2))$. As $|z|= |T_\e(x)|\geq 1$, we are looking for estimates in $D^c$. Obviously we have that 
\[ f_4 \text{ belongs to } L^{\infty}(D^c) \text{ and } Df_4 \text{ belongs to } L^{\infty}(D^c).\]

Concerning $f_1$, we introduce $g_1:= g \chi_{D^c}$ where $\chi_S$ denotes the characteristic function on $S$. Hence
\[ f_1(z)=\int_{\R^2}\frac{(z-\y)^\perp}{|z-\y|^2}g_1(\y) \, d\y \text{ with } g_1\in L^\infty(L^1(\R^2)\cap L^\infty(\R^2)).\]
The standard estimates on Biot-Savart kernel in $\R^2$ (see e.g. Lemma \ref{ift}) and Calderon-Zygmund inequality
give that
\[ f_1 \text{ belongs to } L^\infty(\R^+\times \R^2) \text{ and } Df_1 \text{ belongs to } L^\infty(L^p(\R^2)),\ \forall p\in [1,\infty) .\]

For $f_2$, we can remark that for any $\y \in B(0,2)^c$ we have $|z-\y^*|\geq \frac12$. Therefore, the function $(z,\y)\mapsto \frac{(z-\y^*)^\perp}{|z-\y^*|^2} $ is smooth in $B(0,1)^c\times B(0,2)^c$, which gives us, by a classical integration theorem, that
\[ f_2 \text{ belongs to } L^\infty(\R^+\times D^c) \text{ and } Df_2 \text{ belongs to } L^\infty(\R^+\times D^c).\]

To treat the last term, we change variables $\theta = \y^*$
\[ f_3(z) = \int_{B(0,1)\setminus B(0,1/2)} \frac{(z-\theta)^\perp}{|z-\theta|^2} g(\theta^*) \frac{d\theta}{|\theta|^4}:= \int_{\R^2} \frac{(z-\theta)^\perp}{|z-\theta|^2} g_3(\theta)\, d\theta,\]
with $g_3(\theta)=\displaystyle \frac{g(\theta^*)}{|\theta|^4} \chi_{B(0,1)\setminus B(0,1/2)}(\theta)$ which belongs to $L^\infty(L^1(\R^2)\cap L^\infty(\R^2))$. Therefore, standard estimates on Biot-Savart kernel and Calderon-Zygmund inequality
give that
\[ f_3 \text{ belongs to } L^\infty(\R^+\times \R^2) \text{ and } Df_3 \text{ belongs to } L^\infty(L^p(\R^2)),\ \forall p\in [1,\infty) .\]

Now, we come back to $u^\e$. As $u^\e (x) = \frac{1}{2\pi} DT_\e^t(x) f(T_\e(x))$, with $DT_\e$ belonging to $L^p_{\loc}(\R^2)$ for $p<4$ and $f\circ T_\e$ uniformly bounded, we have that
\[ u^\e \text{ belongs to } L^{\infty}(\R^+;L^1_{\loc} (\R^2)).\]

Moreover, we have
\begin{eqnarray*}
 |Du^\e(x)| &\leq&  \frac{1}{2\pi}\Bigl( |D^2 T_\e(x)| |f(T_\e(x))| + |DT_\e(x)|^2 |(Df_1-Df_3) (T_\e(x))|\\
 &&+|DT_\e(x)|^2 |(-Df_2+Df_4) (T_\e(x))|\Bigl).
 \end{eqnarray*}
 We see that the second right hand side term belongs to $L^{\infty}(\R^+;L^1_{\loc} (\R^2))$ because $DT_\e$ belongs to $L^{3}_{\loc}(\R^2)$ and $(Df_1-Df_3)(T_\e(x))$ belongs to $L^{\infty}(\R^+;L^3 (\R^2))$. Similarly, the third right hand side term belongs to $L^{\infty}(\R^+;L^1_{\loc} (\R^2))$ because $DT_\e$ belongs to $L^{2}_{\loc}(\R^2)$ and $-Df_2+Df_4$ belongs to $L^{\infty}(\R^+\times D^c)$.
 
For the first right hand side term, we know that $f\circ T_\e$ is uniformly bounded, then we have to prove that $D^2 T_\e$ belongs to $L^1_{\loc}(\R^2)$ in order to finish the proof. Keeping in mind that $DT_\e$ is smooth everywhere, except near the end-points where it behaves like the inverse of the square root of the distance, and noting that the map $x\mapsto 1/\sqrt{|x|}$ belongs to $W^{1,1}_{\loc}(\R^2)$, it is natural to think that it holds true. However, this argument needs an estimate on $D^2 T$ up to the boundary. We have to check carefully in the proof of Proposition \ref{2.2} (see \cite{lac_euler}) how to gain this control. Actually, we can add a point at Proposition \ref{2.2}:
\begin{itemize}
\item$D^2 T$ extends continuously up to $\G$ with different values on each side of $\G$, except at the endpoints of the curve where $D^2 T$ behaves like the inverse of the power $3/2$ of the distance,
\end{itemize}
which implies that $D^2 T$ is bounded in $L^p(\Pi\cap B(0,R))$ for all $p<4/3$ and $R>0$. This extension allows us to finish the proof of this lemma.

For completeness, this extension of Proposition \ref{2.2} is proved in Annexe. 
\end{proof}

Therefore, \cite{dip-li,dej} imply that $\om^\e$ is a renormalized solution.

\begin{lemma}
 \label{renorm1}
Let $\om^\e$ be a solution of \eqref{transport}. Let $\beta:\R \rightarrow \R$ be a smooth function such that
\begin{equation*}
|\beta'(t)|\leq C(1+ |t|^p),\qquad \forall t\in \R,
\end{equation*}
for some $p\geq 0$. Then for all test function $\psi \in \mathcal{D}(\R^+ \times \RR)$, we have
\begin{equation*}
\frac{d}{dt}\int_{\RR} \psi \beta(\om^\e)\,dx=\int_{\RR} \beta(\om^\e) (\dt \psi +u^\e\cdot \nabla \psi)\,dx \:\: \textrm{in }\: L_{\loc}^1(\R^+).
\end{equation*}
\end{lemma}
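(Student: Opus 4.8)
The plan is to run the DiPerna--Lions regularization scheme, since the preceding lemma puts us exactly in its hypotheses: $u^\e\in L^\infty(\R^+;W^{1,1}_{\loc}(\R^2))$, $\diver u^\e=0$ in $\R^2$ (Proposition~\ref{remark_lac_euler}), and $\om^\e\in L^\infty(\R^+;L^1\cap L^\infty(\R^2))$ is a distributional solution of the linear transport equation \eqref{transport} with drift $u^\e$. First I would regularize in space: let $\rho_\d$ be a standard mollifier and set $\om^\e_\d:=\om^\e(t,\cdot)*\rho_\d$. Convolving \eqref{transport} gives
\begin{equation*}
\dt \om^\e_\d + \diver(u^\e\,\om^\e_\d)=r_\d,\qquad r_\d:=\diver(u^\e\,\om^\e_\d)-\bigl(\diver(u^\e\,\om^\e)\bigr)*\rho_\d,
\end{equation*}
and, because $u^\e\in W^{1,1}_{\loc}$ and $\om^\e\in L^\infty_{\loc}$, the DiPerna--Lions commutator lemma (\cite{dip-li}, see also \cite{dej}) yields $r_\d\to 0$ in $L^1_{\loc}(\R^+\times\R^2)$ as $\d\to 0$.

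Since $\om^\e_\d$ is smooth in $x$, the ordinary chain rule applies to the regularized equation, and using $\diver u^\e=0$ one gets, for $\b$ as in the statement,
\begin{equation*}
\dt \b(\om^\e_\d)+\diver\bigl(u^\e\,\b(\om^\e_\d)\bigr)=\b'(\om^\e_\d)\,r_\d .
\end{equation*}
Pairing this with a test function $\p\in\Dc(\R^+\times\R^2)$, integrating over $\R^2$ and integrating by parts in the divergence term, one obtains
\begin{equation*}
\frac{d}{dt}\int_{\R^2}\p\,\b(\om^\e_\d)\,dx=\int_{\R^2}\b(\om^\e_\d)\bigl(\dt\p+u^\e\cdot\na\p\bigr)\,dx+\int_{\R^2}\p\,\b'(\om^\e_\d)\,r_\d\,dx
\end{equation*}
in $L^1_{\loc}(\R^+)$. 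It then remains to let $\d\to0$. One has $\om^\e_\d\to\om^\e$ a.e.\ and in $L^p_{\loc}$ for every finite $p$, with the uniform bound $\|\om^\e_\d\|_{L^\infty}\le\|\om_0\|_{L^\infty}$; hence $\b(\om^\e_\d)\to\b(\om^\e)$ in every $L^p_{\loc}$ and $\|\b'(\om^\e_\d)\|_{L^\infty(\supp\p)}$ stays bounded. The commutator term then tends to $0$ (since $|\p\,\b'(\om^\e_\d)|$ is bounded on $\supp\p$ and $r_\d\to0$ in $L^1_{\loc}$), while the first integral on the right converges because $\dt\p$ is bounded with compact support and $u^\e\cdot\na\p\in L^\infty(\R^+;L^1(\R^2))$ on $\supp\p$ (here $u^\e\in L^\infty(L^1_{\loc})$ is used), both combined with bounded and a.e.\ convergence of $\b(\om^\e_\d)$. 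This gives the stated identity in $\Dc'(\R^+)$; since the right-hand side is, for every such $\p$, an $L^1_{\loc}(\R^+)$ function of $t$ (its integrand being bounded and compactly supported in $x$), the equality in fact holds in $L^1_{\loc}(\R^+)$.

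The main obstacle is the vanishing of the commutator, $r_\d\to0$, in the borderline regularity class $W^{1,1}_{\loc}$ (rather than $W^{1,p}_{\loc}$ with $p>1$, where it is elementary): this is exactly what the DiPerna--Lions theory provides, and the reason the $W^{1,1}_{\loc}$ bound on $u^\e$ had to be established in the previous lemma. A secondary point requiring care is the behaviour of $u^\e$ near $\G_\e$ (continuous up to the curve from each side, blowing up like the inverse square root of the distance at the endpoints, and tangent to $\G_\e$), but all of this has already been absorbed into the $W^{1,1}_{\loc}$ statement proved earlier, so no further ad hoc treatment near the curve is needed here.
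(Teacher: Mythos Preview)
Your sketch is correct and follows exactly the approach the paper intends: the paper does not give an explicit proof of this lemma but simply invokes the DiPerna--Lions theory \cite{dip-li,dej}, having established in the preceding lemma the key hypothesis $u^\e\in L^\infty(\R^+;W^{1,1}_{\loc}(\R^2))$. Your write-up is a faithful outline of the standard regularization-and-commutator argument from those references, so there is nothing to add.
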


Now, we write a remark from \cite{lac_miot}, in order to establish the some desired properties for $\om^\e$. 

\begin{remark}
\label{remark : conserv} (1) Lemma \ref{renorm1} actually still
holds when $\psi$ is smooth, bounded and has bounded first
derivatives in time and space. In this case, we have to consider
smooth functions $\beta$ which in addition satisfy $\beta(0)=0$, so
that $\beta(\om^\e)$ is integrable.
 This may be proved by approximating $\psi$ by smooth and compactly supported functions $\psi_n$ for which Lemma \ref{renorm1} applies, and by letting then $n$ go to $+\infty$.\\
(2) We apply the point (1) for $\beta(t)=t$ and $\p\equiv 1$, which gives
\begin{equation}\label{om-est-1}
 \int_{\R^2} \om^\e(t,x)\, dx =   \int_{\R^2} \om_0(x)\, dx \text{ for all }t>0.
 \end{equation} 
(3) We let $1\leq p<+\infty$. Approximating $\beta(t)=|t|^p$ by
smooth functions and choosing $\psi\equiv 1$ in Lemma \ref{renorm1},
we deduce that for an solution $\om^\e$ to \eqref{transport}, the
maps $t\mapsto \|\om^\e(t)\|_{L^p(\RR)}$ are continuous and constant.
In particular, we have
\begin{equation}\label{om-est-2}
 \|\om^\e(t)\|_{L^1(\RR)}+\|\om^\e(t)\|_{L^\infty(\RR)}:= \|\om_0\|_{L^1(\RR)}+\|\om_0\|_{L^\infty(\RR)}.
\end{equation}
\end{remark}

Unfortunately, we cannot establish now that $\om^\e$ is compactly supported uniformly in $\e$. For that, we need some estimates on the velocity.

\subsection{Velocity estimate}\ 

The goal of this subsection is to find a velocity estimate uniformly in $\e$, thanks to the explicit formula of $u^{\e}$ in function of $\om^{\e}$ and $\g$ (Biot-Savart law). We will need the following lemma from \cite{ift}:
\begin{lemma}\label{ift}
Let $S\subset\R^2$ and $g:S\to\R^+$ be a function belonging in $L^1(S)\cap L^p(S)$, for $p\in (2;+\infty]$. Then
\[\int_S \frac{g(y)}{|x-y|}dy\leq C\|g\|_{L^1(S)}^{\frac{p-2}{2(p-1)}}\|g\|_{L^p(S)}^{\frac{p}{2(p-1)}}.\]
\end{lemma}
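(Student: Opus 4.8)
The final displayed statement is Lemma~\ref{ift}: for $S\subset\R^2$ and $g\in L^1(S)\cap L^p(S)$ with $p\in(2,\infty]$, one has
\[
\int_S \frac{g(y)}{|x-y|}\,dy\leq C\,\|g\|_{L^1(S)}^{\frac{p-2}{2(p-1)}}\,\|g\|_{L^p(S)}^{\frac{p}{2(p-1)}}.
\]

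**Proof proposal.** The natural approach is the classical ``split the kernel at a radius $\delta$'' argument, optimized over $\delta$. First I would extend $g$ by zero so that the integral is over all of $\R^2$, and fix $x$. For a parameter $\delta>0$ to be chosen, I split
\[
\int_{\R^2}\frac{g(y)}{|x-y|}\,dy=\int_{|x-y|<\delta}\frac{g(y)}{|x-y|}\,dy+\int_{|x-y|\geq\delta}\frac{g(y)}{|x-y|}\,dy=:I_1+I_2.
\]
For $I_2$ I bound $1/|x-y|\leq 1/\delta$ and get $I_2\leq \delta^{-1}\|g\|_{L^1(S)}$. For $I_1$ I apply Hölder with exponents $p$ and $p'=p/(p-1)$ (in the case $p=\infty$ one simply uses $I_1\leq\|g\|_{L^\infty}\int_{|x-y|<\delta}|x-y|^{-1}dy$):
\[
I_1\leq \|g\|_{L^p(S)}\Bigl(\int_{|x-y|<\delta}|x-y|^{-p'}\,dy\Bigr)^{1/p'}.
\]
The inner integral is finite precisely because $p'<2$ (equivalently $p>2$), and in polar coordinates equals $2\pi\int_0^\delta r^{1-p'}\,dr=\frac{2\pi}{2-p'}\,\delta^{2-p'}$. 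Hence $I_1\leq C_p\,\|g\|_{L^p(S)}\,\delta^{(2-p')/p'}=C_p\,\|g\|_{L^p(S)}\,\delta^{2/p'-1}=C_p\,\|g\|_{L^p(S)}\,\delta^{1-2/p}$ (using $2/p'=2-2/p$).

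**Optimization.** I then have $\int\frac{g(y)}{|x-y|}\,dy\leq C_p\|g\|_{L^p}\,\delta^{1-2/p}+\|g\|_{L^1}\,\delta^{-1}$, uniformly in $x$. Both exponents of $\delta$ have the expected signs ($1-2/p>0$ since $p>2$, and $-1<0$), so I minimize over $\delta>0$; the optimal $\delta$ is comparable to $\bigl(\|g\|_{L^1}/\|g\|_{L^p}\bigr)^{p/(2p-2)}$. Substituting back, a direct computation of the two resulting exponents gives exactly $\|g\|_{L^1}^{(p-2)/(2(p-1))}\|g\|_{L^p}^{p/(2(p-1))}$ on both terms, which yields the claimed inequality with a constant $C$ depending only on $p$. (If $\|g\|_{L^p}=0$ the statement is trivial, so one may assume the ratio is well-defined.)

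**Main obstacle.** There is no real obstacle here — the only care needed is bookkeeping of the exponents to check they combine to the stated powers, and making sure the borderline case $p=\infty$ (where $p'=1$ and $I_1\leq 2\pi\|g\|_{L^\infty}\delta$) is covered by the same optimization, giving $\|g\|_{L^1}^{1/2}\|g\|_{L^\infty}^{1/2}$ as expected from the formula at $p=\infty$. The estimate being independent of $x$ is automatic since every bound above is.
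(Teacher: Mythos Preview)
Your proof is correct and is the standard argument for this type of convolution estimate. Note that the paper does not actually prove this lemma: it is quoted from \cite{ift} without proof, so there is nothing to compare your approach against. Your splitting of the kernel at a radius $\delta$, applying H\"older on the near part (using that $|y|^{-p'}$ is locally integrable in $\R^2$ exactly when $p'<2$, i.e.\ $p>2$) and the trivial $L^1$ bound on the far part, followed by optimization in $\delta$, is precisely the classical derivation of such interpolation inequalities, and the exponent bookkeeping you indicate is accurate.
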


For $h: \Pi_\e \to\R$ a function belonging in $L^1(S)\cap L^p(S)$, with $p\in (2;+\infty]$, we introduce
\begin{equation*}
I_1^\e[h](x)= \int_{\Pi_{\e}}\dfrac{(T_{\e}(x)-T_{\e}(y))^\perp}{|T_{\e}(x)-T_{\e}(y)|^2} h(y)dy,
\end{equation*}
and
\begin{equation*}
I_2^\e[h](x)=\int_{\Pi_{\e}}\dfrac{(T_{\e}(x)- T_{\e}(y)^*)^\perp}{|T_{\e}(x)- T_{\e}(y)^*|^2} h(y)dy.
\end{equation*}

Therefore, the Biot-Savart law can be written
\begin{equation}
\label{u_e}
u^{\e}(t,x)=\dfrac{1}{2\pi}DT_{\e}^t(x)(I_1^\e[\om^\e(t,\cdot)](x) - I_2^\e[\om^\e(t,\cdot)])(x)+(\g+m) H_{\e}(x),
\end{equation} 
with $m:=\int \om^\e(t,\cdot)= \int \om_0$ by \eqref{om-est-1}.
In \cite{lac_euler}, we manage to estimate directly $I_1^\e[\om^\e(t,\cdot)](x)$ and $I_2[\om^\e(t,\cdot)](x)$, uniformly in $x$ and $\e$, by $\| \om^\e(t,\cdot)\|_{L^1\cap L^\infty}$. In \cite{ift_lop_euler}, the authors obtain a uniform estimate in $x$ and $\e$ of
\[ \dfrac{1}{2\pi}DT_{\e}^t(x) I_1^\e[\om^\e(t,\cdot)](x) \text{ and }- \dfrac{1}{2\pi}DT_{\e}^t(x) I_2^\e[\om^\e(t,\cdot)] (x)+ m H_{\e}(x),\]
by $\| \om^\e(t,\cdot)\|_{L^1\cap L^\infty}$. The advantage of this decomposition is that each term has zero circulation around the small obstacle. Later, they have to study independently the last part of the velocity $\g H_{\e}$. As the size of the curve tends to zero, we see here that we have to use the decomposition from \cite{ift_lop_euler}. Then, let us introduce
\[ \tilde I_2^\e[h](x)=-I_2^\e[h](x) + m_h \frac{T_\e(x)^\perp}{|T_\e(x)|^2},\]
with $m_h=\displaystyle \int_{\Pi^\e} h(y) \, dy.$

\begin{lemma}\label{I_est} For any $p\in (2,\infty]$, there exists a constant $C_p>0$ depending only on the shape of $\G$, such that
\[ |I_{1}^\e[h](x)| \leq C \e \| h \|_{L^1}^{\frac{p-2}{2(p-1)}} \| h \|_{L^p}^{\frac{p}{2(p-1)}} \text{\ and\ } | \tilde I_2^\e[h](x) | \leq C \e \| h \|_{L^1}^{\frac{p-2}{2(p-1)}} \| h \|_{L^p}^{\frac{p}{2(p-1)}},\]
for all $x\in \R^2$, $\e>0$.
\end{lemma}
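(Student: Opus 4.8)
The plan is to dominate both $|I_1^\e[h](x)|$ and $|\tilde I_2^\e[h](x)|$ by one and the same scalar integral, namely $\int_{\Pi_\e}\frac{|h(y)|}{|T_\e(x)-T_\e(y)|}\,dy$, then to carry this integral over to the reference exterior domain $D^c$ by the substitution $\eta=T_\e(y)$ and to conclude with Lemma~\ref{ift}. The factor $\e$ then appears by bookkeeping the Jacobian of $T_\e^{-1}$ against the interpolation exponents of Lemma~\ref{ift}; I do not expect a genuine obstacle beyond one elementary reflection inequality.

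For $I_1^\e$ the domination is immediate, since $|v^\perp|=|v|$ gives $|I_1^\e[h](x)|\le\int_{\Pi_\e}\frac{|h(y)|}{|T_\e(x)-T_\e(y)|}\,dy$. For $\tilde I_2^\e$ I would first recombine the two terms: since $m_h=\int_{\Pi_\e}h(y)\,dy$,
\[ \tilde I_2^\e[h](x)=\int_{\Pi_\e}\Bigl(\tfrac{T_\e(x)^\perp}{|T_\e(x)|^2}-\tfrac{(T_\e(x)-T_\e(y)^*)^\perp}{|T_\e(x)-T_\e(y)^*|^2}\Bigr)h(y)\,dy. \]
Applying the identity \eqref{frac} with $a=T_\e(x)$ and $b=T_\e(x)-T_\e(y)^*$ (so $a-b=T_\e(y)^*$), and using $|T_\e(x)|\ge 1$ because $T_\e(x)\in D^c$, the modulus of the integrand is at most $\frac{|T_\e(y)^*|}{|T_\e(x)-T_\e(y)^*|}|h(y)|$. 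The one place where some thought is required is the elementary reflection inequality: for $a,c\in\R^2$ with $|a|\ge1$, $|c|\ge1$ and $c^*:=c/|c|^2$, one has $|c^*|\,|a-c|\le|a-c^*|$; after clearing denominators this reads $|a-c|^2\le\bigl||c|\,a-c/|c|\bigr|^2$, and the difference of the two sides is exactly $(|a|^2-1)(|c|^2-1)\ge0$. Taking $a=T_\e(x)$, $c=T_\e(y)$ (so $c^*=T_\e(y)^*$) yields $\frac{|T_\e(y)^*|}{|T_\e(x)-T_\e(y)^*|}\le\frac1{|T_\e(x)-T_\e(y)|}$, hence $|\tilde I_2^\e[h](x)|\le\int_{\Pi_\e}\frac{|h(y)|}{|T_\e(x)-T_\e(y)|}\,dy$ as well. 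This also explains structurally why the combination $\tilde I_2^\e$ (the one with zero circarculation, following \cite{ift_lop_euler}) obeys the same bound as $I_1^\e$.

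Finally I would substitute $\eta=T_\e(y)=T(y/\e)$, so $dy=|\det DT_\e^{-1}(\eta)|\,d\eta=\e^2|\det DT^{-1}(\eta)|\,d\eta$ by Proposition~\ref{biholo-est}(i), with $|\det DT^{-1}|$ bounded on $D^c$ by Proposition~\ref{2.2}. Setting $\xi=T_\e(x)$ and $\tilde h(\eta):=|h(T_\e^{-1}(\eta))|\,\e^2|\det DT^{-1}(\eta)|$, both integrals are bounded by $\int_{D^c}\frac{\tilde h(\eta)}{|\xi-\eta|}\,d\eta$, and Lemma~\ref{ift} gives, uniformly in $\xi$,
\[ \int_{D^c}\frac{\tilde h(\eta)}{|\xi-\eta|}\,d\eta\le C\,\|\tilde h\|_{L^1(D^c)}^{\frac{p-2}{2(p-1)}}\|\tilde h\|_{L^p(D^c)}^{\frac{p}{2(p-1)}}. \]
Undoing the substitution gives $\|\tilde h\|_{L^1(D^c)}=\|h\|_{L^1(\Pi_\e)}$, while pulling a factor $|\det DT^{-1}|^{p-1}$ out in sup-norm gives $\|\tilde h\|_{L^p(D^c)}^p\le C\,\e^{2p-2}\|h\|_{L^p(\Pi_\e)}^p$, i.e. $\|\tilde h\|_{L^p}\le C\e^{2-2/p}\|h\|_{L^p}$. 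Plugging these in, the exponent of $\e$ is $\bigl(2-\tfrac2p\bigr)\cdot\tfrac{p}{2(p-1)}=1$, which is exactly the asserted estimate, with $p=\infty$ as the limiting case; the constant depends only on $p$ and, through the bound on $|\det DT^{-1}|$ on $D^c$, on the shape of $\G$.
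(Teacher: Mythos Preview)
Your proof is correct and, for $\tilde I_2^\e$, genuinely cleaner than the paper's. Both arguments treat $I_1^\e$ in essentially the same way (change variables to $D^c$ and invoke Lemma~\ref{ift}); the only cosmetic difference is that the paper scales by $\e$ before substituting, so that the factor $\e$ sits in front from the start, whereas you carry the Jacobian $\e^2|\det DT^{-1}|$ through and recover the single $\e$ at the end from the identity $\bigl(2-\tfrac2p\bigr)\cdot\tfrac{p}{2(p-1)}=1$.

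The real divergence is in $\tilde I_2^\e$. The paper, after the same change of variables and use of~\eqref{frac}, performs the further inversion $\theta=\e\eta^*$ and splits the resulting integral into the regions $|\theta|\le 1/2$ and $1/2\le|\theta|\le 1$, bounding each piece by a separate appeal to Lemma~\ref{ift}. You bypass this decomposition entirely by the pointwise reflection inequality $|c^*|\,|a-c|\le |a-c^*|$ for $|a|,|c|\ge 1$ (whose verification $(|a|^2-1)(|c|^2-1)\ge 0$ is one line), which collapses the $\tilde I_2^\e$ bound onto the same scalar integral already controlling $I_1^\e$. This is shorter, avoids the case split, and makes transparent why the zero-circulation combination $\tilde I_2^\e$ inherits exactly the $I_1^\e$ estimate.
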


\begin{proof} The proof is the same as \cite{ift_lop_euler}, where you replace $DT_{\e}(x)$ by $1/\e$ and where you use Lemma \ref{ift}. For sake of clarity, we write the details.

We start by treating $I_1^\e$:
\[|I_1^\e[h](x) | \leq  \int_{\Pi_\e}\dfrac{|h(y)|}{|T(x/\e)-T(y/\e)|} dy.\]
We introduce $J=J(\x):= |\det(DT^{-1})(\x)|$ and $z=\e T(x/\e)$. Changing the variables $\y=\e T(y/\e)$, we find
\[|I_1^\e[h] (x)|\leq \e \int_{|\y|\geq\e}\dfrac{|h(\e T^{-1}(\y/\e))| J(\y/\e)}{|z-\y|}\, d\y.\]
Then, we denote $f^\e(\y)=|h(\e T^{-1}(\y/\e))| J(\y/\e)\h_{|\y\geq\e}$, with $\h_E$ the characteristic function of the set $E$. Changing variables back, we remark that 
\[ \|f^\e\|_{L^1(\R^2)}=\|h\|_{L^1},\]
and
\[ \|f^\e\|_{L^p(\R^2)}\leq C_p\|h\|_{L^p},\]
using the second point of Proposition \ref{2.2}. Now, we can use Lemma \ref{ift} to state that
\[ | I_1^\e[h](x) |\leq \e \int_{\R^2}\frac{f^\e(\y) }{|z-\y|}\, d\y \leq C_p\e \|f^\e\|_{L^1}^\frac{p-2}{2(p-1)} \|f^\e\|_{L^p}^\frac{p}{2(p-1)},\]
which allows us to conclude for $I_1^\e$.

Let us focus now on the second term: 
\[ \tilde I_2^\e[h](x)  = \int_{\Pi_\e}\left(-\dfrac{(T(x/\e)-T(y/\e)^*)^\perp}{|T(x/\e)-T(y/\e)^*|^2}+\dfrac{T(x/\e)^\perp}{|T(x/\e)|^2}\right) h(y)\, dy.\]
We use, as before, the notations $J$, $z$ and the change of variables $\y$
\begin{eqnarray*}
\tilde I_2^\e[h](x)  &=&  \e \int_{|\y|\geq \e} \left(-\dfrac{z-\e^2 \y^*}{|z-\e^2\y^*|^2}+\dfrac{z}{|z|^2} \right) h(\e T^{-1}(\y/\e)) J(\y/\e)\, d\y \\
|\tilde I_2^\e[h](x) | &\leq& \e \int_{|\y|\geq \e} \frac{\e^2|\y^*|}{|z||z-\e^2\y^*|}|h(\e T^{-1}(\y/\e))|J(\y/\e)d\y.
\end{eqnarray*}
using \eqref{frac}. As $z=\e T(x/\e)$, we have $|z|\geq \e$, hence 
\[ |\tilde I_2^\e[h](x) | \leq \e \int_{|\y|\geq \e} \frac{\e|\y^*|}{|z-\e^2\y^*|}|h(\e T^{-1}(\y/\e))|J(\y/\e)d\y.\]
Next, we change variables $\th=\e\y^*$, and we obtain:
\begin{eqnarray*}
|\tilde I_2^\e[h](x) | &\leq& \e \int_{|\th|\leq 1}\frac{|\th|}{|z-\e\th|}|h(\e T^{-1}(\th^*))|J(\th^*)\frac{\e^2}{|\th|^4}d\th \\
&\leq& \e \left(\int_{|\th|\leq 1/2}+\int_{1/2\leq|\th|\leq 1}\right):= \e(I_{21}+I_{22}).
\end{eqnarray*}

We start with $I_{21}$. If $|\th|\leq 1/2$ then $|z-\e\th|\geq\e/2$. Hence 
\begin{eqnarray*}
I_{21} &\leq&  \int_{|\th|\leq 1/2}2\e|\th||h(\e T^{-1}(\th^*))|J(\th^*)\frac{d\th}{|\th|^4} \\
&=& 2 \int_{|\y|\geq 2\e}\frac{|h(\e T^{-1}(\y/\e))|J(\y/\e)}{|\y|}d\y\leq 2\int_{\R^2} \frac{f^\e(\y)}{|\y|}d\y,
\end{eqnarray*}
with $f^\e$ defined above. Using again Lemma \ref{ift}, we can conclude for $I_{21}$.

To treat $I_{22}$, we put $g^\e(\th)=|g(\e T^{-1}(\th^*))|J(\th^*)\frac{\e^2}{|\th|^4}$. We have
\[I_{22}=\int_{1/2\leq|\th|\leq 1}\frac{|\th|}{|z-\e\th|}g^\e(\th)d\th .\]
Changing variables back, we remark that
\[ \|g^\e\|_{L^1(1/2\leq|\th|\leq 1)}\leq \|h\|_{L^1}.\]
Moreover, it is easy to see that
\[ \|g^\e\|_{L^p(1/2\leq|\th|\leq 1)} \leq C \e^{\frac{2p-2}{p}}\|h\|_{L^p}.\]
Next, we apply Lemma \ref{ift} with $g^\e$:
\begin{eqnarray*}
I_{22}&=&\frac{1}{\e}\int_{1/2\leq|\th|\leq 1} \frac{|\th|}{|z/\e-\th|}g^\e(\th)d\th \\
&\leq& \frac{C}{\e} \|g^\e\|_{L^1}^{\frac{p-2}{2(p-1)}}\|g^\e\|_{L^p}^{\frac{p}{2(p-1)}} \leq C_1  \|h \|_{L^1}^{\frac{p-2}{2(p-1)}}\|h\|_{L^p}^{\frac{p}{2(p-1)}},
\end{eqnarray*}
which ends the proof.
\end{proof}
In \cite{ift_lop_euler}, the authors use the estimate of $\frac{1}{2\pi}DT_{\e}^t I_1^\e[h]$ and $\frac{1}{2\pi}DT_{\e}^t \tilde I_2^\e[h]$ with $h=\om^\e(t,\cdot)$, and with $h= \zeta \cdot \na \F^\e$ ($\F^\e$ denoting the cutoff function of an $\e$ neighborhood of $\OM_\e$, see the proof of Corollary 4.1 therein), where there exist some $L^1$ and $L^\infty$ estimates for these two functions. In our case, we have again that $\om^\e(t,\cdot)$ are uniformly bounded in $L^1\cap L^\infty$, but we will only obtain $L^p$ estimates for $\na \F^\e$, with $p<4$ (see Lemma \ref{4.4}). It explains why we have to establish estimates for $h$ belonging in $L^1\cap L^p$ for $p\in (2,\infty]$.

Using the previous lemma with $h=\om^\e(t,\cdot)$, $p=+\infty$, and thanks to \eqref{u_e}, \eqref{T_Lp}, \eqref{om-est-1}, \eqref{om-est-2} we can deduce directly the following theorem:

\begin{theorem}\label{4.2} We denote $v^{\e}:= u^{\e} - \g H_{\e}$. For any $p<4$,  $v^{\e}$  is bounded in $L^\infty(\R^+,L^p_{\loc}(\Pi_{\e}))$ independently  of ${\e}$. More precisely, there exists a constant $C_p>0$ depending only on the shape of $\G$ and the initial conditions $\|\om_0\|_{L^1}$, $\|\om_0\|_{L^\infty}$, such that 
$$\|v^{\e}(t,\cdot)\|_{L^p(B(0,R) \cap \Pi_{\e})}\leq C_p(1+R^{2/p}), \text{\ for all\ } R>0, \ t\geq 0. $$
\end{theorem}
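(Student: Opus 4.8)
The plan is to reduce $v^\e$ to the combination of the integrals $I_1^\e$ and $\tilde I_2^\e$ controlled by Lemma \ref{I_est}, and then integrate the resulting pointwise bound against the $L^p$ estimate \eqref{T_Lp} for $DT_\e$; the two powers of $\e$ cancel, which is exactly what makes the estimate uniform.

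\medskip

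First I would rewrite the Biot--Savart law \eqref{u_e}. Taking $h=\om^\e(t,\cdot)$, the identity \eqref{om-est-1} gives $m_h=m=\int\om_0$, so that, using $-I_2^\e[\om^\e]=\tilde I_2^\e[\om^\e]-m\,T_\e^\perp/|T_\e|^2$,
\begin{align*}
u^\e(t,x)&=\frac{1}{2\pi}DT_\e^t(x)\bigl(I_1^\e[\om^\e](x)+\tilde I_2^\e[\om^\e](x)\bigr)-\frac{m}{2\pi}DT_\e^t(x)\frac{T_\e(x)^\perp}{|T_\e(x)|^2}+(\g+m)H_\e(x).
\end{align*}
Since $H_\e(x)=\frac{1}{2\pi}DT_\e^t(x)\,T_\e(x)^\perp/|T_\e(x)|^2$, the middle term equals $-mH_\e(x)$ and cancels the $mH_\e(x)$ coming from $(\g+m)H_\e(x)$, leaving
\[
v^\e(t,x)=u^\e(t,x)-\g H_\e(x)=\frac{1}{2\pi}DT_\e^t(x)\bigl(I_1^\e[\om^\e(t,\cdot)](x)+\tilde I_2^\e[\om^\e(t,\cdot)](x)\bigr).
\]
This is the place where the decomposition of \cite{ift_lop_euler} into pieces of zero circulation around the obstacle is used: the only term carrying the full circulation $\g$ is $\g H_\e$, and it is precisely what is subtracted off in the definition of $v^\e$.

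\medskip

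Next I would apply Lemma \ref{I_est} with $h=\om^\e(t,\cdot)$ and $p=+\infty$, for which the exponents become $\tfrac12$ and $\tfrac12$, to get, uniformly in $x\in\R^2$, $t\ge0$ and $\e$,
\[
|I_1^\e[\om^\e(t,\cdot)](x)|+|\tilde I_2^\e[\om^\e(t,\cdot)](x)|\le C\e\,\|\om^\e(t,\cdot)\|_{L^1}^{1/2}\|\om^\e(t,\cdot)\|_{L^\infty}^{1/2}\le C\e\bigl(\|\om_0\|_{L^1}+\|\om_0\|_{L^\infty}\bigr),
\]
where the last step uses the conservation of the vorticity norms \eqref{om-est-2}. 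Hence $|v^\e(t,x)|\le C\e(\|\om_0\|_{L^1}+\|\om_0\|_{L^\infty})\,|DT_\e(x)|$ pointwise, and taking the $L^p(B(0,R)\cap\Pi_\e)$ norm (the prefactor being independent of $x$),
\[
\|v^\e(t,\cdot)\|_{L^p(B(0,R)\cap\Pi_\e)}\le C\e\bigl(\|\om_0\|_{L^1}+\|\om_0\|_{L^\infty}\bigr)\,\|DT_\e\|_{L^p(B(0,R)\cap\Pi_\e)}.
\]
Now \eqref{T_Lp} (equivalently Proposition \ref{biholo-est}(ii)) bounds $\|DT_\e\|_{L^p(B(0,R)\cap\Pi_\e)}$ by $\e^{-1}C(1+R^{2/p})$ for every $p<4$ and $\e$ small; the two powers of $\e$ cancel and we obtain $\|v^\e(t,\cdot)\|_{L^p(B(0,R)\cap\Pi_\e)}\le C_p(1+R^{2/p})$ with $C_p$ depending only on $\G$, $p$, $\|\om_0\|_{L^1}$, $\|\om_0\|_{L^\infty}$, uniformly in $t\ge0$ and $\e$.

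\medskip

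I do not expect a genuine obstacle here: the statement is essentially a combination of the estimates already in place. The one point requiring care is the algebraic rearrangement of the first paragraph — recognizing that $v^\e$, not $u^\e$, is the quantity that is exactly $\frac{1}{2\pi}DT_\e^t(I_1^\e+\tilde I_2^\e)$, since only for that combination does Lemma \ref{I_est} supply the factor $\e$ that matches the $\e^{-1}$ blow-up of $\|DT_\e\|_{L^p}$, whereas the singular part $\g H_\e$ enjoys no such gain and must be kept aside. One should also read ``independently of $\e$'' as ``uniformly for $\e$ in a fixed interval $(0,\e_0]$'', the regime in which \eqref{T_Lp} was established.
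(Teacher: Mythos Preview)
Your proof is correct and follows exactly the route the paper indicates: the paper's proof is the single sentence ``Using the previous lemma with $h=\om^\e(t,\cdot)$, $p=+\infty$, and thanks to \eqref{u_e}, \eqref{T_Lp}, \eqref{om-est-1}, \eqref{om-est-2} we can deduce directly the following theorem,'' and you have simply written out the algebra and the cancellation of the $\e$-factors in full. The key identity $v^\e=\tfrac{1}{2\pi}DT_\e^t(I_1^\e+\tilde I_2^\e)$ that you derive is precisely what the paper uses (it is restated explicitly later in Remark~\ref{rk-phi}).
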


The difference with \cite{lac_euler} is that we have an estimate $L^p_{\loc}$ only on $v^{\e}$, then we will have to study independently $H_{\e}$. We note also that we cannot obtain $L^\infty$ estimates, and we have to check carefully that we can adapt the tools used in \cite{ift_lop_euler}.

\subsection{Compact support of the vorticity}\

Specifying our choice for $\beta$ in Lemma \ref{renorm1}, we are led to the
following.

\begin{proposition}
\label{compact_vorticity} Let $\om^\e$ be a weak solution of
\eqref{transport} such that
\begin{equation*}
\om_0  \text{ is compactly supported in } B(0,R_0)
\end{equation*}
for some positive $R_0$. Then there exists $C>0$ independent of $\e$ such that
\begin{equation*}
\om^\e(t,\cdot)  \text{ is compactly supported in } B(0,R_0+Ct),
\end{equation*}
for any $t\geq 0$.
\end{proposition}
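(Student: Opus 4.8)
The plan is to propagate the support of $\om^\e$ along the flow of $u^\e$, using the renormalization property (Lemma \ref{renorm1}) rather than a classical characteristics argument, since $u^\e$ is only $W^{1,1}_{\loc}$ and blows up near the endpoints of $\G_\e$. The key point is that $u^\e$ has a uniform-in-$\e$ $L^p_{\loc}$ bound for some $p>2$ (Theorem \ref{4.2} for $v^\e$), and the harmonic part $\g H_\e$ must be handled separately: away from a fixed neighborhood of the shrinking curve it is bounded uniformly in $\e$, which is all we need since the vorticity support starts at positive distance from $\G_\e$ and we only want to control how fast it can expand.

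First I would fix $t$ and let $\rho(t) := \sup\{ |x| : x \in \supp \om^\e(t,\cdot)\}$ (or work with $d(t)$, the distance to the support). The strategy is to test the renormalized equation against a cutoff $\psi$ that equals $1$ outside a ball $B(0,r)$ and vanishes on $B(0,r-1)$, with $\beta$ chosen so that $\beta(s)>0$ for $s\neq 0$ and $\beta(0)=0$ (e.g. a smooth approximation of $s\mapsto s^2$), so that $\int \psi\,\beta(\om^\e)\,dx$ measures the amount of vorticity outside $B(0,r)$. By Remark \ref{remark : conserv}(1), Lemma \ref{renorm1} applies to such bounded $\psi$. Then
\[
\frac{d}{dt}\int_{\R^2}\psi\,\beta(\om^\e)\,dx = \int_{\R^2}\beta(\om^\e)\,u^\e\cdot\na\psi\,dx,
\]
and since $\na\psi$ is supported in the annulus $B(0,r)\setminus B(0,r-1)$, which (for $r$ large relative to $\e$) lies outside a fixed neighborhood of $\G_\e$, on this annulus $|u^\e| = |v^\e + \g H_\e| \leq C(1+r^{2/p}) + C|\g|$ by Theorem \ref{4.2} and the far-field bound $H_\e = O(1/|x|)$, all uniformly in $\e$. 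A Gronwall-type / barrier argument on the "outer support radius" then gives $\rho(t) \leq R_0 + Ct$ with $C$ independent of $\e$: if $\om^\e(\cdot)$ is supported in $B(0,R(t))$, then choosing $r$ slightly larger than $R(t)$ shows the outflux through $\pd B(0,r)$ is controlled by $\sup_{|x|\le r}|u^\e(x)|$, so the support radius grows at rate at most $\esup_{|x|\le R_0+Ct}|u^\e|$, and one closes the estimate provided $C$ is chosen large enough depending only on $\|\om_0\|_{L^1\cap L^\infty}$, the shape of $\G$, $\g$ — but, crucially, not on $\e$.

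A clean way to make the barrier argument rigorous is the following: for $\lambda>0$ to be chosen, introduce the time-dependent cutoff $\psi_t(x)$ depending on $|x| - (R_0+\lambda t)$, apply Lemma \ref{renorm1} with this moving cutoff (the remark permits bounded $\psi$ with bounded space-time derivatives), and pick up the extra term $-\lambda\,\pd_r\psi$ from $\pd_t\psi$. One then shows $\frac{d}{dt}\int \psi_t\,\beta(\om^\e)\,dx \le \int \beta(\om^\e)(|u^\e|-\lambda)|\na\psi_t|\,dx$, which is $\le 0$ as soon as $\lambda \ge \sup\{|u^\e(x)| : |x|\ge R_0+\lambda t\text{-ish}\}$; since that supremum is bounded uniformly in $\e$ (growing only polynomially in the radius, hence controllable on any finite time interval, or absorbable by a slightly superlinear choice of the moving front), one concludes $\int \psi_0\,\beta(\om^\e)\,dx = 0$ forces $\om^\e(t,\cdot)$ to vanish outside $B(0,R_0+Ct)$.

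The main obstacle is the lack of an $L^\infty$ bound on $u^\e$: in the smooth-obstacle case of \cite{ift_lop_euler} one has $|u^\e|\lesssim 1$ on the relevant region, but here Theorem \ref{4.2} only gives $\|v^\e\|_{L^p_{\loc}}$ with $p<4$, so $|u^\e|$ is a priori only $L^p$ on the expanding annulus, not pointwise bounded. However, this is not fatal: the annulus where $\na\psi$ lives is at distance $\gtrsim 1$ from $\G_\e$ once $r$ is large (for small $\e$), and on $B(0,R)^c$ with $\G_\e\subset B(0,R)$ one in fact has $DT_\e$ bounded by $C_R/\e$ from Proposition \ref{biholo-est}(iii) — but more to the point, away from the curve the Biot-Savart kernel is smooth and the estimates of Lemma \ref{I_est} give genuine pointwise (not merely $L^p$) bounds on $I_1^\e[\om^\e]$ and $\tilde I_2^\e[\om^\e]$, so the issue is only the $DT_\e$ prefactor, and on the region $|x|\ge R_0+\lambda t$ that prefactor is itself bounded once $R_0+\lambda t$ exceeds the fixed radius $R$ containing all the $\G_\e$. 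Hence on the outer region $u^\e$ is genuinely bounded uniformly in $\e$ on each finite time interval, and the argument goes through; the only care needed is to verify that the relevant annuli indeed stay outside a fixed ball containing every $\G_\e=\e\G$, which holds because $\G_\e\subset B(0,\e\,\mathrm{diam}\,\G)\subset B(0,R_0)$ for $\e$ small, so in fact $\om^\e(t,\cdot)$ never even reaches $\G_\e$.
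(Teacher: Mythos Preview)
Your approach is essentially the paper's: apply the renormalization lemma with $\beta(t)=t^2$ and a moving cutoff (the paper uses $\Phi_0(|x|/R(t))$ with $R(t)=R_0+Ct$), then show the right-hand side is nonpositive using a uniform-in-$\e$ pointwise bound on $u^\e$ in the region $\{|x|\ge R_0\}$. The paper obtains that bound directly from Lemma \ref{I_est}, the relation $DT_\e(x)=\e^{-1}DT(x/\e)$, and Remark \ref{2.5} (giving $|DT_\e|\le C/\e$ and $|T_\e|\ge C|x|/\e$ for $|x|\ge R_0$, hence $|u^\e(x)|\le C$ there uniformly in $\e$) --- exactly the conclusion you reach in your last paragraph, so your earlier hedging about $L^p$ versus $L^\infty$ and polynomial growth in the radius is unnecessary.
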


\begin{proof} The main computation of this proof can be found in \cite{lac_miot}, but we have to write the details because the velocity has a different form and that we need that $C$ is independent of $\e$. We set $\beta(t)=t^2$ and use Lemma \ref{renorm1} with this choice. Let $\F \in \mathcal{D}(\R^+\times \RR)$. We claim that for all $T$
\begin{equation*}
\int_{\RR} \F(T,x) (\om^\e)^2(T,x)\,dx - \int_{\RR} \F(0,x) (\om^\e)^2(0,x)\,dx 
=\int_0^T\int_{\RR} (\om^\e)^2 (\dt \F +u^\e\cdot \nabla \F)\,dx \,
dt.
\end{equation*}
This is actually an improvement of Lemma \ref{renorm1}, in which the
equality holds in $L^1_{\loc}(\R^+)$.  Indeed, we have $\pd_t \om^\e
=-\diver (u^\e\om^\e)$ (in the sense of distributions) with $\om^\e\in
L^\infty$ and $u^\e \in L^\infty(\R^+,L^q_{\loc}(\RR))$ for all $q<4$,
which implies that $\pd_t \om^\e$ is bounded in
$L^1_{\loc}(\R^+,W^{-1,q}_{\loc}(\RR))$. Hence, $\om^\e$ belongs to
$C(\R^+,W^{-1,q}_{\loc}(\RR))\subset C_{w}(\R^+, L^2_{\loc}(\R^2))$, where $C_{w}(L_{\loc}^{2})$ stands for the space of maps $f$ such that for any sequence $t_n\to t$, the sequence $f(t_n)$ converges to $f(t)$ weakly in $L^2_{\loc}$. Since
on the other hand $t \mapsto \|\om^\e(t)\|_{L^2}$ is continuous by
Remark \ref{remark : conserv}, we have $\om^\e \in C(\R^+,L^2(\RR))$. Therefore the previous integral equality holds for all $T$.

Now, we choose a good test function. We let $\F_0$ be a non-decreasing function on $\R$, which
is equal to $1$ for $s\geq 2$ and vanishes for $s\leq 1$ and we
set $\F(t,x)=\F_0(|x|/R(t))$, with $R(t)$ a smooth, positive
and increasing function to be determined later on, such that
$R(0)=R_0$. For this choice of $\F$, we
have $(\om_0(x))^2 \F(0,x)\equiv 0$.

We compute then
\begin{equation*}
\na \F= \frac{x}{|x|}\frac{\F_0'}{R(t)}
\end{equation*}
and
\begin{equation*}
\pd_t \F = -\frac{R'(t)}{R^2(t)}|x|\F_0'.
\end{equation*}
We obtain
\begin{eqnarray*}
\int_{\RR} \F(T,x) (\om^\e)^2(T,x)\,dx & =&\int_0^T \int_{\RR} (\om^\e)^2 \frac{\F_0'(\frac{|x|}{R})}{R}\Bigl( u^\e(x) \cdot \frac{x}{|x|}-\frac{R'}{R}|x|\Bigl)\, dx\, dt\\
&\leq& \int_0^T \int_{\RR} (\om^\e)^2 \frac{|\F_0'|(\frac{|x|}{R})}{R} (C -R')\, dx\, dt,
\end{eqnarray*}
where $C$ is independent of $\e$. Indeed, we have that 
\[ u^{\e}(t,x)=\dfrac{1}{2\pi}DT_{\e}^t(x)(I_1^\e + \tilde I_2^\e +\g \frac{T_\e(x)^\perp}{|T_\e(x)|^2} )\]
with  $ |I_1^\e + \tilde I_2^\e| \leq C_1 \e$ (see Lemma \ref{I_est}) and $DT_\e(x) = \frac{1}{\e} DT(\frac{x}\e)$. Using Remark \ref{2.5}, we know that there exist some positives $C_3,C_4$ independent of $\e$, such that
\[ |DT(\frac{x}\e) | \leq C_2 |\b| \text{ and } C_4 |\b| \frac{|x|}\e \leq |T(\frac{x}\e)|,\]
for all $|x|\geq R_0$. Putting together all these inequalities, we obtain $C=\frac{1}{2\pi}C_2( |\b| C_1 + \frac{|\g|}{R_0 C_4})$.
Taking $R(t) =R_0 + Ct$, we arrive at
\begin{equation*}
\int_{\RR} \F(T,x) (\om^\e)^2(T,x)\,dx\leq 0,
\end{equation*}
which ends the proof.
\end{proof}

\begin{remark}  We only use in this paper that $(u^\e,\om^\e)$ is a weak solution of the Euler equations outside the curve (see Definition \ref{sol-curve}). If the uniqueness is proved, we could simplify the proofs of \eqref{om-est-1}, \eqref{om-est-2} and Proposition \ref{compact_vorticity}. Indeed, we would say by uniqueness that $\om^\e$ is the weak-$*$ limit of $\F^{\e,\y} \om^{\e,\y}$ with $\OM^{\e,\y}$ defined in Subsection \ref{thicken}. As $ \om^{\e,\y}$ verifies the transport equation in a strong sense, we have:
\begin{itemize}
\item for all $\y$ and $t$, $\| \om^{\e,\y}(t,\cdot) \|_{L^1\cap L^\infty} = \| \om_0 \|_{L^1\cap L^\infty}$, which means that $\| \om^{\e}(t,\cdot) \|_{L^1\cap L^\infty} \leq \| \om_0 \|_{L^1\cap L^\infty}$
which is sufficient;
\item $\om^\e$ is also the weak-$*$ limit of $\h_{\Pi_{\e,\y}} \om^{\e,\y}$, and $\int \h_{\Pi_{\e,\y}} \om^{\e,\y} = \int \om_0$ for all $t$, so we obtain \eqref{om-est-1};
\item it is easy to prove that there exists $C$ independent of $\y$ and $\e$ such that  $ \om^{\e,\y}(t,\cdot)$ is compactly supported in $B(0,R_1+Ct)$, which proves Proposition \ref{compact_vorticity}, using test functions supported in $B(0,R_1+Ct)^c$.
\end{itemize}
\end{remark}

\subsection{Cutoff function}\label{cutoff}\

The function $u^{\e}$ is defined on $\R^2$, but we prefer to multiply it by an ${\e}$-dependent cutoff function for a neighborhood of $\OM_{\e}$. Indeed, $\curl u^\e = \om^\e + g_{\om^\e} \d_{\G_\e}$, so the cutoff function allows us to remove the dirac mass and the jump of the velocity through the curve.

Let $\F\in C^\infty(\R)$ be a non-decreasing function such that $0\leq\F\leq 1$, $\F(s)=1$ if $s\geq 3$ and $\F(s)=0$ if $s\leq 2$. Then we introduce 
$$\F^{\e}=\F^{\e}(x)=\F(|T_{\e}(x)|).$$
Clearly $\F^{\e}$ is $C^\infty(\R^2)$ vanishing in a neighborhood of $\overline{\OM_{\e}}$.

We require some properties of $\na\F^{\e}$ which we collect in the following Lemma.

\begin{lemma}\label{4.4} The function $\F^{\e}$ defined above has the following properties:
\begin{itemize}
\item[(a)] $H_{\e}\cdot \na\F^{\e}\equiv 0$ in $\Pi_{\e}$,
\item[(b)] there exists a constant $C>0$ such that the Lebesgue measure of the support of $\F^{\e}-1$ is bounded by $C{\e^2}$.
\item[(c)] for all $p<4$, there exists a constant $C_p>0$ such that $\|\na \F^{\e} \|_{L^p}\leq \e^{\frac2p -1} C_p$.
\end{itemize}
\end{lemma}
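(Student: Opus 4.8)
The plan is to treat the three items separately: (a) is a pointwise identity, while (b) and (c) both reduce to the observation that, since $T_\e(x)=T(x/\e)$, the "active region" of $\F^\e$ is a dilation by $\e$ of a fixed compact subset of $\Pi$. For (a) I would apply the chain rule to write
$\na\F^\e(x)=\F'(|T_\e(x)|)\,\na_x|T_\e(x)|=\F'(|T_\e(x)|)\,|T_\e(x)|\,\na_x\ln|T_\e(x)|$, so $\na\F^\e$ is everywhere parallel to $\na\ln|T_\e|$; since by definition $2\pi H_\e=\na^\perp\ln|T_\e|=(\na\ln|T_\e|)^\perp$ and $v^\perp\cdot v=0$ for every vector $v$, this gives $H_\e\cdot\na\F^\e\equiv 0$ on $\Pi_\e$ (everything is smooth there since $|T_\e|\ge 1$). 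Equivalently, $\F^\e$ is constant along the level curves of $|T_\e|$, to which $H_\e$ is tangent.

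For (b): $\F^\e-1$ vanishes wherever $|T_\e(x)|\ge 3$, so $\supp(\F^\e-1)\subset\{x\in\Pi_\e:1<|T_\e(x)|\le 3\}=T_\e^{-1}(\{1<|z|\le 3\})$ (up to the null set $\G_\e$). Changing variables $z=T_\e(x)$ and using Proposition \ref{biholo-est}(i) gives $|\supp(\F^\e-1)|\le\int_{\{1<|z|\le 3\}}|\det DT_\e^{-1}(z)|\,dz\le C\e^2$; alternatively, and more directly, this set equals $\e\,T^{-1}(\{1<|z|\le 3\})$, whose measure is $\e^2$ times that of the fixed bounded set $T^{-1}(\{1<|z|\le 3\})$.

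For (c): $\na\F^\e(x)=\F'(|T_\e(x)|)\,\na_x|T_\e(x)|$ is supported in $A_\e:=\{x:2\le|T_\e(x)|\le 3\}=\e A$, where $A:=T^{-1}(\{2\le|z|\le 3\})$, and $|\na_x|T_\e(x)||\le|DT_\e(x)|$. The point I would stress is that $A$ is a \emph{compact subset of the open set} $\Pi=\R^2\setminus\G$: it is disjoint from $\G=T^{-1}(S)$ because it avoids $S=\{|z|=1\}$, hence $T$, being holomorphic on $\Pi$, satisfies $M_1:=\sup_A|DT|<\infty$; in particular the endpoint singularities of $DT$ from Proposition \ref{2.2} play no role here. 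Therefore $|DT_\e(x)|=\tfrac{1}{\e}|DT(x/\e)|\le M_1/\e$ on $A_\e$, and since $|A_\e|=\e^2|A|$,
\[\|\na\F^\e\|_{L^p}^p\le\|\F'\|_{L^\infty}^p\,(M_1/\e)^p\,\e^2\,|A|=C\,\e^{2-p},\]
with $C$ independent of $\e$, whence $\|\na\F^\e\|_{L^p}\le C_p\,\e^{2/p-1}$. This argument in fact works for every $p\in[1,\infty]$; the restriction $p<4$ in the statement is just what is needed later.

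The lemma is thus essentially bookkeeping, and the only place where care is required is in (c): one must check explicitly that the cutoff region $\{2\le|T_\e|\le 3\}$ sits at a fixed positive distance both from $\G$ (so that $DT$ is bounded there, with no endpoint blow-up) and from infinity. That is precisely what legitimizes the crude bound $|DT_\e|\le M_1/\e$ on it, and hence the sharp exponent $\e^{2/p-1}$ rather than the weaker $\e^{-1}$ coming from the global estimate \eqref{T_Lp}.
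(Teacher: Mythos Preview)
Your proof is correct. Parts (a) and (b) are essentially identical to the paper's argument (the paper uses the change of variables via $\det DT_\e^{-1}$ for (b), which is your first alternative).

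For part (c) you take a genuinely different and cleaner route. The paper bounds $|\na\F^\e|\le C|DT_\e|$ and then estimates $\|DT_\e\|_{L^p(\{|T_\e|\le 3\})}$ by the change of variables $y=x/\e$, landing on $\e^{2/p-1}\|DT\|_{L^p(\{|T|\le 3\})}$; since the region $\{|T|\le 3\}$ reaches all the way down to the curve, this last norm is controlled only by the fact that $DT\in L^p_{\loc}$ for $p<4$ (Proposition~\ref{2.2}), and the restriction $p<4$ is genuinely used. You instead observe that the support of $\na\F^\e$ is the smaller set $\{2\le|T_\e|\le 3\}=\e A$ with $A=T^{-1}(\{2\le|z|\le 3\})$ a compact subset of $\Pi$, so that $DT$ is \emph{uniformly bounded} on $A$ and the crude pointwise estimate $|DT_\e|\le M_1/\e$ already gives the result. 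This is more elementary (no need for the $L^p$ integrability of $DT$ near the endpoints) and, as you note, actually yields the bound for every $p\in[1,\infty]$. The paper's approach, by contrast, has the advantage that the same computation also controls $\|DT_\e\|_{L^p}$ on the full set $\{|T_\e|\le 3\}$, which is reused verbatim in Remark~\ref{rk-phi} to bound $\|v^\e\|_{L^p}$ there.
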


\begin{proof}
First, we remark that
\[H_{\e}(x)=\frac{1}{2\pi}\na^\perp \ln |T_{\e}(x)|=\frac{1}{2\pi|T_{\e}(x)|}\na^\perp |T_{\e}(x)|,\]
and
\begin{equation*}
\na \F^{\e}=\F'(|T_{\e}(x)|)\na |T_{\e}(x)|
\end{equation*}
what gives us the first point.

Concerning the second point, the support of $\F^{\e} -1$ is contained in the subset $\{ x\in\Pi_{\e} | 1  \leq |T_{\e}(x)| \leq 3\}$. By Proposition \ref{biholo-est}, the Lebesgue measure can be estimated as follows:
\[\int_{ 1  \leq |T_{\e}(x)| \leq 3}dx=\int_{1  \leq |z| \leq 3} |\det(DT_{\e}^{-1})|(z) dz \leq C_1 {\e^2}.\]

Finally, we have
\[ |\na \F^{\e}(x)| \leq |\F'(|T_\e(x)|)| | DT_\e(x)|,\]
hence,
\[ \| \na \F^{\e} \|_{L^p}\leq C \|DT_\e(x) \|_{L^p(\{x| |T_\e(x)|\leq 3\})}.\]
Using, that $T(z)$ goes to infinity when $|z|\to \infty$, we can state that there exists $R_1>0$ such that $\{ y\in \R^2 | |T(y)|\leq 3\}=T^{-1}(B(0,3)\setminus B(0,1)) \subset B(0,R_1)$. 
We rewrite the computation made in the proof of Proposition \ref{biholo-est}:
\begin{eqnarray*}
 \Bigl( \int_{\{x | |T(x/\e)|\leq 3\}} \bigl| \frac1\e DT(\frac{x}\e) \bigl|^p\, dx\Bigl)^{1/p} &=& \e^{\frac2p - 1} \Bigl( \int_{\{y | |T(y)|\leq 3\}} \bigl|  DT(y) \bigl|^p\, dy\Bigl)^{1/p} \\
&\leq& \e^{\frac2p - 1} \Bigl( \int_{B(0,R_1)} \bigl|  DT(y) \bigl|^p\, dy\Bigl)^{1/p} \\
&\leq& \e^{\frac2p - 1} C_p,
\end{eqnarray*}
which ends the proof.
\end{proof}

\begin{remark}\label{rk-phi} As $v^\e(x)=\frac{1}{2\pi} DT_\e(x) (I_1+\tilde I_2)$, using Lemma \ref{I_est} and the proof of point (c), we can state that
for all $p<4$, there exists a constant $C_p>0$ such that 
\[\|  v^\e(x)  \|_{L^p(\{x| |T_\e(x)|\leq 3\})}\leq \e^{\frac2p} C_p.\]
\end{remark}

In the case where the obstacle is smooth (see \cite{ift_lop_euler}), $DT$ is bounded, which implies that the norm $L^2$ of $\na \F^{\e}$ is bounded. Moreover, in their case, the part of velocity $v^{\e}$ is bounded independently of ${\e}$, so we can prove that the limits of  $v^{\e} \cdot  \na \F^{\e}$ and  $v^{\e} \cdot  \na^\perp \F^{\e}$ is bounded in $L^\infty(L^2_{\loc})$. As $L^2_{\loc}$ is compactly imbedded in $H^{-1}_{\loc}$, we can prove by Aubin-Lions Lemma that the divergence and the curl of $\F^\e v^\e$ is precompact in $C([0,T]; H^{-1}_{\loc} (\R^2))$. Finally the authors of \cite{ift_lop_euler} conclude thanks to the Div-Curl Lemma. 

In our case, let us show that we can apply this argument. We use Lemma \ref{4.4} and Remark \ref{rk-phi} with $p=3$, then
\begin{equation}\label{v-phi-1}
 \|v^{\e} \cdot  \na^\perp \F^{\e}\|_{L^{3/2}}\leq   \|v^\e(x) \|_{L^3(\supp (\na \F^{\e}))} \|\na \F^{\e} \|_{L^3(\supp (\na \F^{\e}))} \leq C\e^{1/3} .
\end{equation}
Similarly, we have 
\begin{equation}\label{v-phi-2}
 \|v^{\e} \cdot  \na \F^{\e}\|_{L^{3/2}}\leq   \|v^\e(x) \|_{L^3(\supp (\na \F^{\e}))} \|\na \F^{\e} \|_{L^3(\supp (\na \F^{\e}))} \leq C\e^{1/3} .
\end{equation}
As $H^1(\R^2)$ is imbedded in $L^{3}(\R^2)$, so $L^{3/2}(\R^2)$ is imbedded in $H^{-1}(\R^2)$, and we could apply Aubin-Lions Lemma. This last computation is an improvement of a naive estimate. Indeed, we would have written that: 
\[ \|v^{\e} \cdot  \na^\perp \F^{\e}\|_{L^{1}} \leq C\|v^{\e}\|_{L^{4}} \|DT_\e \|_{L^{4}_{loc}} \|1 \|_{L^{2}(\supp(\na \F^\e))} \leq C_1\frac1\e \e,\]
assuming that Theorem \ref{4.2} and point (ii) of Proposition \ref{biholo-est} could be applied for $p=4$, which is not true. Even in this limit case, we remark that we can only control the $L^1$ norm of $v^{\e} \cdot  \na^\perp \F^{\e}$, which does not embed in $H^{-1}$ in dimension two. With this estimate, the argument from \cite{ift_lop_euler} falls down. Estimate \eqref{v-phi-1} was established thanks to point (c) of Lemma \ref{4.4} and Remark \ref{rk-phi}. Without this improvement, we would have adapted the arguments from \cite{lac_euler}. However, we choose here to use techniques from \cite{ift_lop_euler}, because it is faster and it is less technical.

As we decompose $u^\e=v^\e+\g H_{\e}$, we have to focus on the harmonic part.

\begin{lemma} \label{H-limit}
Let $H:= x^\perp / (2\pi |x|^2)$ and fix $R>0$. Then,
\[  H_{\e} \to H,\]
strongly in $L^p(B(0,R))$ as $\e\to 0$, for any $p<2$.
\end{lemma}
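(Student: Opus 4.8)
The plan is to exploit the explicit formula $H_\e(x) = \frac{1}{2\pi} DT_\e^t(x) \frac{T_\e(x)^\perp}{|T_\e(x)|^2}$ together with the scaling $T_\e(x) = T(x/\e)$, so that $DT_\e(x) = \frac1\e DT(x/\e)$. After the change of variables $y = x/\e$, one gets $H_\e(x) = \frac{1}{2\pi\e} DT^t(x/\e) \frac{T(x/\e)^\perp}{|T(x/\e)|^2}$, i.e. $H_\e(x) = \frac1\e \Hb(x/\e)$ where $\Hb(y) := \frac{1}{2\pi} DT^t(y) \frac{T(y)^\perp}{|T(y)|^2}$ is the harmonic field outside the fixed curve $\G$. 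The point of Remark \ref{2.5} applied to $T$ is that $T(y) = \b y + h(y)$ with $h'(y) = O(1/|y|^2)$, hence $DT(y) \to \b I$ and $T(y)/|y| \to \b$ as $|y|\to\infty$; consequently $\Hb(y) = \frac{\b^2}{2\pi} \frac{y^\perp}{\b^2|y|^2} + o(1/|y|) = H(y) + O(1/|y|^2)$ for $|y|$ large. Therefore $H_\e(x) = \frac1\e \Hb(x/\e) = \frac1\e\bigl(H(x/\e) + r(x/\e)\bigr) = H(x) + \frac1\e r(x/\e)$, where I have used the exact homogeneity $H(x/\e) = \e H(x)$, and $|r(y)| \le C/|y|^2$ for $|y| \ge R_*$ for some fixed $R_*$ depending only on $\G$.

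The main step is then to estimate $\|H_\e - H\|_{L^p(B(0,R))}$. Split the ball $B(0,R)$ into the region $|x| \le R_* \e$ (near the rescaled curve) and the region $R_* \e \le |x| \le R$. On the outer region, $|H_\e(x) - H(x)| = \frac1\e|r(x/\e)| \le \frac{C}{\e}\cdot \frac{\e^2}{|x|^2} = \frac{C\e}{|x|^2}$; raising to the power $p$ and integrating in polar coordinates over $R_*\e \le |x| \le R$ gives $\int \frac{(C\e)^p}{|x|^{2p}}\,dx \le C_p \e^p \int_{R_*\e}^{R} \rho^{1-2p}\,d\rho$, which for $p < 2$ behaves like $C_p \e^p \cdot \e^{2-2p} = C_p \e^{2-p} \to 0$ (and when $p \le 1$ the integral is bounded by a constant times $\e^p$, also tending to $0$). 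On the inner region $|x| \le R_*\e$, I estimate $\|H_\e\|_{L^p}$ and $\|H\|_{L^p}$ separately: for $H$ one has $\|H\|_{L^p(B(0,R_*\e))}^p = C\int_0^{R_*\e}\rho^{1-p}\,d\rho = C(R_*\e)^{2-p} \to 0$ since $p<2$; for $H_\e$, change variables $y=x/\e$ to get $\|H_\e\|_{L^p(B(0,R_*\e))} = \e^{2/p - 1}\|\Hb\|_{L^p(B(0,R_*))}$, and $\|\Hb\|_{L^p(B(0,R_*))}$ is finite for $p < 4$ because, by the last bullet of Proposition \ref{2.2}, $DT$ (and hence $\Hb$, which is $DT^t$ times a bounded factor away from the endpoints but blows up like the inverse square root of the distance to the endpoints) lies in $L^p_{\loc}$ for $p<4$; thus $\|H_\e\|_{L^p(B(0,R_*\e))} \le C\e^{2/p-1}$, which tends to $0$ for $p<2$. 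Combining the two regions yields $\|H_\e - H\|_{L^p(B(0,R))} \to 0$.

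I expect the only real subtlety to be bookkeeping the behavior near the two endpoints of $\G$ inside the fixed field $\Hb$: there $\Hb$ is genuinely singular (like $1/\sqrt{\text{dist}}$), so one must invoke the $L^p_{\loc}$, $p<4$, integrability from Proposition \ref{2.2} rather than boundedness, and confine that singular contribution to the shrinking ball $B(0,R_*\e)$ where the scaling factor $\e^{2/p-1}$ more than compensates. Away from the endpoints and away from infinity, $\Hb$ is smooth and bounded, so the asymptotic expansion from Remark \ref{2.5} handles the outer region cleanly. One should also note that the restriction $p<2$ is sharp and necessary: $H$ itself is only in $L^p_{\loc}$ for $p<2$, so no better exponent can hold. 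Assembling these pieces gives the claimed strong convergence $H_\e \to H$ in $L^p(B(0,R))$ for every $p<2$.
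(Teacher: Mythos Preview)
Your proof is correct and follows essentially the same strategy as the paper: split $B(0,R)$ into a near region of size $O(\e)$ (you use the round ball $B(0,R_*\e)$, the paper uses the level set $\{|T_\e|\le 2\}$) and a far region, invoke $DT\in L^p_{\loc}$ for $p<4$ from Proposition~\ref{2.2} on the near region after the change of variables $y=x/\e$, and use the expansion $T(y)=\b y+h(y)$ from Remark~\ref{2.5} to get the $O(1/|y|^2)$ remainder on the far region. One trivial slip: at $p=1$ the outer integral $\int_{R_*\e}^R \rho^{-1}\,d\rho$ picks up a factor $\ln(1/\e)$ rather than a constant (as the paper also records), but this does not affect the conclusion.
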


\begin{proof}
The proof is similar than \cite{ift_lop_euler}, because there is the same behaviour at infinity (see Remark \ref{2.5}). However this lemma is stated in \cite{ift_lop_euler} only with $p=1$. We will see in the following subsection that we need for $p=3/2$. For this reason, we rewrite the proof here.

We fix $p<2$ and we decompose:
\begin{eqnarray*}
\| H_{\e}-H\|_{L^p(B(0,R))} &\leq&  \| H_{\e}-H\|_{L^p(B(0,R)\cap\{|T_\e(x)|\geq 2\})} + \| H_{\e}\|_{L^p(\{|T_\e(x)|\leq 2\})} + \| H\|_{L^p(\{|T_\e(x)|\leq 2\})}\\
&:=& \mathcal{I}_1+\mathcal{I}_2+\mathcal{I}_3.
\end{eqnarray*}
From the proof of Lemma \ref{4.4}, we know that the Lebesgue measure of the set $\{|T_\e(x)|\leq 2\}$ tends to zero as $\e\to 0$. Having in mind that $H$ belongs in $L^q_{\loc}$ for $q\in (p,2)$, we can state that $\mathcal{I}_3\to 0$ as $\e\to 0$.

Concerning $\mathcal{I}_2$, we change variables $y=x/\e$:
\begin{eqnarray*}
 \mathcal{I}_2 &=& \Bigl( \int_{\{|T(x/\e)|\leq 2\}} \Bigl| \frac{1}{2\e\pi} DT(x/\e)^t \frac{T(x/\e)^\perp}{|T(x/\e) |^2} \Bigl|^p\, dx\Bigl)^{1/p} \\
 &=&  \Bigl( \int_{\{|T(y)|\leq 2\}} \Bigl| \frac{1}{2\e\pi} DT(y)^t \frac{T(y)^\perp}{|T(y) |^2} \Bigl|^p \e^2\, dy\Bigl)^{1/p}\\
 &\leq& \frac{\e^{\frac{2-p}{p}}}{2\pi} \| DT \|_{L^p(\{|T(y)|\leq 2\})}
 \end{eqnarray*}
 which gives the result because $DT$ belongs to $L^q_{\loc}$ for $q<4$ (see Proposition \ref{2.2}).
 
 For $\mathcal{I}_1$, we use Remark \ref{2.5}: $T(y)=\b y+h(y)$, with $\b\in \R^*$, and $h$ holomorphic such that $|Dh(y)|\leq C/|y|^2$. Changing variables as above, we find:
 \begin{eqnarray*}
 \mathcal{I}_1 &=& \frac{\e^{\frac{2-p}{p}}}{2\pi} \Bigl( \int_{B(0,R/\e)\cap\{|T(y)|\geq 2\}} \Bigl| DT(y)^t \frac{T(y)^\perp}{|T(y) |^2}-\frac{y^\perp}{|y|^2} \Bigl|^p\, dy \Bigl)^{1/p} \\
 &=& \frac{\e^{\frac{2-p}{p}}}{2\pi} \Bigl( \int_{B(0,R/\e)\cap\{|T(y)|\geq 2\}} \Bigl| (\b \mathbb{I}+ Dh^t(y)) \frac{(\b y+h(y))^\perp}{|\b y+h(y) |^2}-\b \mathbb{I} \frac{\b y^\perp}{|\b y|^2} \Bigl|^p\, dy\Bigl)^{1/p} \\ 
 &\leq& \frac{\e^{\frac{2-p}{p}}}{2\pi} \Bigl( \int_{B(0,R/\e)\cap\{|T(y)|\geq 2\}} \Bigl|  Dh^t(y) \frac{(\b y+h(y))^\perp}{|\b y+h(y) |^2} \Bigl|^p\, dy\Bigl)^{1/p}\\
 &&+ \frac{\e^{\frac{2-p}{p}}}{2\pi} \Bigl( \int_{B(0,R/\e)\cap\{|T(y)|\geq 2\}} \Bigl| \b \mathbb{I} \Bigl( \frac{(\b y+h(y))^\perp}{|\b y+h(y) |^2}- \frac{\b y^\perp}{|\b y|^2}\Bigl) \Bigl|^p\, dy\Bigl)^{1/p}\\
 &\leq& C \e^{\frac{2-p}{p}} \Bigl( \int_{\{|T(y)|\geq 2\}} \frac{1}{|y|^{3p}}\, dy\Bigl)^{1/p}+
  C\e^{\frac{2-p}{p}} \Bigl( \int_{B(0,R/\e)\cap\{|T(y)|\geq 2\}} \Bigl(\frac{|h(y)|}{|y||\b y+h(y) |} \Bigl)^p\, dy\Bigl)^{1/p},
\end{eqnarray*}
using \eqref{frac}. If $p\in (1,2)$, we bound the right hand side term by 
\[ C_1 \e^{\frac{2-p}{p}}+ C_2  \e^{\frac{2-p}{p}} \Bigl( \int_{\{|T(y)|\geq 2\}} \frac{1}{|y|^{2p}}\, dy\Bigl)^{1/p}\leq C_3 \e^{\frac{2-p}{p}}\]
which tends to zero if $\e\to 0$.

If $p=1$
we bound the right hand side term by 
\[ C_1 \e+ C_2 \e \ln (R/\e)\]
which also tends to zero if $\e\to 0$.
\end{proof}

Now, we need some estimates of $\om^\e_t$ and $v^\e_t$ in order to use Aubin-Lions Lemma.

\subsection{Temporal estimates}\

Although in our case, the vorticity equation \eqref{transport} is verify in the sense of distribution, we directly see that it also means that $\om^\e_t$ is bounded in $L^\infty([0,T]; W^{-1,1}_{\loc}(\R^2))$. Indeed, we have proved that $v^\e$ and $H_{\e}$ are bounded in $L^\infty(L^1_{\loc})$, whereas $\om^\e$ is bounded in $L^\infty$. We recall from Proposition \ref{compact_vorticity}, that for $T$ fixed, there exists $R_1>0$ such that $\om^\e(t,\cdot)$ is compactly supported in $B(0,R_1)$ for all $0\leq t \leq T$ and $\e>0$. Additionally, $\om^\e_t$ is also compactly supported in the same ball.

Concerning $v^\e_t$, we have to prove that Proposition 4.1 and Corollary 4.1 of \cite{ift_lop_euler} hold true in our case. We introduce the stream function associated to $\om^\e$ by $\p^\e:= G_\e[\om^\e]$, with
\[G_\e[f](x) = \int_{\Pi_\e} G_\e(x,y) f(y)\, dy, \ \forall f\in C^\infty_c(\Pi_\e)\]
(see \eqref{green} for the explicit formula). We note that $K_\e[f]=\na^\perp G_\e[f]$.

\begin{proposition} For each $R,T>0$, there exists a constant $C>0$ independent of $\e$, such that
\[ \Bigl| \int_{\Pi_\e} \f(x)  \p^\e_t(t,x)\, dx \Bigl|\leq C(\| \f\|_{L^1}+\| \f\|_{L^1}^{1/4} \| \f\|_{L^3}^{3/4}),\]
for every $\f\in C_0(\Pi_\e\cap B(0,R))$ and for all $0\leq t\leq T$.
\end{proposition}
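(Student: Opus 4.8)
The plan is to use the vorticity equation to put $\p^\e_t$ in divergence form, reduce the pairing $\int\f\,\p^\e_t$ to an integral against the Biot--Savart kernel, and then close the estimate with the pointwise bounds of Lemma \ref{I_est} together with the uniform $L^p_{\loc}$ bounds already obtained for $v^\e$, $H_\e$ and $DT_\e$.

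\emph{Reduction to divergence form.} Since $\p^\e=G_\e[\om^\e]$ and $\pd_t\om^\e=-\diver(u^\e\om^\e)$ in $\mathcal D'$, I would write $\p^\e_t=G_\e[\pd_t\om^\e]$; pairing with $\f$, using the symmetry $G_\e(x,y)=G_\e(y,x)$ and Fubini, and integrating by parts in $y$, I get
\[ \int_{\Pi_\e}\f\,\p^\e_t\,dx=\int_{\Pi_\e}\om^\e(y)\,(u^\e(y))^\perp\cdot K_\e[\f](y)\,dy, \]
using $\na_y G_\e[\f]=-(K_\e[\f])^\perp$ and the vanishing of the boundary terms ($G_\e[\f]=0$ on $\G_\e$, $u^\e\cdot\hat n=0$ on $\G_\e$, and $u^\e\om^\e$ is compactly supported by Proposition \ref{compact_vorticity}). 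As written these manipulations are formal; they are justified by a density argument as in \cite{ift_lop_euler}, using the time-continuity $\om^\e\in C(\R^+;L^2(\RR))$ established in the proof of Proposition \ref{compact_vorticity} and the fact that all the integrands lie in dual Lebesgue spaces (e.g.\ $u^\e\om^\e\in L^{3/2}_{\loc}$ against $\na G_\e[\f]\in L^3_{\loc}$).

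\emph{Splitting off the harmonic parts.} Using \eqref{u_e} I write $u^\e=v^\e+\g H_\e$ with $v^\e=\frac1{2\pi}DT_\e^t(I_1^\e[\om^\e]+\tilde I_2^\e[\om^\e])$; and since $K_\e[\f]=\frac1{2\pi}DT_\e^t(I_1^\e[\f]-I_2^\e[\f])$ with $-I_2^\e[\f]=\tilde I_2^\e[\f]-m_\f\,T_\e^\perp/|T_\e|^2$, $m_\f:=\int\f$, I write $K_\e[\f]=w^\e_\f-m_\f H_\e$ with $w^\e_\f:=\frac1{2\pi}DT_\e^t(I_1^\e[\f]+\tilde I_2^\e[\f])$. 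Expanding $(u^\e)^\perp\cdot K_\e[\f]$ produces four terms, and the term $-\g m_\f\int\om^\e\,H_\e^\perp\cdot H_\e$ vanishes identically because $a^\perp\cdot a\equiv0$. For the three remaining terms I would use that $\om^\e$ is bounded in $L^\infty$ and, by Proposition \ref{compact_vorticity}, supported in a fixed ball $B(0,R_1)$ with $R_1=R_0+CT$ independent of $\e$; Lemma \ref{I_est} with $p=3$, which gives $|I_1^\e[\f]|,|\tilde I_2^\e[\f]|\le C\e\,\|\f\|_{L^1}^{1/4}\|\f\|_{L^3}^{3/4}$ uniformly in $x$ and $\e$; Hölder on $B(0,R_1)$ with exponents $3$ and $3/2$; and the uniform bounds $\|v^\e\|_{L^3(B(0,R_1))}\le C$ (Theorem \ref{4.2}), $\|H_\e\|_{L^{3/2}(B(0,R_1))}\le C$ (Lemma \ref{H-limit}), $\|DT_\e\|_{L^p(B(0,R_1))}\le C/\e$ for $p\in\{3/2,3\}$ (Proposition \ref{biholo-est}). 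In particular $\|w^\e_\f\|_{L^{3/2}(B(0,R_1))}\le C\e\,\|\f\|_{L^1}^{1/4}\|\f\|_{L^3}^{3/4}\,\|DT_\e\|_{L^{3/2}(B(0,R_1))}\le C\,\|\f\|_{L^1}^{1/4}\|\f\|_{L^3}^{3/4}$, so $\bigl|\int\om^\e(v^\e)^\perp\cdot w^\e_\f\bigr|\le C\|\f\|_{L^1}^{1/4}\|\f\|_{L^3}^{3/4}$; $\bigl|m_\f\int\om^\e(v^\e)^\perp\cdot H_\e\bigr|\le|m_\f|\,\|\om_0\|_\infty\|v^\e\|_{L^3}\|H_\e\|_{L^{3/2}}\le C\|\f\|_{L^1}$; and $\bigl|\g\int\om^\e H_\e^\perp\cdot w^\e_\f\bigr|\le C\e\,\|\f\|_{L^1}^{1/4}\|\f\|_{L^3}^{3/4}\,\|H_\e\|_{L^{3/2}}\|DT_\e\|_{L^3}\le C\|\f\|_{L^1}^{1/4}\|\f\|_{L^3}^{3/4}$. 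Adding the three bounds and using $|m_\f|\le\|\f\|_{L^1}$ gives the claim, with $C$ depending on $R,T$ and the data but not on $\e$.

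\emph{Main obstacle.} The delicate point is that all the available integrability is critical: $v^\e$ and $DT_\e$ lie in $L^p_{\loc}$ only for $p<4$ and $H_\e$ only for $p<2$, so one cannot afford to bound $|u^\e|$ or $|K_\e[\f]|$ crudely by $C|DT_\e|$, which would leave an uncompensated factor $1/\e$. The structural fact that makes the argument close is that every harmonic contribution appears in the form $mH_\e$ rather than with a bare $DT_\e$ (so that the blow-up of $|DT_\e|$ near the vanishing obstacle is damped by $1/|T_\e|$, i.e.\ $|DT_\e|/|T_\e|=2\pi|H_\e|$ is already controlled), while every remaining $DT_\e$ is multiplied by an $I_j^\e$ carrying a compensating factor $\e$ from Lemma \ref{I_est}. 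A secondary technical point is making the integration by parts of the first step rigorous, which relies on the compact support and the $C(\R^+;L^2)$-regularity of $\om^\e$ and a density argument following \cite{ift_lop_euler}.
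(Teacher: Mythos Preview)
Your proof is correct and arrives at exactly the same estimates as the paper, but the organisation is different and somewhat more streamlined. The paper introduces an auxiliary potential $\y:=G_\e[\f]+\tfrac{m_\f}{2\pi}\ln|T_\e|$ with $\Delta\y=\f$, integrates by parts twice in $x$ to obtain $\int\y\,\om^\e_t$ plus boundary terms at infinity, and then treats those boundary terms separately via the functional $L[\om^\e_t]=-\tfrac1{2\pi}\int H_\e^\perp\!\cdot v^\e\,\om^\e$. You instead use Fubini first and then integrate by parts once in the compactly supported variable $y$, so no boundary term at infinity ever appears. Your decomposition $K_\e[\f]=w^\e_\f-m_\f H_\e$ reproduces the paper's split exactly: your two $w^\e_\f$-terms are the paper's $I$ (note $\na^\perp\y=w^\e_\f$), and your term $-m_\f\int\om^\e(v^\e)^\perp\!\cdot H_\e$ is precisely $m_\f L[\om^\e_t]$, i.e.\ the paper's $J$. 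The cancellation $H_\e^\perp\!\cdot H_\e\equiv0$ that you exploit corresponds to the paper's observation that only $v^\e$ survives in $L[\om^\e_t]$. Both approaches then close with the identical ingredients: Lemma~\ref{I_est} with $p=3$, $\|DT_\e\|_{L^p(B(0,R_1))}\le C/\e$, Theorem~\ref{4.2} with $p=3$, and Lemma~\ref{H-limit} with $p=3/2$.

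What your packaging buys is that the contribution which the paper isolates as a boundary term at infinity is kept as an interior integral from the start, so you avoid the asymptotic analysis of $\p^\e_t$ and $\y$ (equations \eqref{psi}--\eqref{na-eta}). The price is that the integration by parts in $y$ must be justified against the non-smooth, non-compactly-supported test function $G_\e[\f]$; your remark that $G_\e[\f]\in W^{1,3}_{\loc}(\R^2)$ (since $K_\e[\f]\in L^3_{\loc}$) together with the compact support of $u^\e\om^\e$ and the $C(\R^+;L^2)$ time-continuity of $\om^\e$ is exactly what is needed here, and matches the level of rigour of the paper's own argument.
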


\begin{proof} We differentiate with respect of $t$ the stream function: $\p^\e_t=G_\e[\om^\e_t]$, which means that
\[ \D \p^\e_t = \om_t^\e \text{ in } \Pi_\e, \text{ and } \p_t^\e=0 \text{ on }\G_\e.\]
To obtain information on the behavior of $\p^\e_t$ at infinity, we use the same argument than \eqref{K-inf} to state that
\begin{equation}\label{psi}
 | \p_t^\e(t,x)-L[\om^\e_t(t,\cdot)](x)| = O(1/|x|) \text{ at infinity},
 \end{equation}
where the functional $L$ is defined by
\[ \zeta\mapsto L[\zeta]:= -\frac1{2\pi} \int_{\Pi_\e} \ln |T_\e(y)| \zeta(y)\, dy,\]
for any test function $\zeta$. The asymptotic behavior is not independent of $\e$, but we will only need that for $\e$ fixed.

Moreover, we recall that \eqref{K-inf} gives
\begin{equation}\label{na-psi}
 |\na \p_t^\e|=|K_\e[\om_t^\e]| =O(1/|x|^2) \text{ at infinity.}
 \end{equation}

Let $\f$ be a fixed test function in $C_0(\Pi_\e\cap B(0,R))$, we define
\[ \y:= G_\e[\f] +\frac{m_\f}{2\pi} \ln |T_\e|,\]
where $m_\f = \int_{\Pi_\e} \f(x)\, dx$. As above, we can remark that $\y$ satisfies
\[ \D \y = \f \text{ in } \Pi_\e, \text{ and } \y=0 \text{ on }\G_\e,\]
\begin{equation}\label{eta}
  \y(x)= \frac{m_\f}{2\pi} \ln |T_\e|(x) + L[\f](x) + O(1/|x|) \text{ at infinity}
\end{equation}
and
\begin{equation}\label{na-eta}
 |\na( \y - \frac{m_\f}{2\pi} \ln |T_\e|)|=|K_\e[\f]| =O(1/|x|^2) \text{ at infinity.}
\end{equation}

We compute
\begin{eqnarray*}
 \int_{\Pi_\e} \f(x) \p^\e_t(t,x)\, dx &=&  \int_{\Pi_\e} \D\y(x) \p^\e_t(t,x)\, dx\\
 &=&  \int_{\Pi_\e} \y(x) \D \p^\e_t(t,x)\, dx +  \int_{\pd \Pi_\e} ( \p^\e_t \na \y - \y \na \p^\e_t)\cdot \hat{n} \, ds\\
 &:=& I + J
 \end{eqnarray*}
where the boundary terms include the terms at infinity.

Using \eqref{transport}\footnote{this equality is given in $\mathcal{D}'(\R^+)$, but it holds for all $t$ (see the proof of Proposition \ref{compact_vorticity}).}, we begin by estimating $I$:
\[I= \int_{\Pi_\e} \y(x) \om^\e_t(t,x)\, dx= \int_{\Pi_\e} \na \y(x)\cdot (v^\e + \g H_{\e}) \om^\e \, dx,\]
then
\[ | I | \leq \| \na \y\|_{L^3(B(0,R_1))} \|v^\e + \g H_{\e}\|_{L^{3/2}(B(0,R_1))} \| \om^\e \|_{L^\infty} \leq C  \| \na \y\|_{L^3(B(0,R_1))},\]
thanks to \eqref{om-est-1} and using again Theorem \ref{4.2} and Lemma \ref{H-limit} with $p=3/2$. Moreover, as we have
\[ \na^\perp \y(x) =\frac1{2\pi} DT_\e^t(x) \bigl( I_1^\e[\f] - I_2^\e[\f] + m_\f \frac{T_\e(x)^\perp}{|T_\e(x)|^2 }\bigl),\]
we can use point (ii) of Proposition \ref{biholo-est} for $p=1$ and Lemma \ref{I_est} for $p=3$ to conclude that
\[ | I | \leq C\frac{1}\e \e \| \f\|_{L^1}^{1/4} \| \f\|_{L^3}^{3/4}.\]

Concerning the boundary terms $J$, we note that the integrals on $\G_\e$ vanish, because $\y=\p_t^\e = 0$ on the curve. Thanks to \eqref{na-psi} and \eqref{eta}, we have
\[  \int_{\pd B(0,R)} \y \na \p^\e_t\, ds \leq C \frac{\ln R}R\]
which tends to zero as $R\to \infty$. Using now \eqref{psi} and \eqref{na-eta}, we obtain
\[ |J| \leq C m_\f |L[\om^\e_t]|\leq C \| \f \|_{L^1}|L[\om^\e_t]| .\]
To finish the proof, we have to estimate $|L[\om^\e_t]|$. Keeping in mind that $H_{\e}(y)= \na^\perp (\ln |T_\e(y)|)$, we compute
\begin{eqnarray*}
L[\om^\e_t] &=& -\frac1{2\pi} \int_{\Pi_\e} \na(\ln |T_\e(y)|)\cdot (v^\e(t,y)+\g H_{\e}(y)) \om^\e(t,y)\, dy \\
&=& -\frac1{2\pi} \int_{\Pi_\e} H_{\e}(y)^\perp \cdot v^\e(t,y) \om^\e(t,y)\, dy \\
|L[\om^\e_t]| &\leq& \| H_{\e}(y) \|_{L^{3/2}(B(0,R_1))}  \| v^\e \|_{L^{3}(B(0,R_1))}  \| \om^\e \|_{L^{\infty}} \leq C
\end{eqnarray*}
using Theorem \ref{4.2} with $p=3$ and Lemma \ref{H-limit} with $p=3/2$.

Putting together the estimates concludes the proof.
\end{proof}

In \cite{ift_lop_euler}, it is sufficient to bound the integral by $ \| \f\|_{L^1}^{1/2} \| \f\|_{L^\infty}^{1/2}$. We will see in the following proposition that we need $\| \f\|_{L^p}$ for some $p<4$ instead of $\| \f\|_{L^\infty}$ (e.g. $p=3$). For this goal, we use in the previous proof Lemma \ref{4.2} for $h\in L^p$ instead of $L^\infty$, which justifies the extension for $p\neq \infty$ in Lemma \ref{4.2}. 

We also see at the end of the previous proof that we cannot write $\| v^\e \|_{L^\infty}$, so it explains why we need the extension for $p>1$ in Lemma \ref{H-limit}.

Thanks to this proposition, we can establish the main result of this subsection.  

\begin{corollary} \label{v_t}
Let $R,T>0$. Then there exists a constant $C=C(R,T)>0$ such that
\begin{equation*}
\|(\F^\e v^\e)_t(t,\cdot)\|_{H^{-3}(B(0,R))}\leq C,
\end{equation*}
for all $\e$ and $0\leq t\leq T$.
\end{corollary}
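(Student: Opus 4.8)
The plan is to estimate $(\F^\e v^\e)_t$ by duality, testing against $\f\in C_c^\infty(B(0,R))$ and reducing everything to the preceding Proposition through a single integration by parts. Since $\F^\e$ is time--independent and, by \eqref{om-est-1}, $v^\e=K_\e[\om^\e]+(\int\om^\e)H_\e$ with $\int\om^\e=\int\om_0$ constant in time, one has $(\F^\e v^\e)_t=\F^\e v^\e_t=\F^\e K_\e[\om^\e_t]=\F^\e\na^\perp\p^\e_t$. Because $\F^\e$ vanishes in a neighbourhood of $\G_\e$, the vector field $\F^\e\f$ is compactly supported in $\Pi_\e$, and since $\p^\e_t=0$ on $\G_\e$ the integration by parts produces no boundary term:
\[ \int_{\R^2}(\F^\e v^\e)_t(t,x)\cdot\f(x)\,dx=\int_{\Pi_\e}\na^\perp\p^\e_t\cdot(\F^\e\f)\,dx=-\int_{\Pi_\e}\p^\e_t\,\curl(\F^\e\f)\,dx. \]
It therefore suffices to bound the last integral by $C(R,T)\|\f\|_{H^3(B(0,R))}$, uniformly in $\e$ and in $t\in[0,T]$; by density of $C_c^\infty(B(0,R))$ in $H^3_0(B(0,R))$ this is exactly the claimed $H^{-3}$ estimate.

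Next I would split $\curl(\F^\e\f)=\F^\e\curl\f+\na^\perp\F^\e\cdot\f=:\phi_1+\phi_2$ and apply the preceding Proposition separately to $\phi_1$ and $\phi_2$ (for $\e$ small both belong to $C_0(\Pi_\e\cap B(0,R))$, since $\{|T_\e|\le 3\}$ shrinks to the origin). For $\phi_1$, using $|\F^\e|\le 1$, the bound $\|\curl\f\|_{L^1(B(0,R))}\le C_R\|\f\|_{H^1}$ and the two--dimensional Sobolev embedding $H^1(\R^2)\hookrightarrow L^3(\R^2)$ (so $\|\curl\f\|_{L^3}\le C\|\f\|_{H^2}$), the Proposition gives $|\int\p^\e_t\phi_1\,dx|\le C\|\f\|_{H^2}$. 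The delicate term is $\phi_2$: here $\na\F^\e$ concentrates on $\{2\le|T_\e|\le 3\}$, a set of measure $O(\e^2)$, and it is \emph{not} bounded in $L^3$ uniformly in $\e$. Lemma \ref{4.4}(c) does, however, supply the two scalings $\|\na\F^\e\|_{L^1}\le C\e$ and $\|\na\F^\e\|_{L^3}\le C\e^{-1/3}$, whence, with $\|\f\|_{L^\infty}\le C\|\f\|_{H^2}$, $\|\phi_2\|_{L^1}\le C\e\|\f\|_{H^2}$ and $\|\phi_2\|_{L^3}\le C\e^{-1/3}\|\f\|_{H^2}$. The Proposition bounds $|\int\p^\e_t\phi_2\,dx|$ by $C(\|\phi_2\|_{L^1}+\|\phi_2\|_{L^1}^{1/4}\|\phi_2\|_{L^3}^{3/4})$, and the point is that the powers match exactly: $\e^{1/4}(\e^{-1/3})^{3/4}=\e^{0}=1$, so this term is also $\le C\|\f\|_{H^2}$, uniformly in $\e$.

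Adding the two contributions yields $|\int_{\R^2}(\F^\e v^\e)_t\cdot\f\,dx|\le C(R,T)\|\f\|_{H^2(B(0,R))}\le C(R,T)\|\f\|_{H^3(B(0,R))}$ for every $\f\in C_c^\infty(B(0,R))$, hence by density $\|(\F^\e v^\e)_t(t,\cdot)\|_{H^{-3}(B(0,R))}\le C(R,T)$ uniformly in $\e$ and $t\in[0,T]$ (in fact one obtains the stronger $H^{-2}$ bound). The main obstacle is precisely $\phi_2=\na^\perp\F^\e\cdot\f$: taken by itself it is controlled only in $L^1$ — a space that does not embed into $H^{-1}$ in dimension two — and its $L^3$ norm blows up like $\e^{-1/3}$; what makes the estimate close is the $L^1\cap L^3$ interpolation structure of the preceding Proposition, whose exponents $1/4,\,3/4$ (coming from the case $p=3$ of Lemma \ref{ift}) are exactly calibrated to the $\e$--scalings of $\|\na\F^\e\|_{L^1}$ and $\|\na\F^\e\|_{L^3}$. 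This is the reason the preceding proposition, and in turn Lemma \ref{I_est}, were formulated with $L^p$, $p<4$, rather than with the $L^\infty$ estimates available in the smooth case of \cite{ift_lop_euler}.
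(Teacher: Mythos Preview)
Your argument is correct and follows essentially the same route as the paper: test by duality, write $(\F^\e v^\e)_t=\F^\e\na^\perp\p^\e_t$, integrate by parts (no boundary terms since $\F^\e\f$ is compactly supported in $\Pi_\e$), split $\curl(\F^\e\f)=\F^\e\curl\f+\na^\perp\F^\e\cdot\f$, and apply the preceding Proposition to each piece using the scalings $\|\na\F^\e\|_{L^1}\le C\e$, $\|\na\F^\e\|_{L^3}\le C\e^{-1/3}$ so that $\e^{1/4}(\e^{-1/3})^{3/4}=1$. The only difference is cosmetic: the paper bounds the two pieces by $\|\curl\zeta\|_{L^\infty}+\|\zeta\|_{L^\infty}$ and then invokes $H^3\hookrightarrow W^{1,\infty}$ in dimension two, whereas you use $H^1\hookrightarrow L^3$ on $\curl\f$ and $H^2\hookrightarrow L^\infty$ on $\f$, landing on the sharper $H^{-2}$ bound you remark on; either suffices for the subsequent Aubin--Lions argument.
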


\begin{proof}
Let $\zeta \in (H_0^3(B(0,R)))^2$. Applying twice the previous proposition, we compute
\begin{eqnarray*}
| \langle \zeta, (\F^\e v^\e)_t (\cdot, t)\rangle | &= & \Bigl| \int \zeta \F^\e \na^\perp \p^\e_t(t,\cdot)\Bigl| = \Bigl| \int \curl(\zeta \F^\e) \p^\e_t(t,\cdot)\Bigl|\\
&=& \Bigl| \int \curl(\zeta) \F^\e \p^\e_t(t,\cdot) +  \int \zeta\cdot \na^\perp \F^\e \p^\e_t(t,\cdot)\Bigl| \\
&\leq& C(\| \curl(\zeta) \F^\e \|_{L^1}+\| \curl(\zeta) \F^\e \|_{L^1}^{1/4} \| \curl(\zeta) \F^\e \|_{L^3}^{3/4})\\
&& + C(\|  \zeta\cdot \na^\perp \F^\e \|_{L^1}+\| \zeta\cdot \na^\perp \F^\e \|_{L^1}^{1/4} \| \zeta\cdot \na^\perp \F^\e \|_{L^3}^{3/4})\\
&\leq & C (\| \curl \zeta\|_{L^\infty}+ \| \zeta\|_{L^\infty}),
\end{eqnarray*}
since $\| \na \F^\e \|_{L^1} \leq C \e$ and $\| \na \F^\e \|_{L^3} \leq C \e^{-1/3}$ (see point (c) of Lemma \ref{4.4}). Sobolev embedding theorem allows us to end the proof.
\end{proof}

We understand now why we need all these estimates in terms of $\| h \|_{L^3}$ instead of $\| h \|_{L^\infty}$. Indeed, we use them with $\na \F^\e$, and we remark in Lemma \ref{4.4} that we cannot obtain estimates in $L^p$ norm for $p\geq 4$, because of $DT$ which blows up at the end-points like the inverse of the square root of the distance.

\section{Passing to the limit}

Thanks to \eqref{v-phi-1}, \eqref{v-phi-2} and the previous corollary, we can exactly apply the arguments from \cite{ift_lop_euler}. In order to simplify the reading, we write the details.

\subsection{Strong compactness for the velocity}\

The principal tool is a parameterized version of Tartar and Murat's Div-Curl Lemma, whose proof can be found in \cite{div-curl}:
\begin{lemma}\label{div-curl}
Fix $T>0$ and let $\{F^\e(t,\cdot)\}$ and $\{G^\e(t,\cdot)\}$ be vector fields on $\R^2$ for $0\leq t\leq T$. Suppose that:
\begin{itemize}
\item[(a)] both $F^\e\to F$ and $G^\e \to G$ weak-$*$ in $L^\infty([0,T]; L^2_{\loc} (\R^2;\R^2))$ and also strongly in $C([0,T]; H^{-1}_{\loc} (\R^2;\R^2))$;
\item[(b)] $\{ \diver F^\e \}$ is precompact in $C([0,T]; H^{-1}_{\loc} (\R^2))$;
\item[(c)] $\{ \curl G^\e \}$ is precompact in $C([0,T]; H^{-1}_{\loc} (\R^2;\R))$.
\end{itemize}
Then $F^\e\cdot G^\e \rightharpoonup F\cdot G$ in $\mathcal{D}'([0,T]\times \R^n)$.
\end{lemma}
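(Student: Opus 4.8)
The plan is the classical proof of the div--curl lemma: localize with a test field, perform a Helmholtz decomposition of $G^\e$ on a ball, convert hypothesis (c) into strong compactness of a stream function via elliptic regularity, and dispose of the remaining curl-free part of $G^\e$ by an integration by parts that activates hypothesis (b). The time variable is carried along as a harmless parameter thanks to the strong $C([0,T];H^{-1}_{\loc})$ convergence asserted in (a).

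First I would fix $\f\in C_c^\infty((0,T)\times\R^2)$ and a ball $B$ with $\supp\f(t,\cdot)\Subset B$ for every $t$; it suffices to prove $\int_0^T\!\int_{\R^2}\f\,F^\e\cdot G^\e\,dx\,dt\to\int_0^T\!\int_{\R^2}\f\,F\cdot G\,dx\,dt$. On $B$, for each $t$ let $\psi^\e(t,\cdot)\in H_0^1(B)$ solve $\Delta\psi^\e=\curl G^\e$ with zero boundary values (well posed since $\curl G^\e(t)$ stays bounded in $H^{-1}(B)$ by (c)). Since $\curl(\na^\perp\psi^\e)=\Delta\psi^\e=\curl G^\e$ on the simply connected set $B$, there is $\pi^\e(t,\cdot)\in H^1(B)$, normalized to zero mean, with $G^\e=\na^\perp\psi^\e+\na\pi^\e$. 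Elliptic regularity on $B$ gives $\|\psi^\e(t)-\psi^{\e'}(t)\|_{H_0^1(B)}\le C\|\curl G^\e(t)-\curl G^{\e'}(t)\|_{H^{-1}(B)}$, so (c) makes $\{\psi^\e\}$ precompact in $C([0,T];H_0^1(B))$; since $G^\e\rightharpoonup G$ forces the limit to solve $\Delta\psi=\curl G$ with zero boundary data, the whole sequence converges: $\psi^\e\to\psi$ strongly in $C([0,T];H_0^1(B))$. Hence $\na\pi^\e=G^\e-\na^\perp\psi^\e$ converges strongly in $C([0,T];H^{-1}(B))$, so by the Ne\v{c}as inequality $\pi^\e\to\pi$ strongly in $C([0,T];L^2(B))$, while $\{\pi^\e\}$ is bounded in $L^\infty([0,T];H^1(B))$, with $\na\pi=G-\na^\perp\psi$.

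Next I would split $F^\e\cdot G^\e=F^\e\cdot\na^\perp\psi^\e+F^\e\cdot\na\pi^\e$ and integrate $\f$ times this over $B$. From (a), for a.e.\ $t$ the field $F^\e(t)$ is bounded in $L^2(B)$ and converges in $H^{-1}(B)$, hence $F^\e(t)\rightharpoonup F(t)$ weakly in $L^2(B)$. The first term is then a weak--strong product in $L^2(B)$ against $\f(t)\na^\perp\psi^\e(t)\to\f(t)\na^\perp\psi(t)$ strongly in $L^2(B)$, so $\int_B\f F^\e\cdot\na^\perp\psi^\e\,dx\to\int_B\f F\cdot\na^\perp\psi\,dx$ pointwise in $t$, with the $t$- and $\e$-uniform bound $C\|F^\e(t)\|_{L^2(B)}\|\na\psi^\e(t)\|_{L^2(B)}$; dominated convergence in $t$ finishes it. For the second term, because $\f\pi^\e\in H_0^1(B)$ I would integrate by parts,
\[\int_B\f\,F^\e\cdot\na\pi^\e\,dx=-\langle\diver F^\e,\f\pi^\e\rangle-\int_B\pi^\e\,F^\e\cdot\na\f\,dx,\]
with $\langle\cdot,\cdot\rangle$ the $H^{-1}(B)$--$H_0^1(B)$ pairing (no boundary term). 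Here $\diver F^\e(t)\to\diver F(t)$ strongly in $H^{-1}(B)$ by (b), the limit again identified via $F^\e\rightharpoonup F$, while $\f(t)\pi^\e(t)\rightharpoonup\f(t)\pi(t)$ weakly in $H_0^1(B)$; and $\pi^\e(t)\na\f(t)\to\pi(t)\na\f(t)$ strongly in $L^2(B)$ against $F^\e(t)\rightharpoonup F(t)$. Both are pointwise in $t$ with $t$- and $\e$-uniform bounds, so dominated convergence applies once more. Reassembling and undoing the integration by parts, the limit equals $\int_0^T\!\int_B\f\,F\cdot(\na^\perp\psi+\na\pi)\,dx\,dt=\int_0^T\!\int_{\R^2}\f\,F\cdot G\,dx\,dt$, as wanted.

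The routine ingredients are the $H_0^1$--$H^{-1}$ elliptic estimate, the Ne\v{c}as inequality, and the bookkeeping of weak--strong products; the only \emph{genuine} point of care is the time variable, namely checking that every ``weak times strong'' pairing holds pointwise in $t$ with a bound uniform in $t$ and $\e$ — which is precisely what the strong $C([0,T];H^{-1}_{\loc})$ convergence in (a), rather than mere weak-$*$ convergence in $L^\infty(L^2_{\loc})$, is there to supply — together with the correct identification of the limits $\psi,\pi$ so that no subsequence extraction is needed.
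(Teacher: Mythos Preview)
Your argument is correct: it is the classical proof of the parameterized div--curl lemma via a Helmholtz decomposition of $G^\e$ on a ball, elliptic regularity to upgrade hypothesis (c) into strong $C([0,T];H_0^1)$ compactness of the stream function, and an integration by parts against $\f\pi^\e$ that activates hypothesis (b) on the gradient part. The handling of the time variable through pointwise-in-$t$ weak--strong pairings with uniform bounds, justified by the strong $C([0,T];H^{-1}_{\loc})$ convergence in (a), is exactly the right use of that hypothesis.

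There is, however, nothing in the paper to compare your proof against: the author does not prove this lemma but states it and refers to \cite{div-curl} (Lopes Filho, Nussenzveig Lopes, Tadmor) for the argument. You have supplied a self-contained proof where the paper defers to the literature; your route is essentially the one found in that reference.
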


We will use the Div-Curl Lemma with $F^\e=G^\e=\F^\e v^\e$. For that, we check now that the three points of this lemma  are verified.

For point (a), we know from Theorem \ref{4.2} that $\{ \F^\e v^\e \}$ is bounded in $L^\infty([0,T]; L^2_{\loc} (\R^2))$. Moreover, thanks to Corollary \ref{v_t} we know that $\{ \F^\e v^\e \}$ is equicontinuous from $[0,T]$ to $H^{-3}_{\loc}$. Then we can apply Aubin-Lions Lemma (see \cite{temam}) to state that $\{ \F^\e v^\e \}$ is precompact in $C([0,T]; H^{-1}_{\loc} (\R^2))$. Passing to a subsequence if necessary, we conclude that there exists $v\in L^\infty([0,T]; L^2_{\loc}) \cap C([0,T]; H^{-1}_{\loc} )$ such that
\begin{equation*}
\F^\e v^\e \to v
\end{equation*}
weak-$*$ in $L^\infty([0,T]; L^2_{\loc})$ and strongly in $C([0,T]; H^{-1}_{\loc})$.

For point (b), we start by remarking that $(\diver (\F^\e v^\e))_t=\diver (\F^\e v^\e)_t$ is bounded in $L^\infty([0,T]; H^{-4}_{\loc})$ (see Corollary \ref{v_t}). Moreover, we know that
\[ \diver (\F^\e v^\e) = v^\e\cdot \na \F^\e\]
is bounded in $L^\infty([0,T]; L^{3/2})$ (see  \eqref{v-phi-2}). Since $L^{3/2}_{\loc}$ is compactly imbedded in $H^{-1}_{\loc}$, we can again apply Aubin-Lions Lemma to conclude that the divergence is precompact in $C([0,T]; H^{-1}_{\loc})$.

Finally, we do the same thing with the curl:
\begin{itemize}
\item $(\curl (\F^\e v^\e))_t=\curl (\F^\e v^\e)_t$ is bounded in $L^\infty([0,T]; H^{-4}_{\loc})$;
\item $\curl (\F^\e v^\e) = \F^\e \om^\e + v^\e\cdot \na^\perp \F^\e$ is bounded in $L^\infty([0,T]; L^{3/2})$
\end{itemize}
then  the curl is precompact in $C([0,T]; H^{-1}_{\loc})$.

Therefore, we can apply Lemma \ref{div-curl} to ensure that $| \F^\e v^\e|^2 \rightharpoonup |v|^2$ in $\mathcal{D}'$, which implies the following theorem.

\begin{theorem} \label{v-limit}
For all $T>0$, we can extract a subsequence $\e_k \to 0$ such that $\F^\e v^\e \to v$ strongly in $L^2_{\loc}([0,T]\times \R^2)$.
\end{theorem}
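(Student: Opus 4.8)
The plan is to upgrade the weak-$*$ convergence $\F^\e v^\e \rightharpoonup v$ in $L^\infty([0,T];L^2_{\loc})$ (already obtained when checking point (a) of Lemma \ref{div-curl}) to strong $L^2_{\loc}$ convergence, using the Div-Curl conclusion $|\F^\e v^\e|^2 \rightharpoonup |v|^2$ in $\mathcal{D}'([0,T]\times\R^2)$ together with the elementary Hilbert-space fact that weak convergence plus convergence of the norms implies strong convergence. Everything substantial (Theorem \ref{4.2}, Corollary \ref{v_t}, and the verification of hypotheses (a)--(c) of Lemma \ref{div-curl}) has already been done; only a localization argument remains, carried out along the subsequence $\e_k$ fixed in point (a).

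Concretely, I would fix $R>0$ and choose a nonnegative cutoff $\f\in C_c^\infty([0,T]\times\R^2)$ with $\f\equiv 1$ on $[0,T]\times\overline{B(0,R)}$, say $\supp\f\subset[0,T]\times B(0,R')$. By Theorem \ref{4.2} the family $\F^\e v^\e$ is bounded in $L^\infty([0,T];L^2(B(0,R')))\subset L^2([0,T]\times B(0,R'))$, and the weak-$*$ convergence in $L^\infty(L^2_{\loc})$ yields $\sqrt{\f}\,\F^\e v^\e \rightharpoonup \sqrt{\f}\,v$ weakly in $L^2([0,T]\times\R^2)$. On the other hand, testing $|\F^\e v^\e|^2\rightharpoonup|v|^2$ against $\f\ge 0$ gives
\[
\|\sqrt{\f}\,\F^\e v^\e\|_{L^2([0,T]\times\R^2)}^2=\int_0^T\!\!\int_{\R^2}\f\,|\F^\e v^\e|^2\,dx\,dt \longrightarrow \int_0^T\!\!\int_{\R^2}\f\,|v|^2\,dx\,dt=\|\sqrt{\f}\,v\|_{L^2([0,T]\times\R^2)}^2 .
\]
Since weak convergence together with convergence of norms forces strong convergence in the Hilbert space $L^2([0,T]\times\R^2)$, we get $\sqrt{\f}\,\F^\e v^\e\to\sqrt{\f}\,v$ strongly there, and restricting to the set $\{\f=1\}$ gives $\F^\e v^\e\to v$ strongly in $L^2([0,T]\times B(0,R))$.

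Finally, since $R$ is arbitrary and the subsequence $\e_k$ was fixed once and for all in point (a) of Lemma \ref{div-curl}, applying the previous step with $R=1,2,3,\dots$ along this single subsequence yields convergence in $L^2(K)$ for every compact $K\subset[0,T]\times\R^2$, that is, $\F^\e v^\e\to v$ strongly in $L^2_{\loc}([0,T]\times\R^2)$. I do not expect a genuine obstacle in this last argument: the only mild points of care are that $\f$ must be taken nonnegative so that $\|\sqrt{\f}\,\cdot\|_{L^2}$ is a genuine seminorm, and that one works along the already-extracted subsequence rather than re-extracting for each $R$ — all the real difficulty lies upstream in the a priori estimates.
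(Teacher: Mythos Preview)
Your proposal is correct and follows exactly the paper's approach: the paper verifies hypotheses (a)--(c) of Lemma \ref{div-curl} with $F^\e=G^\e=\F^\e v^\e$, obtains $|\F^\e v^\e|^2\rightharpoonup |v|^2$ in $\mathcal{D}'$, and then simply asserts that this ``implies the following theorem''. You have merely spelled out the standard Hilbert-space argument (weak convergence plus convergence of norms gives strong convergence, localized by a nonnegative cutoff) that the paper leaves implicit.
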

By a diagonal extraction, we have a subsequence $\e_k \to 0$ such that the convergence holds in $L^2_{\loc}(\R^+\times \R^2)$.

\subsection{The asymptotic vorticity equation}\ 

We begin by observing that the sequence $\{\F^\e\om^\e\}$ is bounded in $L^\infty(\R^+\times \R^2)$, then, passing to a subsequence if necessary, we have 
\[\F^\e\om^\e\rightharpoonup \om\text{, weak-$*$ in }L^\infty(\R^+\times \R^2).\]
We already have a limit velocity: $u:= v+\g H$.

The purpose of this section is to prove that $u$ and $\om$ verify, in an appropriate sense, the system:
\begin{equation}
\left\lbrace \begin{aligned}
\label{tour_equa}
&\pd_t \om+u\cdot \na\om=0, & \text{ in } (0,\infty) \times\R^2 \\
& \diver u=0 \text{ and }\curl u=\om+\g \d, &\text{ in }(0,\infty) \times\R^2  \\
& |u|\to 0, &\text{ as }|x|\to \infty \\
& \om(0,x)=\om_0(x), &\text{ in }\R^2.
\end{aligned} \right .
\end{equation}
where $\d$ is the Dirac function centered at the origin.

\begin{definition} The pair $(u,\om)$ is a weak solution of the previous system if
\begin{itemize}
\item[(a)] for any test function $\f\in C^\infty_c([0,\infty)\times\R^2)$ we have 
\[ \int_0^\infty\int_{\R^2}\f_t\om dxdt +\int_0^\infty \int_{\R^2}\na\f\cdot u\om dxdt+\int_{\R^2}\f(0,x)\om_0(x)dx=0,\]
\item[(b)] we have $\diver u=0$ and $\curl u=\om+\g \d$ in the sense of distributions of $\R^2$, with $|u|\to 0$ at infinity.
\end{itemize}
\end{definition}

\begin{theorem} The pair $(u,\om)$ obtained at the beginning of this subsection is a weak solution of the previous system.
\end{theorem}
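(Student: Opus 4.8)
The plan is to verify conditions (a) and (b) of the definition by passing to the limit $\e\to0$ in the identities already satisfied by $(u^\e,\om^\e)$, using the compactness assembled so far: $\F^\e v^\e\to v$ strongly in $L^2_{\loc}(\R^+\times\R^2)$ (Theorem \ref{v-limit}), $\F^\e\om^\e\rightharpoonup\om$ weak-$*$ in $L^\infty(\R^+\times\R^2)$, $H_\e\to H$ strongly in $L^{3/2}_{\loc}$ (Lemma \ref{H-limit}), together with the sharp estimates near $\G_\e$ of Lemma \ref{4.4}, Remark \ref{rk-phi} and \eqref{v-phi-1}--\eqref{v-phi-2}. Throughout I use that $\om^\e(t,\cdot)$ stays in a fixed ball on $[0,T]$ (Proposition \ref{compact_vorticity}), that $\|(1-\F^\e)\om^\e\|_{L^1}\leq C\e^2$ (Lemma \ref{4.4}), and that $\F^\e\equiv1$ outside a fixed ball for $\e$ small (since $|T(y)|\to\infty$ by Remark \ref{2.5}). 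For (b), since $\diver v^\e=0$, $\curl v^\e=\om^\e$ on $\Pi_\e$ while $\F^\e$ vanishes near $\G_\e$, one has in $\mathcal D'(\R^2)$
\[
\diver(\F^\e v^\e)=v^\e\cdot\na\F^\e,\qquad \curl(\F^\e v^\e)=\F^\e\om^\e+v^\e\cdot\na^\perp\F^\e .
\]
The two commutator terms tend to $0$ in $L^{3/2}(\R^2)$ by \eqref{v-phi-1}--\eqref{v-phi-2}, hence in $\mathcal D'$; passing to the limit with $\F^\e v^\e\to v$ and $\F^\e\om^\e\rightharpoonup\om$ gives $\diver v=0$ and $\curl v=\om$ in $\mathcal D'(\R^2)$. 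Since $H=\na^\perp(\tfrac1{2\pi}\ln|x|)$ satisfies $\diver H=0$, $\curl H=\d$, the decomposition $u=v+\g H$ yields $\diver u=0$ and $\curl u=\om+\g\d$.

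For the behaviour at infinity I go back to the explicit Biot--Savart formula: by \eqref{biot} and \eqref{om-est-1}, $v^\e=K_\e[\om^\e]+mH_\e$ with $m=\int\om_0$. On $\supp\om^\e$ one has $|T_\e(y)-T_\e(y)^*|\leq2|T_\e(y)|\leq C/\e$, and for $|x|$ large $|T_\e(x)-T_\e(y)|,|T_\e(x)-T_\e(y)^*|\geq c|x|/\e$; combined with \eqref{frac} exactly as in the proof of Lemma \ref{I_est}, this gives $|K_\e[\om^\e](x)|\leq C/|x|^2$, while $|H_\e(x)|\leq C/|x|$, both uniformly in $\e$. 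Hence $|v^\e(x)|\leq C/|x|$ for $|x|\geq R_0$, uniformly in $\e<1$; passing to the a.e.\ limit (where $\F^\e v^\e=v^\e$ for $\e$ small) gives $|v(t,x)|\leq C/|x|$ for a.e.\ $|x|\geq R_0$, so $v(t,\cdot)\to0$ at infinity, and since $|H|=O(1/|x|)$ as well, $|u|\to0$ at infinity.

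For (a), $(u^\e,\om^\e)$ being a weak solution outside $\G_\e$ in the sense of Definition \ref{sol-curve},
\[
\int_0^\infty\!\!\int_{\R^2}\bigl(\f_t\,\om^\e+\na\f\cdot u^\e\,\om^\e\bigr)\,dx\,dt+\int_{\R^2}\f(0,x)\,\om_0(x)\,dx=0
\]
for every $\f\in C^\infty_c([0,\infty)\times\R^2)$, and it remains to pass to the limit termwise. First, $\int\!\int\f_t\om^\e=\int\!\int\f_t\,\F^\e\om^\e+O(\e^2)\to\int\!\int\f_t\,\om$ by weak-$*$ convergence (as $\f_t\in L^1$). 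For the nonlinear term split $u^\e=v^\e+\g H_\e$. Writing $v^\e\om^\e=(\F^\e v^\e)(\F^\e\om^\e)+v^\e\om^\e(1-(\F^\e)^2)$, the remainder is $O(\e^2)$ in $L^1$ since $\|v^\e\|_{L^1(\{|T_\e|\leq3\})}\leq C\e^2$ (Remark \ref{rk-phi}, $p=1$), while $\int\!\int\na\f\cdot(\F^\e v^\e)(\F^\e\om^\e)\to\int\!\int\na\f\cdot v\,\om$ because $\na\f\cdot(\F^\e v^\e)\to\na\f\cdot v$ strongly in $L^2$ and $\F^\e\om^\e\rightharpoonup\om$ weak-$*$ in $L^\infty$. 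Similarly $H_\e\om^\e=H_\e(\F^\e\om^\e)+H_\e(1-\F^\e)\om^\e$, the last term being $O(\e)$ in $L^1$ (the $\mathcal I_2$ estimate of Lemma \ref{H-limit} with $p=1$), whereas $\na\f\cdot H_\e\to\na\f\cdot H$ strongly in $L^{3/2}\subset L^1_{\loc}$ against $\F^\e\om^\e\rightharpoonup\om$, so $\int\!\int\na\f\cdot H_\e(\F^\e\om^\e)\to\int\!\int\na\f\cdot H\,\om$. Adding, $\int\!\int\na\f\cdot u^\e\om^\e\to\int\!\int\na\f\cdot(v+\g H)\om=\int\!\int\na\f\cdot u\,\om$; the initial-data term is $\e$-independent. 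This proves (a) and completes the proof.

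I expect the bulk of the work to be routine once the Div--Curl Lemma has delivered the strong $L^2_{\loc}$ compactness of $\F^\e v^\e$, so the \textbf{main obstacle} is really the bookkeeping around the cutoff: $u^\e$ itself is never truncated (only $v^\e$ is, $H_\e$ being treated separately), so one must insert and remove $\F^\e$ inside $u^\e\om^\e$ and inside $\om^\e$ and control the errors --- and this is possible only thanks to the $L^p$, $p<4$, bounds on $DT_\e$, $v^\e$ and $H_\e$ near $\G_\e$ (Lemma \ref{4.4}, Remark \ref{rk-phi}, Lemma \ref{H-limit}), precisely the point where the present analysis must improve on the $L^\infty$-based computations of \cite{ift_lop_euler}. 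The second step needing its own argument is the decay $|u|\to0$ at infinity, about which the Div--Curl machinery says nothing and for which one returns to the explicit Biot--Savart formula.
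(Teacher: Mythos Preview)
Your proof is correct and close in spirit to the paper's, but the bookkeeping around the cutoff is organised differently. For part (b) and for the decay at infinity you do exactly what the paper does. For part (a), the paper does not start from the bare weak formulation and peel off $(1-\F^\e)$-errors as you do; instead it plugs the modified test function $\f(\F^\e)^2$ into \eqref{transport}, which produces the single commutator $2\f\F^\e\,\na\F^\e\cdot u^\e\,\om^\e$. The $H_\e$ contribution of this commutator vanishes \emph{identically} thanks to the orthogonality $\na\F^\e\cdot H_\e=0$ (Lemma \ref{4.4}(a)), and the remaining $v^\e$ contribution is $O(\e^{1/3})$ by Lemma \ref{4.4}(c) and Theorem \ref{4.2}. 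Your route bypasses that orthogonality trick entirely: you never form $\na\F^\e\cdot H_\e$, and instead control $H_\e(1-\F^\e)\om^\e$ and $v^\e(1-(\F^\e)^2)\om^\e$ directly via the $L^1$ smallness of $H_\e$ and $v^\e$ on $\{|T_\e|\leq 3\}$ (the $\mathcal I_2$ computation of Lemma \ref{H-limit} and Remark \ref{rk-phi} with $p=1$). This is a legitimate alternative and arguably more elementary, since it uses only size estimates rather than the exact algebraic identity $\na\F^\e\cdot H_\e=0$; the price is that you must invoke Remark \ref{rk-phi} and the proof of Lemma \ref{H-limit} with $p=1$, whereas the paper only needs the already-stated \eqref{v-phi-1}--\eqref{v-phi-2}.
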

\begin{proof} The velocity $u$ satisfies $|u|\to 0$ at infinity because the convergence of $\F^\e u^\e$ to $u$ is uniform outside a ball containing the origin, as can be checked directly by the explicit expressions for $K_\e[\om^\e]$ and $H_{\e}$, using the uniform compact support of $\om^\e$.

Moreover, using \eqref{v-phi-1}, \eqref{v-phi-2}, and $\diver H=0$, $\curl H = \d$, we obtain directly the point (b).

Next, we introduce an operator $I_\e$, which for a function $\f\in C^\infty_0([0,\infty)\times \R^2)$ gives:
\[ I_\e[\f]:= \int_0^\infty\int_{\R^2} \f_t(\F^\e)^2\om^\e dxdt+\int_0^\infty\int_{\R^2} \na\f\cdot (\F^\e u^\e)(\F^\e\om^\e) dxdt.\]
To prove that $(u,\om)$ is a weak solution, we will show that
\begin{itemize} 
\item[(i)] $I_\e[\f]+\int_{\R^2}\f(0,x)\om_0(x)dx\to 0$ as $\e\to 0$
\item[(ii)] $I_\e[\f]\to \int_0^\infty\int_{\R^2}\f_t\om dxdt +\int_0^\infty\int_{\R^2}\na\f\cdot u\om dxdt$ as $\e\to 0$.
\end{itemize}
Clearly these two steps complete the proof.

We begin by showing (i). As $u^\e$ and $\om^\e$ verify  \eqref{transport}, it can be easily seen that
\[ \int_0^\infty\int_{\R^2} \f_t(\F^\e)^2\om^\e dxdt =-\int_0^\infty\int_{\R^2} \na(\f (\F^\e)^2)\cdot u^\e\om^\e dxdt-\int_{\R^2} \f(0,x)(\F^\e)^2(x)\om_0(x) dx.\]
Thus we compute
\begin{eqnarray*}
 I_\e[\f] &=& -2\int_0^\infty\int_{\R^2} \f\na\F^\e\cdot  u^\e(\F^\e\om^\e) dxdt-\int_{\R^2} \f(0,x)(\F^\e)^2(x)\om_0(x) dx \\
 &=& -2\int_0^\infty\int_{\R^2} \f\na\F^\e\cdot  v^\e(\F^\e\om^\e) dxdt-\int_{\R^2} \f(0,x)(\F^\e)^2(x)\om_0(x) dx
 \end{eqnarray*}
 because $\na\F^\e\cdot H_{\e} =0$ (see point (i) of Lemma \ref{4.4}). By Lemma \ref{4.4} and Theorem \ref{4.2}, we have
\[ \Bigl|I_\e[\f]+\int_{\R^2} \f(0,x)(\F^\e)^2(x)\om_0(x) dx \Bigl|\leq 2 \|\F^\e\om^\e\|_{L^\infty (L^\infty)}\|\f\|_{L^1(L^\infty)}\| v^\e \|_{L^\infty(L^3)} \| \na \F^\e\|_{L^\infty(L^{3/2})}\leq C \e^{1/3},\]
which tends to zero as $\e\to 0$. This shows (i) for all $\e$ sufficiently small such that $(\F^\e)^2(x)\om_0=\om_0$, since the support of $\om_0$ does not intersect the curve.

For (ii), the linear term presents no difficulty. The second term consists of the weak-strong convergence of the pair vorticity-velocity:
\begin{eqnarray*}
\Bigl|\int\int\na\f\cdot (\F^\e u^\e)(\F^\e\om^\e)-\int\int\na\f\cdot  u\om\Bigl| &\leq& \Bigl|\int\int\na\f\cdot (\F^\e u^\e-u)(\F^\e\om^\e)\Bigl| \\
&&+\Bigl|\int\int\na\f\cdot u(\F^\e\om^\e-\om)\Bigl|.
\end{eqnarray*}
Writing $\F^\e H_\e-H=(\F^\e-1)H_\e+(H_\e-H)$ and using Theorem \ref{v-limit}, Lemmas \ref{H-limit} and \ref{4.4}, we can easily show that $\F^\e u^\e\to u$ strongly in $L^1_{\loc}(\R^+\times \R^2)$. So the first term tends to zero because $\F^\e\om^\e$ is bounded in $L^\infty(\R^+\times \R^2)$ (see \eqref{om-est-2}). In the same way, the second term tends to zero because $\F^\e\om^\e\rightharpoonup \om\text{ weak-$*$ in }L^\infty(\R^+\times \R^2)$ and $u\in L^1_{\loc}(\R^+\times \R^2)$.

Its ends the proof.
\end{proof}

Extracting again a subsequence, we can write the convergences without cutoff function. Indeed, $\|\om^\e\|_{L^\infty(\R^+\times \R^2)}$ is uniformly bounded, then we extract such that $\om^\e \rightharpoonup \om\text{, weak-$*$ in }L^\infty(\R^+\times \R^2)$. Next, for any  $T>0$ and $K$ compact set of $\R^2$, we write
\begin{eqnarray*}
 \| u^\e-u\|_{L^1([0,T]\times K)} &\leq& T \|1-\F^\e\|_{ L^2 (\R^2)} \| v^\e \|_{L^\infty ([0,T],  L^2(K))} + C_K \| \F^\e v^\e-v\|_{L^1([0,T]\times K)}\\
 &&+  T \| H_{\e}- H \|_{L^1(K)}
 \end{eqnarray*}
which tends to zero by Lemma \ref{4.4}, Theorem \ref{4.2}, Theorem \ref{v-limit} and Lemma \ref{H-limit}. Therefore, it means that $u^\e \to u$ in $L^1_{\loc}(\R^+\times \R^2)$.

Moreover,  \cite{lac_miot} establishes that the solution of \eqref{tour_equa} is unique. Although we have extracted a subsequence, we can conclude that all the sequence $(u^\e,\om^\e)$ tends to the unique pair $(u,\om)$ solution of  \eqref{tour_equa}. It ends the proof of Theorem \ref{main}.

For completeness, the reader should read Subsection 5.3 of \cite{ift_lop_euler}, concerning the asymptotic velocity equation.

\section*{Acknowledgments}

I want to thank the researchers from Nantes university for pointing out the interest of the small curve problem. I also want to warmly thank Donald E. Marshall for suggesting me \cite{war}.

\section*{Annexe}

\subsection*{Extension of Proposition \ref{2.2}}\

We prove here an extension of Proposition 2.2 from \cite{lac_euler}:

\begin{proposition}
If $\G$ is a $C^{3}$ Jordan arc, such that the intersection with the segment $[-1,1]$ is a finite union of segments and points, then there exists a unique biholomorphism $T:\Pi\to \inte\ D^c$ which verifies the following properties:
\begin{itemize}
\item $T(\infty)=\infty$ and $T'(\infty)\in \R^+_*$;
\item $T^{-1}$ and $DT^{-1}$ extend continuously up to the boundary, and $T^{-1}$ maps $S$ to $\G$;
\item $T$ extends continuously up to $\G$ with different values on each side of $\G$;
\item $D T$ extends continuously up to $\G$ with different values on each side of $\G$, except at the endpoints of the curve where $D T$ behaves like the inverse of the square root of the distance;
\item $D^2 T$ extends continuously up to $\G$ with different values on each side of $\G$, except at the endpoints of the curve where $D^2 T$ behaves like the inverse of the power $3/2$ of the distance.
\end{itemize}
\end{proposition}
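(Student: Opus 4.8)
The plan is to follow the proof of Proposition \ref{2.2} given in \cite{lac_euler}, carrying every boundary estimate one order of differentiation further; gaining this extra order is precisely what the passage from a $C^2$ to a $C^3$ arc buys. The structural idea is to factor the map as $T=G^{-1}\circ\Phi$, where $G(w)=\frac12\bigl(w+\frac1w\bigr)$ is the Joukowski map — a biholomorphism of the exterior $\inte\ D^c$ of the closed unit disk onto $\hat\C\setminus[-1,1]$ (the Riemann sphere minus the segment) fixing $\infty$ — and $\Phi$ is the biholomorphism of $\Pi=\hat\C\setminus\G$ onto $\hat\C\setminus[-1,1]$ that fixes $\infty$, sends the two endpoints of $\G$ to $\pm1$, and is normalized by $\Phi'(\infty)\in\R^+_*$. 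Existence and uniqueness of $\Phi$, hence of $T$ after composing with a rotation and invoking Remark \ref{T-unique}, follow from the Riemann mapping theorem on the sphere together with the matching of prime ends at the two slit tips. Everything in the statement then reduces to a boundary regularity statement for $\Phi$ and $\Phi^{-1}$, combined with the completely explicit behaviour of $G$ and $G^{-1}$.

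For the explicit part: $G$ is a rational map whose only pole is at $\infty$, so $G$ and all its derivatives extend continuously to $\overline{\inte\ D^c}$; on $S$ it is two-to-one onto $[-1,1]$, and $G'(w)=\frac{(w-1)(w+1)}{2w^2}$ has simple zeros exactly at $w=\pm1$. From $G(w)\mp1=\frac{(w\mp1)^2}{2w}$ one reads off, near $z=\pm1$, that $w\mp1=\pm\sqrt{2(z\mp1)}\,(1+o(1))$ for $z=G(w)$; hence $G^{-1}$ extends continuously to $[-1,1]$ with two distinct values off $\pm1$, is real-analytic away from $\pm1$, and near $z=\pm1$ satisfies $|DG^{-1}(z)|\sim C|z\mp1|^{-1/2}$ and $|D^2G^{-1}(z)|\sim C'|z\mp1|^{-3/2}$.

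The substantive step — and the main obstacle — is the boundary regularity of $\Phi$. Away from the endpoints, $\pd\Pi=\G$ is a $C^3$ (hence $C^{2,\alpha}$) arc, and the Kellogg--Warschawski theorem (Theorem 3.6 of \cite{pomm-2}), applied on each side of $\G$, gives that $\Phi$ extends to a $C^{2,\alpha}$ map up to $\G\setminus\{\text{endpoints}\}$ with non-vanishing derivative and distinct one-sided values, and similarly $\Phi^{-1}$ up to $[-1,1]\setminus\{\pm1\}$. At a slit tip one removes the singularity by a local holomorphic square root: near an endpoint $z_0$ of $\G$, after translating and rotating so the tangent points in a fixed direction, the branch $(\Phi(z)-1)^{1/2}$ is a conformal map of a one-sided neighbourhood of $z_0$ in $\Pi$ whose relevant boundary is the arc $\G$ unfolded once — a $C^{2}$ (indeed $C^{2,\alpha}$ thanks to the $C^3$ hypothesis) boundary to which the same regularity theory applies; unwinding this shows $\Phi(z)-1=a(z-z_0)+O(|z-z_0|^{2})$ near $z_0$ with $a\neq0$, i.e. $\Phi$ is $C^1$ up to the tip with non-zero derivative, and likewise $\Phi^{-1}$ near $1$ and $-1$. (The hypothesis that $\G\cap[-1,1]$ is a finite union of segments and points is used here and in the Schwarz-reflection arguments of \cite{lac_euler} to keep these local uniformizations well defined.) Establishing this endpoint regularity carefully at the level of two derivatives is the heart of the proof; it is the only place where genuine work beyond \cite{lac_euler} is required.

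It then remains to propagate these facts through $T=G^{-1}\circ\Phi$ and $T^{-1}=\Phi^{-1}\circ G$ by the chain rule. Since $\Phi$ is continuous up to $\G$ with distinct side-values and $G^{-1}$ is continuous on $[-1,1]$, $T$ extends continuously up to $\G$ with distinct side-values; away from the endpoints $DT=DG^{-1}(\Phi)\,D\Phi$ and $D^2T=D^2G^{-1}(\Phi)(D\Phi)^2+DG^{-1}(\Phi)\,D^2\Phi$ extend continuously because both factors do. Near an endpoint $z_0$, $\Phi(z)-(\pm1)\sim a(z-z_0)$ with $a\neq0$ forces $|DT(z)|\sim C|z-z_0|^{-1/2}$ and, the first term dominating the second, $|D^2T(z)|\sim C'|z-z_0|^{-3/2}$, which are the asserted bounds (and give $D^2T\in L^p(\Pi\cap B(0,R))$ for $p<4/3$). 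Finally $T^{-1}=\Phi^{-1}\circ G$: here $G$ is smooth up to $S$ and $DG$ vanishes only, and simply, at $\pm1$, so $DT^{-1}=D(\Phi^{-1})(G)\,DG$ extends continuously up to $S$ (vanishing at $\pm1$) while $T^{-1}$ maps $S$ onto $\G$, yielding the first two bullets. Composing with the rotation that enforces $T'(\infty)\in\R^+_*$ completes the proof.
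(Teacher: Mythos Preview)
Your factorization $T=G^{-1}\circ\Phi$ composes in the opposite order from the paper's $T=F\circ T_2$, where $T_2:\Pi\to\tilde\Pi$ is the unfolding with $T_2^{-1}=G$ and $F:\tilde\Pi\to D^c$ is the Riemann map of the exterior of the closed Jordan curve $\tilde\G:=G^{-1}(\G)$. The paper explicitly warns against your route: ``The natural idea is to want to straighten the curve to the segment by a biholomorphism which would be $C^2$ up to the boundary\dots\ However, it is not well established that such a straightening up exists.'' Your $\Phi$ is precisely this straightening, and its endpoint $C^2$-regularity is the crux of the whole proof, not a step one can simply assert.

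The concrete gap is in your treatment of the slit tips. You propose to apply Kellogg--Warschawski to $(\Phi(z)-1)^{1/2}$, claiming the ``relevant boundary is the arc $\G$ unfolded once --- a $C^{2,\alpha}$ boundary''. But the domain of this map is still a neighbourhood of $z_0$ in $\Pi$, whose prime-end boundary is the \emph{doubled} arc: two copies of $\G$ meeting at $z_0$ with the same tangent, a cusp of interior angle $2\pi$. Kellogg--Warschawski requires a $C^{k,\alpha}$ Jordan-curve boundary and says nothing about maps out of a cusp; taking the square root on the target side straightens the image boundary but does nothing to the domain. To salvage your approach you would also have to unfold the domain (e.g.\ via a branch of $(z-z_0)^{1/2}$) and then verify, by explicit Taylor expansion, that the image of the doubled $\G$ under this unfolding is a $C^{2,\alpha}$ arc through the origin --- and this computation is exactly the content of the paper's proof, carried out on the other side of the factorization. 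The paper applies the Joukowski map first to turn the two-sided arc $\G$ into a genuine closed Jordan curve $\tilde\G$, and then spends the bulk of the argument on the Taylor expansions showing that $\tilde\G$ is $C^{2,1}$ at the gluing points $\pm1$; this is where the $C^3$ hypothesis on $\G$ is actually consumed. Once $\tilde\G$ is $C^{2,1}$, Kellogg--Warschawski applies cleanly to $F$, giving $F,F',F''$ continuous up to $\pd\tilde\Pi$, and all the singular behaviour of $DT$ and $D^2T$ then comes from the explicit Joukowski factor.
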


We only give here the properties near the curve, because the behavior at infinity is given by Remark \ref{2.5}.

\begin{proof}
Let us work in $\C$. We follow the proof made in \cite{lac_euler}.

{\bf First step: case where $\G:= [-1,1]$.}

In the special case of the segment $[-1,1]$, we have an explicit formula of $T$, thanks to the Joukowski function 
\[G(z)= \frac{1}{2} (z+\frac{1}{z}).\]
This function maps the exterior of the disk to the exterior of the segment, and we only have to solve an equation of degree two:
\begin{equation*}
 T(z) = z\pm \sqrt{z^2-1}
\end{equation*}
where you have to choose in a good way the sign $\pm$ (see \cite{lac_euler} for more details). Hence, we have
\begin{eqnarray*}
T'(z)&=& 1\pm \frac{z}{\sqrt{z^2-1}}=1\pm \frac{z}{\sqrt{(z-1)(z+1)}}\\
T''(z)&=& \mp \frac{1}{(z^2-1)^{3/2}}=\mp \Bigl(\frac{1}{(z-1)(z+1)}\Bigl)^{3/2},
\end{eqnarray*}
which allows us to finish the proof in this case.

\bigskip

{\bf Second step: general case.}

The natural idea is to want to straighten the curve to the segment by a biholomorphism which would be $C^2$ up to the boundary. Apply after the inverse of the Joukowski function would give the result. However, it is not well established that such a straightening up exists. Of course, we know how to straighten the curve to the segment by a biholomorphism, and how to straighten up by a $C^2$ function, but we do not know how to find an application which verify the two properties (see \cite{lac_thesis} for a discussion on this subject). 

The idea in \cite{lac_euler} is to apply first the inverse of the Joukowski function. Let assume\footnote{which is possible after homothety, translation and rotation.} that the end-points of $\G$ are $-1$ and $1$, we consider the curve 
\[\tilde \G := G^{-1}(\G) = (z+\sqrt{z^2-1})(\G) \cup (z-\sqrt{z^2-1})(\G).\]
It is proved that $\tilde \G$ is a $C^{1,1}$ Jordan curve. To gain estimate of one more derivative, the only thing to do is to show that $\tilde \G$ is a $C^{2,1}$ Jordan curve.

As it is said in \cite{lac_euler}, the difficult part is to show that $\tilde \G$ is $C^{2,1}$ at the points $-1$ and $1$, where we change the sign and where the square root is non smooth. Then, let us prove it at the point $-1$.

We denote  a parametrization of the curve $\G$ by $\G(t)$ (with $\G(0)=-1$, $\G(1)=1$), and $\g_1(t)= (z+\sqrt{z^2-1})(\G(t))$, $\g_2(t)= (z-\sqrt{z^2-1})(\G(t))$.

We write the Taylor expansion of $\G(t)= -1+at + O(t^2)$ with $a\in \C$. The aim is to compute the Taylor expansion of $\frac{\g_1'(t)}{|\g_1'(t)|}$ and of $\frac{\g_2'(t)}{|\g_2'(t)|}$.

For that, we compute
\begin{eqnarray*}
\g_1(t) &=& -1+\sqrt{-2a} \sqrt{t}+at+O(t\sqrt{t})\\
\g_2(t) &=& -1-\sqrt{-2a} \sqrt{t}+at+O(t\sqrt{t})
\end{eqnarray*}
hence,
\begin{eqnarray*}
\g'_1(t) &=& \frac{\sqrt{-2a}}{2}\frac{1}{\sqrt{t}} +a+O(\sqrt{t})\\
\g'_2(t) &=& -\frac{\sqrt{-2a}}{2}\frac{1}{\sqrt{t}} +a+O(\sqrt{t}).
\end{eqnarray*}
Writing that $\dfrac{1}{|f(t)|}= \Bigl(f(t)\overline{f(t)}\Bigl)^{-1/2}$, we obtain
\begin{eqnarray*}
\frac{1}{|\g_1'(t)|} &=& \sqrt{\frac2{|a|}} \sqrt{t}- \sqrt{\frac2{|a|^3}} {\rm Re}(\overline{a}\sqrt{-2a}) t + O(t\sqrt{t}) \\
\frac{1}{|\g_2'(t)|} &=& \sqrt{\frac2{|a|}} \sqrt{t}+ \sqrt{\frac2{|a|^3}} {\rm Re}(\overline{a}\sqrt{-2a}) t + O(t\sqrt{t})  ,
\end{eqnarray*}
which give
\begin{eqnarray*}
\frac{\g_1'(t)}{|\g_1'(t)|} &=&  \frac{\sqrt{-2a}}{|\sqrt{-2a}|}  + \Bigl(a\sqrt{\frac2{|a|}}-\frac{\sqrt{-2a}}{|\sqrt{-2a}|}\frac1{|a|} {\rm Re}(\overline{a}\sqrt{-2a})\Bigl) \sqrt{t} +O(t\sqrt{t})\\
\frac{\g_2'(t)}{|\g_2'(t)|} &=& -\frac{\sqrt{-2a}}{|\sqrt{-2a}|}  + \Bigl(a\sqrt{\frac2{|a|}}-\frac{\sqrt{-2a}}{|\sqrt{-2a}|}\frac1{|a|} {\rm Re}(\overline{a}\sqrt{-2a})\Bigl) \sqrt{t} +O(t\sqrt{t}).
\end{eqnarray*}
We denote $A=a\sqrt{\frac2{|a|}}-\frac{\sqrt{-2a}}{|\sqrt{-2a}|}\frac1{|a|} {\rm Re}(\overline{a}\sqrt{-2a})$.

Let $s_1$, respectively $s_2$, the arclength coordinates associated to $\g_1$, respectively $\g_2$.
The previous computation allows us to state that
\begin{eqnarray*}
\frac{d\g_1(s)}{ds}& =&\frac{\g_1'(t)}{|\g_1'(t)|} \to  \frac{\sqrt{-2a}}{|\sqrt{-2a}|} \\
\frac{d\g_2(s)}{ds}& =&\frac{\g_2'(t)}{|\g_2'(t)|} \to - \frac{\sqrt{-2a}}{|\sqrt{-2a}|},
\end{eqnarray*}
as $t\to 0$, which means that $\tilde \G$ is $C^1$.

Moreover,
\[\frac{d^2\g_i(s)}{ds^2}= \frac{d\Bigl(  \frac{d\g_i(s)}{ds}  \Bigl)}{dt} \frac{1}{|\g_i'(t)|}= \frac{d\Bigl(  \frac{\g_i'(t)}{|\g_1'(t)|}  \Bigl)}{dt} \frac{1}{|\g_i'(t)|},\]
which implies that 
\begin{eqnarray*}
\frac{d^2\g_1(s)}{ds^2}& =&\frac{A}2\frac1{\sqrt{t}}\sqrt{\frac2{|a|}} \sqrt{t}+O(\sqrt{t})  \to \frac{A}2 \sqrt{\frac2{|a|}}\\
\frac{d^2\g_2(s)}{ds^2}& =&\frac{A}2\frac1{\sqrt{t}}\sqrt{\frac2{|a|}} \sqrt{t}+O(\sqrt{t})  \to \frac{A}2 \sqrt{\frac2{|a|}},
\end{eqnarray*}
as $t\to 0$, which means that $\tilde \G$ is $C^2$.

In the same way, we have
\[\frac{d^3\g_i(s)}{ds^3}= \frac{d\Bigl(  \frac{d^2\g_i(s)}{ds^2}  \Bigl)}{dt} \frac{1}{|\g_i'(t)|}= O\Bigl(\frac1{\sqrt{t}}\Bigl)\sqrt{\frac2{|a|}} \sqrt{t}=O(1),\]
which implies that $\frac{d^2\g_i(s)}{ds^2}$ is lipschitz in a neighborhood of $-1$.

Therefore, we have proved that $\tilde \G$ is a $C^{2,1}$ Jordan curve. Now, we can conclude as in \cite{lac_euler}.

\bigskip

For sake of clarity, we rewrite here this argument. 

We denote by $\tilde \Pi$ the unbounded connected component of $\R^2\setminus\tilde\G$. Choosing well the $\pm$, we claim that we can construct $T_2$, a biholomorphism between $\Pi$ and $\tilde \Pi$, such that $T_2^{-1}=G$. 

Next, we just have to use the Riemann mapping theorem and we find a conformal mapping $F$ between $\tilde\Pi$ and $D^c$, such that $F(\infty)=\infty$ and $F'(\infty)\in \R^+_*$. Then $T:= F\circ T_2$ maps $\Pi$ to $D^c$ and $T(\infty)=\infty$, $T'(\infty)\in \R^+_*$. To finish the proof, we use the Kellogg-Warschawski theorem (see Theorem 3.6 of \cite{pomm-2}, which can be applied for the exterior problems), to observe that $F$, $F'$ and $F''$  have a continuous extension up to the boundary, because $\tilde \G$ is a $C^{2,1}$ Jordan curve . Therefore, the behavior near the curve of $DT$ and $D^2 T$ becomes from the behavior of $T_2$ which is the inverse of the Joukowski function. Then we find the same properties as in the segment case.  

The uniqueness of $T$ can be proved thanks to the uniqueness of the Riemann mapping from $D^c$ to $D^c$ (see Remark \ref{T-unique}).
\end{proof}

\section*{List of notations}

\subsection*{Domains:}\

$D:= B(0,1)$ the unit disk and $S:= \pd D$.

$\G$ is a Jordan arc (see Proposition \ref{2.2}) and $\G_\e:= \e \G$.

$\Pi_\e := \R^2\setminus \G_\e$.

$\OM_{n}$ is a bounded, open, connected, simply connected subset of the plane, where $\pd\OM_{n}$ is a $C^\infty$ Jordan curve.

$\Pi_n:= \R^2\setminus \overline{\OM_n}$.

\subsection*{Functions:}\

$\om_0$ is the initial vorticity ($C^\infty_c(\Pi)$).

$\g$ is the circulation of $u_0^{\e}$ around $\G_{\e}$ (see Introduction).

$(u^{\e},\om^{\e})$ is the solution of the Euler equations on $\Pi_{\e}$ in the sense of Definition \ref{sol-curve}.

$T$ is a biholomorphism between $\Pi$ and $\inte\ D^c$.

$T_n$ is a biholomorphism between $\Pi_n$ and $\inte\ D^c$.

$K_\e$ and $H_\e$ are given in Subsection \ref{sect:biot}.

$K_{\e}[\om^{\e}](x):= \int_{\Pi_{\e}} K_\e(x,y) \om^{\e}(y)dy$.

$\F^{\e}$ is a cutoff function for a ${\e}$-neighborhood of $\G_{\e}$.


\begin{thebibliography}{99}


\bibitem{dej} Desjardins B., {\it A few remarks on ordinary differential equations}, Commun. in Part. Diff. Eq. , 21:11, 1667-1703, 1996.

\bibitem{dip-li} DiPerna R. J. and Lions P. L., {\it Ordinary differential equations, transport theory and Sobolev spaces}, Invent. Math. 98, 511-547, 1989.

\bibitem{ift} Iftimie D., {\it Evolution de tourbillon \`a support compact}, Actes du Colloque de Saint-Jean-de-Monts, 1999.

\bibitem{ift_kell} Iftimie D. et Kelliher J., {\it Remarks on the vanishing obstacle limit for a 3D viscous incompressible fluid}, Proc. Amer. Math. Soc. 137 (2009), no. 2, 685-694.

\bibitem{ift_lop_euler} Iftimie D., Lopes Filho M.C. and Nussenzveig Lopes H.J., {\it Two Dimensional Incompressible Ideal Flow Around a Small Obstacle}, Comm. Partial Diff. Eqns. 28 (2003), no. 1$\&$2, 349-379.

\bibitem{ift_lop_NS} Iftimie D., Lopes Filho M.C. and Nussenzveig Lopes H.J., {\it Two Dimensional Incompressible Viscous Flow Around a Small Obstacle}, Math. Annalen. 336 (2006), 449-489.

\bibitem{kiku} Kikuchi K., {\it Exterior problem for the two-dimensional Euler equation}, J Fac Sci Univ Tokyo Sect 1A Math 1983; 30(1):63-92.

\bibitem{lac_euler} Lacave C., {\it Two Dimensional Incompressible Ideal Flow Around a Thin Obstacle Tending to a Curve}, Annales de l'IHP, Anl \textbf{26} (2009), 1121-1148.

\bibitem{lac_NS} Lacave C., {\it Two Dimensional Incompressible Viscous Flow Around a Thin Obstacle Tending to a Curve}, Proc. Roy. Soc. Edinburgh Sect. A, Vol. 139 (2009), No. 6, pp. 1138-1163.

\bibitem{lac_3D} Lacave C., {\it Fluide visqueux incompressible en dimension trois autour d'obstacles fins}, Actes du s\'eminaire GM3N à Caen, 2009. English translation in preparation.

\bibitem{lac_thesis} Lacave C., {\it Fluides autour d'obstacles minces}, thesis (2008), http://tel.archives-ouvertes.fr/tel-00345665/en/.

\bibitem{lac_lop} Lacave C., Lopes Filho M.C. and Nussenzveig Lopes H.J., {\it Homogenization of Two Dimensional Incompressible Ideal Flow in a Perforated Domain}, in preparation.

\bibitem{lac_miot} Lacave C. and Miot E., {\it Uniqueness for the vortex-wave system when the vorticity is constant near the point vortex}, SIAM Journal on Mathematical Analysis, Vol. 41 (2009), No. 3, pp. 1138-1163.


\bibitem{milton} Lopes Filho M.C., {\it Vortex dynamics in a two dimensional domain with holes and the small obstacle limit}, SIAM Journal on Mathematical Analysis, 39(2)(2007) : 422-436.

\bibitem{div-curl} Lopes Filho M.C., Nussenzveig Lopes H.J. and Tadmor E., {\it Approximate solutions of the incompressible Euler equations with no concentrations}, Annales de l'IHP, Anl \textbf{17} (2000), 371-412.

\bibitem{pomm-1} Pommerenke C., {\it Univalent functions}, Vandenhoeck $\&$ Ruprecht, 1975.

\bibitem{pomm-2} Pommerenke C., {\it Boundary behaviour of conformal maps}, Berlin New York: Springer-Verlag, 1992.

\bibitem{taylor} Taylor M., {\it Incompressible fluid flows on rough domains}. Semigroups of operators: theory and applications (Newport Beach, CA, 1998), 320-334, Progr. Nonlinear Differential Equations Appl., 42, Birkhäuser, Basel, 2000. 

\bibitem{temam} Temam R., {\it Navier-Stokes Equations, Theory and Numerical Analysis}, North-Holland, Amsterdam, 1979.

\bibitem{war} Warschawski S.E., {\it On the Distortion in Conformal Mapping of Variable Domains}, Trans. of the A.M.S., Vol. 82, No. 2 (Jul., 1956), pp. 300-322.

\end{thebibliography}
\end{document}